\documentclass{amsart}
\usepackage{hyperref,math,mathabx,mathtools,amsrefs,mathrsfs,bbm}
\usepackage[a4paper]{geometry}
\newcommand\K{{\mathbb K}}
\newcommand\A{{\mathscr A}}
\newcommand\AB{{\mathscr B}}
\newcommand\AJ{{\mathscr J}}
\newcommand\id{{\mathbbm 1}}
\numberwithin{equation}{section}

\newcommand\LieI{{\mathfrak I}}
\newcommand\LieJ{{\mathfrak J}}
\newcommand\LieK{{\mathfrak K}}
\newcommand\LieL{{\mathfrak L}}
\newcommand\LieM{{\mathfrak M}}
\newcommand\LieQ{{\mathfrak Q}}
\newcommand\LieW{{\mathfrak W}}

\newcommand\gr{\operatorname{gr}}
\newcommand\Der{\operatorname{Der}}

\newtheorem{mainthm}{Theorem}
\newtheorem{maincor}[mainthm]{Corollary}

\theoremstyle{definition}\newtheorem*{maindefn}{Definition}

\begin{document}
\title{Amenability of Lie, Group and Hopf algebras}
\author{Laurent Bartholdi}
\date{September 3, 2026}
\address{Institut Camille Jordan, Université de Lyon 1 \textit{and} Section de Mathématiques, Université de Genève}
\email{laurent.bartholdi@gmail.com}

\thanks{The author gratefully acknowledges support from the SNSF Advanced Grant TMAG-2\_216487/1.}
\begin{abstract}
  We propose to define amenability of a Lie algebra by the existence of almost-invariant finite-dimensional subcoalgebras of its universal enveloping algebra.

  More generally, a module coalgebra of a cocommutative Hopf algebra is amenable if it admits almost-invariant finite-dimensional subcoalgebras. We prove a coalgebraic rounding theorem: almost-invariant finite-dimensional subspaces can be replaced, without loss in the Følner constant, by almost-invariant finite-dimensional subcoalgebras. For Lie algebras this implies that our definition is equivalent to Elek's amenability of the left regular module of the universal enveloping algebra seen merely as an associative algebra. For groups this recovers the result that a group is amenable if and only if its group ring is algebraically amenable. It furthermore shows that, for every amenable group, all its nonzero modules are amenable, thus proving an assertion by Gromov.

  We prove that amenable Hopf algebras are closed under taking subalgebras, quotients, cleft extensions, and directed unions, and that every Hopf algebra locally of subexponential growth is amenable. We give examples of amenable Lie algebras which are not elementarily amenable. Finally, we show that amenability passes to the associated graded Hopf-module coalgebra.
\end{abstract}
\maketitle

\begin{maindefn}
  Let $\LieL$ be a Lie algebra, presumed infinite-dimensional. It is \emph{amenable} if for every finite-dimensional $F\le \LieL$ and every $\epsilon>0$ there exists a finite-dimensional subcoalgebra $E$ of the universal enveloping algebra $U(\LieL)$ with
  \pushQED{\qed}
\[\dim\frac{E+F E}{E}<\epsilon\dim E.\qedhere\]
\end{maindefn}
In other words, $U(\LieL)$ contains almost-invariant finite-dimensional subcoalgebras. One purpose of this article is to convince you, reader, that this is the ``right'' definition of amenability for Lie algebras, and that it has the expected permanence properties as well as natural analogies to amenability of groups.

It is useful to state our first results in the more general setting of Hopf-module coalgebras. Let $H$ be a cocommutative Hopf algebra, and let $M$ be a coalgebra which is a left $H$-module, such that the action $m\colon H\otimes M\to M$ is a coalgebra map; equivalently,
\[
\Delta_M\circ m=(m\otimes m)\circ\Delta_{H\otimes M},
\qquad
\varepsilon_M\circ m=\varepsilon_H\otimes\varepsilon_M.
\]
We call $M$ \emph{amenable} if for every finite-dimensional $F\le H$ and every $\epsilon>0$ there exists a finite-dimensional subcoalgebra $E\le M$ with
\[\dim\frac{E+F E}{E}<\epsilon\dim E.\]

Elek introduced in~\cite{elek:amenaa} a notion of amenability for associative algebras: for every finite-dimensional $F\le H$ and every $\epsilon>0$ there must exist a finite-dimensional subspace $E\le M$ with
\[\dim\frac{E+F E}{E}<\epsilon\dim E.\]
We refer to this notion as \emph{algebraic amenability} to distinguish it from the one we consider for Hopf algebras.

\begin{mainthm}[= Theorem~\ref{thm:module-coalgebra-rounding}]\label{thm:hopf}
  A Hopf-module coalgebra is amenable if and only if it is algebraically amenable, as an ordinary associative-algebra module.
\end{mainthm}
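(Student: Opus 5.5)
The easy direction is immediate: an amenable Hopf-module coalgebra is algebraically amenable, because a finite-dimensional subcoalgebra is in particular a finite-dimensional subspace. For the converse I will prove a rounding statement. Let $F\le H$ be finite-dimensional and let $E\le M$ be a finite-dimensional subspace with $\dim(E+FE)/E<\epsilon\dim E$. I will produce a finite-dimensional subcoalgebra $D\le M$ with $\dim(D+FD)/D\le\frac{\dim(E+FE)/E}{\dim E}\dim D$. First, enlarge $F$ to contain $1$ and to be a subcoalgebra of $H$; this is possible by the fundamental theorem of coalgebras, and it only makes the hypothesis on $E$ stronger. By the same theorem, $E$ and $FE$ lie in a finite-dimensional subcoalgebra $C\le M$. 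This reduces everything to linear algebra inside $C$, where I can dualize: subcoalgebras of $C$ correspond to two-sided ideals of the finite-dimensional algebra $C^*$.

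The heart of the argument is a ``layer-cake'' decomposition. This is the coalgebraic analogue of writing a function $f\ge0$ on a group as $\int_0^\infty \mathbbm 1_{\{f>t\}}\,dt$ and using the coarea formula to extract a good level set. Concretely, I will attach to $E$ a chain of subcoalgebras $0=D_0\le D_1\le\dots\le D_n\le C$ together with nonnegative weights $t_i$ satisfying two inequalities:
\[\dim E=\sum_i t_i\dim D_i,\qquad \dim\frac{E+FE}{E}\ \ge\ \sum_i t_i\dim\frac{D_i+FD_i}{D_i}.\]
Once these are established, pigeonholing gives an index $i$ with $\dim(D_i+FD_i)/D_i\le \frac{\dim(E+FE)/E}{\dim E}\dim D_i$, which is the required rounding with no loss in the Følner constant.

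My first attempt at constructing the chain uses the coefficient maps. For $\varphi\in C^*$ and $x\in C$, set $\varphi\rightharpoonup x=(\mathrm{id}\otimes\varphi)\Delta x$, and similarly define the right action $x\leftharpoonup\varphi$. The subcoalgebra generated by $E$ is $C^*\rightharpoonup E\leftharpoonup C^*$. I would take $D_i$ to be the subcoalgebras obtained from a composition series of $C^*$-bimodules, for instance by intersecting the coradical filtration of $C$ with the images of $E$. The weights $t_i$ would then be the dimension jumps of $E$ relative to the successive layers. Cocommutativity of $H$ and the fact that $m$ is a coalgebra map are used here. Together they give $\Delta(hx)=\sum h_{(1)}x_{(1)}\otimes h_{(2)}x_{(2)}$, from which $F\cdot(\varphi\rightharpoonup x)\subseteq \bigl(F^*\text{-twisted }\varphi\bigr)\rightharpoonup (Fx)$. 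Consequently the operation ``apply $F$'' is compatible with the layers up to replacing $F$ by the subcoalgebra it generates, which we arranged at the start.

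I expect the main obstacle to be the second inequality, namely subadditivity of the boundary term under this decomposition. The natural tool is submodularity of $\dim$ together with a filtration argument. I would pass to the associated graded pieces $D_i/D_{i-1}$ and show that $E\cap D_i+D_{i-1}$ induces a map from the boundary of $E$ onto the boundaries of the layers. If the coefficient-map chain does not give the inequality directly, my fallback is an averaging argument over a basis of $C^*$, replacing $E$ by $\sum_\varphi \varphi\rightharpoonup E$ one step at a time and checking at each step that the Følner ratio does not increase.
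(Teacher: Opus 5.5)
Your outer frame is the paper's own: replace $F$ by the cocommutative subcoalgebra $A=\Bbbk1+F$, find subcoalgebras $D_i$ with weights $t_i\ge0$ such that $\sum_it_i\dim D_i=\dim E$ and $\sum_it_i\dim(AD_i/D_i)\le\dim(AE/E)$, and pigeonhole. The easy direction is also fine. But the substance of the theorem is choosing the $D_i$ and proving the second inequality, and there the proposal has a genuine gap.

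\textbf{Why the proposed constructions fail.} Your plan to pass to $D_i/D_{i-1}$ and map the boundary of $E$ onto the boundaries of the layers cannot work. The $D_i$ are subcoalgebras, not $H$-submodules, so $F$ does not preserve $D_{i-1}$ and does not act on $D_i/D_{i-1}$. There is no induced boundary map. This device works in the paper's descent lemmas only because there the filtrations are by $F$-stable subspaces.

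The suggested chains (coradical filtration, composition series of $C^*$-bimodules, weights given by dimension jumps) do not even give the first identity. Nonnegative weights with $\sum_it_i\dim D_i=\dim E$ exist only if the layer densities $\dim\bigl((E\cap D_i)/(E\cap D_{i-1})\bigr)/\dim(D_i/D_{i-1})$ decrease. In $\Bbbk X$ the coradical filtration is trivial anyway.

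The fallback of enlarging $E$ by $\varphi\rightharpoonup E$ while the ratio does not increase is also doomed, because good subcoalgebras must sometimes be found by \emph{discarding} part of $E$. Take $\Bbbk\mathbb Z$ with $F=\langle1,s\rangle$ and $E=\langle\delta_0,\dots,\delta_{n-1},\delta_{1000}+\delta_{2000}\rangle$, where $2\le n<1000$. Then $\dim AE/\dim E=(n+3)/(n+1)$. Every $E+\varphi\rightharpoonup E$ is either $E$ itself or $\langle\delta_0,\dots,\delta_{n-1},\delta_{1000},\delta_{2000}\rangle$, which has the worse ratio $(n+5)/(n+2)$. The right answer, $\langle\delta_0,\dots,\delta_{n-1}\rangle$, lies inside $E$.

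\textbf{What is missing.} The missing idea is the paper's choice of chain together with its comparison argument.
\begin{itemize}
\item Take $C_t$ to be the largest subcoalgebra of a finite-dimensional subcoalgebra $G\supseteq E$ maximizing $\dim(E\cap C)-t\dim C$ over \emph{all} subcoalgebras $C\le G$. Supermodularity gives existence, and a coarea computation gives $\int_0^\infty\dim C_t\,dt=\dim E$. This is your first identity, with the right weights.
\item The second inequality is not proved layer by layer. It follows from the pointwise inclusion $AC_t(E)\le C_t(AE)$ together with the same coarea identity for $AE\le AG$.
\item That inclusion is the heart of the theorem. Maximality makes $(E\cap C_t,C_t)$ $t$-semistable against every subcoalgebra $B\le C_t$. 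The transfer proposition shows $(A(E\cap C_t),AC_t)$ is again $t$-semistable. A comparison lemma for rank functions with monotone difference then places $AC_t$ inside $C_t(AE)$.
\item Transfer is proved by splitting $A$ along a complete flag of subcoalgebras. Such a flag exists after a finite field extension because $A$ is cocommutative, and semistability survives the base change by tensor amplification.
\item Each codimension-one step $A'<A$ produces an injectively acting group-like $g$ with $\Delta(a)-a\otimes g\in A'\otimes A$ and $\Delta(a)-g\otimes a\in A\otimes A'$. This makes the kernel of $c\mapsto ac+(D+A'C)$ a subcoalgebra, so semistability applies to it, and the resulting estimates telescope.
\end{itemize}
Your twisted-coefficient identity only yields the qualitative fact that $A$ times a subcoalgebra is a subcoalgebra. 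Nothing in the proposal replaces this quantitative step.
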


\begin{mainthm}[= Theorem~\ref{thm:module-coalgebra-quotient}]\label{thm:quo}
  Every Hopf-module coalgebra quotient of an amenable module is amenable.
\end{mainthm}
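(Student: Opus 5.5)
The plan is to treat Theorem~\ref{thm:quo} as a consequence of Theorem~\ref{thm:hopf} and of the fact that quotient maps do not increase the relevant dimensions too much. Let $\pi\colon M\to N$ be a surjective map of Hopf-module coalgebras, meaning an $H$-linear coalgebra map, and suppose $M$ is amenable. By Theorem~\ref{thm:hopf} it is enough to show that $N$ is algebraically amenable; the rounding theorem then upgrades almost-invariant subspaces of $N$ to almost-invariant subcoalgebras.

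The naive attempt is to push forward a Følner subcoalgebra $E\le M$ to $\pi(E)$. This fails in general: if $E\cap\ker\pi$ is large, then $\pi(E)$ could be small and its boundary $\pi(FE)+\pi(E)$ modulo $\pi(E)$ need not be small relative to $\dim\pi(E)$. I would therefore use the standard device for quotients in Elek's setting. Set $K=\ker\pi$, which is an $H$-submodule and a coideal. I would not push $E$ forward. Instead I would compare $\dim E=\dim(E\cap K)+\dim\pi(E)$ and use the coalgebra structure to control $E\cap K$.

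The key point is that the kernel of a surjective coalgebra map, intersected with a finite-dimensional subcoalgebra $E$, is a coideal of $E$. Moreover, $\pi(E)$ is a nonzero subcoalgebra whenever $E\not\subseteq K$, and $\varepsilon_N\circ\pi=\varepsilon_M$ guarantees this as soon as $\varepsilon_M|_E\neq0$, which holds for every nonzero subcoalgebra. This nonvanishing alone does not give a proportion, though. For a proportion, I would use cocommutativity together with the coradical filtration. Write $E$ as a sum of its coradical filtration pieces, and use the fact that $\pi$ is injective on the part of $E$ that is not killed. More concretely, I would look for a Følner subcoalgebra $E$ of a special shape, for instance one with $\dim(E\cap K)\le(1-c)\dim E$ for a uniform $c>0$.

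The main obstacle is this uniform lower bound, and I am unsure how to get it directly. A cleaner route I would try first is an averaging argument. $M$ is algebraically amenable, and $N$ is a quotient module. By an Elek-type statement, a quotient of an algebraically amenable module over an amenable algebra is algebraically amenable, provided the quotient satisfies the Følner condition relative to $F$. To reach that, I would choose $E$ almost-invariant under a large $F'\supseteq F$. If $\dim\pi(E)$ is a small fraction of $\dim E$, I would replace $E$ by a translate-sum $E+F''E$ and iterate. This should use a pigeonhole on the chain $E\subseteq E+F E\subseteq E+F E+F^2E\subseteq\cdots$ with images in $N$. Since $\pi(E)\ne0$ and each image $\pi(F^kE)$ is finite-dimensional and nondecreasing while $\dim F^kE$ grows subexponentially along a Følner-type exhaustion, some stage $k$ has small relative boundary $\dim\pi(F^{k+1}E)/\pi(F^kE)<\epsilon\dim\pi(F^kE)$. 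That stage yields the algebraic Følner set in $N$, and Theorem~\ref{thm:hopf} finishes the proof.
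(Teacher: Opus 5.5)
Your proposal has a genuine gap: neither route ever produces a Følner subspace of $N$. The first route stops, as you acknowledge, at a uniform bound $\dim\pi(E)\ge c\dim E$. No such bound holds for arbitrary Følner subcoalgebras. For $\Bbbk\mathbb Z^2\to\Bbbk\mathbb Z$ induced by a coordinate projection, and $E$ spanned by the box $[0,n]\times[0,n^2]$, the ratio $\dim\pi(E)/\dim E$ tends to $0$. You also give no mechanism for selecting an $E$ of special shape.

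The second route rests on two false claims. First, quotients of algebraically amenable modules need not be algebraically amenable. Let $H$ act on $\Bbbk$ through $\varepsilon$. Then $\Bbbk\oplus H$ is algebraically amenable, because $\Bbbk\oplus0$ is invariant, while its quotient $H$ is not when $H=\Bbbk F_2$. With the evident coalgebra structures, the projection $\Bbbk\oplus H\to H$ is even an $H$-linear surjective comultiplicative map; it fails only to be counital. So any proof must use $\varepsilon_N\circ\pi=\varepsilon_M$ quantitatively, whereas you use it only to get $\pi(E)\neq0$. Second, $\dim F^kE$ need not grow subexponentially in $k$. If $G$ is amenable of exponential growth (e.g.\ the lamplighter group), with $M=\Bbbk G$, $F=\Bbbk S$ and $E=\Bbbk A$, then $\dim F^kE\ge\#S^k$. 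Suppose instead you run the pigeonhole over a bounded number $K$ of steps, using $\dim F^KE\le(1+O_K(\delta))\dim E$ for very almost-invariant $E$. Then failure at every stage only gives $(1+\epsilon)^K\dim\pi(E)\le(1+O_K(\delta))\dim E$. This is a contradiction only if $\dim\pi(E)\ge c\dim E$, which is the same missing bound.

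The missing idea is a layer-cake (coarea) decomposition of $E$ along $N$, in place of a pushforward. For a map of $G$-sets $\pi\colon\Omega\to\Omega'$ and finite $A\subseteq\Omega$, set $A^{(k)}=\{y:\#(A\cap\pi^{-1}(y))\ge k\}$. These satisfy $\sum_{k\ge1}\#A^{(k)}=\#A$ and $SA^{(k)}\subseteq(SA)^{(k)}$. Hence some nonempty $A^{(k)}$ has $\#SA^{(k)}/\#A^{(k)}\le\#SA/\#A$.

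The paper linearizes this as follows.
\begin{itemize}
\item Counitality makes the right coaction $\rho=(\id\otimes\pi)\Delta_M\colon M\to M\otimes N$ injective, since $(\id\otimes\varepsilon_N)\rho=\id$. Moreover $\rho$ is $H$-linear for the diagonal action.
\item For $Z=\rho(E)$ and $Z'=\rho(FE)$, take the largest maximizers $C_t(Z)\le N$ of $\dim(Z\cap(M\otimes V))-t\dim V$ over subspaces $V\le N$. Lemma~\ref{lem:density-coarea} gives $\int_0^\infty\dim C_t(Z)\,dt=\dim E$, and likewise for $Z'$.
\item The key step is $FC_t(Z)\le C_t(Z')$. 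It is proved one codimension-one step at a time along a complete subcoalgebra flag of $F\ni1$, each step producing a group-like element whose action is invertible.
\item A finite base change (Lemma~\ref{lem:coalgebra-splitting-field}) supplies the flag, and Lemma~\ref{lem:folner-descent} descends the conclusion back to $\Bbbk$.
\item Averaging over $t$ yields a nonzero $C_t(Z)$ with $\dim FC_t(Z)/\dim C_t(Z)\le\dim FE/\dim E$. Theorem~\ref{thm:hopf} then rounds it to a subcoalgebra.
\end{itemize}
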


The Hopf algebra $H$ itself is called \emph{amenable} if its regular module ${}_H H$ is amenable; equivalently,
\begin{maincor}
  The Hopf algebra $H$ is amenable if and only if every nonzero $H$-module coalgebra is amenable.
\end{maincor}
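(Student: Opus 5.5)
The reverse implication is immediate. The regular module ${}_H H$ is itself a nonzero $H$-module coalgebra: the multiplication $H\otimes H\to H$ is a coalgebra map because $H$ is a bialgebra. So if every nonzero $H$-module coalgebra is amenable, then in particular ${}_H H$ is amenable, which is the definition of $H$ being amenable.

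For the forward implication, suppose $H$ is amenable and let $M$ be a nonzero $H$-module coalgebra. The plan is to exhibit $M$, or a large enough piece of it, as a Hopf-module coalgebra quotient of a module built from $H$, and then apply Theorem~\ref{thm:quo}. Since $M\neq0$, the counit $\varepsilon_M$ is nonzero. By the fundamental theorem of coalgebras, $M$ contains a nonzero finite-dimensional subcoalgebra $C$.

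Consider the $H$-module coalgebra $H\otimes C$, with $H$ acting on the left factor and the tensor-product coalgebra structure. Because $H$ is cocommutative, the multiplication $H\otimes H\to H$ is a coalgebra map, so $H$ acts on $H\otimes C$ by coalgebra maps. The map
\[
\mu\colon H\otimes C\to M,\qquad h\otimes c\mapsto hc,
\]
is $H$-linear, and it is a coalgebra map by the defining property of module coalgebras. Its image $HC$ is an $H$-submodule subcoalgebra of $M$, hence a Hopf-module coalgebra quotient of $H\otimes C$.

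Next I show that $H\otimes C$ is amenable. Given $F$ and $\epsilon$, take a Følner subcoalgebra $E\le H$. Then $E\otimes C$ is a finite-dimensional subcoalgebra of $H\otimes C$, and
\[
\dim\frac{E\otimes C+F(E\otimes C)}{E\otimes C}=\dim C\cdot\dim\frac{E+FE}{E}<\epsilon\dim(E\otimes C).
\]
Theorem~\ref{thm:quo} then shows that $HC$ is amenable.

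It remains to pass from $HC$ to $M$. A finite-dimensional subcoalgebra $E'\le HC$ is also a subcoalgebra of $M$, and the Følner quotient $\dim\bigl((E'+FE')/E'\bigr)$ depends only on $E'$ and $F$, not on the ambient module. So the same sets $E'$ witness amenability of $M$, and $M$ is amenable.

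The main point needing care is checking that $\mu$ is a morphism of $H$-module coalgebras, so that Theorem~\ref{thm:quo} applies. This is exactly the compatibility $\Delta_M\circ m=(m\otimes m)\circ\Delta_{H\otimes M}$ restricted to $H\otimes C$, together with the corresponding counit identity. One should also confirm that the tensor product $H\otimes C$, with $H$ acting on one factor, is a module coalgebra in the sense of the paper; this again uses the module-coalgebra axioms for ${}_H H$.

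If Theorem~\ref{thm:quo} is stated only for surjections, this is no obstacle, since I apply it to the surjection $H\otimes C\to HC$. Alternatively, amenability of $H\otimes C$ could be avoided by using $H\otimes C\cong H^{\dim C}$ together with Theorem~\ref{thm:hopf}, reducing to algebraic amenability of a finite direct sum. The direct Følner computation above is simpler, though.
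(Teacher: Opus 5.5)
Your proof is correct and follows the paper's route: realize $M$ as a Hopf-module coalgebra quotient of a free $H$-module via the action map, then apply Theorem~\ref{thm:quo}. The differences are minor: the paper uses the surjection $H\otimes M\to M$ (onto because $1\cdot m=m$) and gets amenability of $H\otimes M$ from algebraic amenability plus Theorem~\ref{thm:hopf}, whereas you use $H\otimes C\to HC$, take the subcoalgebras $E\otimes C$ directly, and then pass from $HC$ to $M$ (also, multiplication $H\otimes H\to H$ is a coalgebra map in any bialgebra, so cocommutativity is not what is needed at that step).
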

\begin{proof}
  ``if'' is obvious, taking the module ${}_H H$. Conversely, for a nonzero $H$-module coalgebra $M$, give $N\coloneqq H\otimes M$ the $H$-action on the first factor. As an $H$-module it is a direct sum of $\dim(M)>0$ copies of $H$, so is algebraically amenable, and therefore amenable by Theorem~\ref{thm:hopf}. The action map $N\to M$, $h\otimes m\mapsto h\cdot m$, is an $H$-module surjective coalgebra map, so $M$ is amenable by Theorem~\ref{thm:quo}.
\end{proof}

A Hopf algebra $H$ is \emph{locally of subexponential growth} if for every finite-dimensional $F\le H$ the dimensions $\dim F^n$ grow subexponentially in $n$.
\begin{mainthm}[= Theorems~\ref{thm:hopf-subalgebra}, \ref{thm:union}, \ref{thm:subexp}]
  Every Hopf subalgebra of an amenable cocommutative Hopf algebra is amenable.

  Every directed union of amenable Hopf algebras is amenable.

  Every Hopf algebra locally of subexponential growth (for example, commutative) is amenable.  
\end{mainthm}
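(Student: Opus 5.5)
The strategy is to prove all three assertions through Theorem~\ref{thm:hopf}. It lets us replace ``amenable'' by ``algebraically amenable'' for the regular module ${}_HH$ and for related module coalgebras. The point is that it is enough to exhibit almost-invariant \emph{subspaces}. The coalgebraic rounding then supplies the subcoalgebras.

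\emph{Directed unions.} Let $H=\bigcup_i H_i$ be directed, with each $H_i$ amenable. Given a finite-dimensional $F\le H$ and $\epsilon>0$, choose $i$ with $F\le H_i$. Amenability of $H_i$ gives a finite-dimensional subcoalgebra $E\le H_i\le H$ with $\dim(E+FE)/E<\epsilon\dim E$. This is a subcoalgebra of $H$, so we are done directly, without even invoking Theorem~\ref{thm:hopf}.

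\emph{Subexponential growth.} Let $F\le H$ be finite-dimensional. Replace $F$ by $F'=F+\K\id$, which only makes the inequality harder. Set $a_n=\dim F'^n$. These are nondecreasing, since $\id\in F'$. By subexponential growth, $a_{n+1}/a_n\to 1$ along some subsequence: otherwise $a_{n+1}\ge(1+\epsilon)a_n$ for all large $n$, which forces exponential growth. For such $n$, take $E=F'^n$. Then
\[
\dim\frac{E+FE}{E}\le a_{n+1}-a_n<\epsilon a_n,
\]
so ${}_HH$ is algebraically amenable, and hence amenable by Theorem~\ref{thm:hopf}. (We could also take $F$ to be a subcoalgebra, in which case $F'^n$ is already a subcoalgebra, since products of subcoalgebras are subcoalgebras in a bialgebra. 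Every finite-dimensional subspace lies in a finite-dimensional subcoalgebra, so this reduction is harmless.) A commutative Hopf algebra is locally of subexponential growth, because $\dim F^n\le\binom{n+d}{d}$ for $d=\dim F$. The same bound holds for finitely generated commutative algebras, so the example follows.

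\emph{Hopf subalgebras.} This is the main obstacle. Let $K\le H$ be a Hopf subalgebra, and let $F\le K$ be finite-dimensional. Amenability of $H$ gives a finite-dimensional subspace $E\le H$ with $\dim(E+FE)/E<\epsilon\dim E$, and we must transfer this to $K$. Mimicking the group case, where $H$ is a disjoint union of cosets, I would use that for cocommutative $H$ the algebra $H$ is free as a left $K$-module. This is the Nichols--Zoeller/Takeuchi-type result: over a cocommutative Hopf algebra, $H$ is faithfully flat, and indeed free, over Hopf subalgebras. We also assume it is available in the paper's setting; this is the point to check first. Then ${}_KH\cong K^{(I)}$ as left $K$-modules. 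By Theorem~\ref{thm:hopf}, restricted to $K$, the Følner sets $E$ for $F\le K$ in $H$ show that the $K$-module $K^{(I)}$ is algebraically amenable.

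It remains to show that algebraic amenability of a direct sum $K^{(I)}$ implies that of $K$. Here the rounding theorem helps: $K^{(I)}\cong K\otimes V$ with $V$ a trivial coalgebra having basis $I$ of grouplike elements. Alternatively, argue as in the Corollary. Viewing $H$ as a $K$-module coalgebra, amenability of $H$ restricted to $K$ gives almost-invariant subcoalgebras of the $K$-module coalgebra $K\otimes V$. Then apply a quotient to ${}_KK$: the map $K\otimes V\to K$, $k\otimes v\mapsto k\,\varepsilon(v)$, is a surjective $K$-module coalgebra map, so Theorem~\ref{thm:quo} gives amenability of $K$. To avoid needing freeness, I would try this quotient route directly: find a surjective $K$-module coalgebra map $H\to K$. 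Such a map exists via a cleaving or retraction, which cocommutativity should provide (Takeuchi). If it does not, I would fall back on a pigeonhole argument on the free decomposition: some coordinate block of $E$ is at least as invariant as the average.
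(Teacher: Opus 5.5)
\paragraph{Unions and subexponential growth.} Your arguments here are the paper's, with one caveat in the growth part: the parenthetical is not optional. Theorem~\ref{thm:hopf} is proved only for cocommutative $H$, since the transfer step needs $\Bbbk1+F$ cocommutative. The statement, however, covers commutative Hopf algebras, which are in general not cocommutative. So the route ``algebraically amenable, hence amenable'' is unavailable there. You must take $F$ to be a finite-dimensional subcoalgebra containing $1$ and use the balls $F^n$, which are themselves subcoalgebras, exactly as the paper does.

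\paragraph{Hopf subalgebras: the gap.} The gap is at the point you flagged. Freeness of ${}_KH$ is not available in this generality. Nichols--Zoeller concerns finite-dimensional Hopf algebras. Radford's freeness theorem needs $H$ pointed, and a cocommutative Hopf algebra need not be pointed over a field that is not algebraically closed.

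Your alternatives do not avoid this. A surjective $K$-linear coalgebra map $H\to K$ is a cleftness-type retraction that you have not produced. Your final fallback again uses a free decomposition.

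What can be cited is Takeuchi's faithful flatness of $H$ over $K$. The Masuoka--Wigner theorem, which applies because $S^2=\id$ for cocommutative $H$, upgrades this to ${}_KH$ being a projective generator.

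\paragraph{The missing observation.} Projectivity suffices, because your pigeonhole only needs a $K$-linear embedding $\iota\colon H\hookrightarrow K^{(I)}$. The space $\iota(E+FE)$ lies in finitely many coordinates, say in $K^r$. With $P_j=Ke_1\oplus\cdots\oplus Ke_j$, put $V_j=\bigl((\iota E\cap P_j)+P_{j-1}\bigr)/P_{j-1}\le K$, and define $V_j^+$ likewise from $\iota(E+FE)$. Then:
\begin{itemize}
\item $V_j+FV_j\subseteq V_j^+$;
\item $\sum_j\dim V_j=\dim E$;
\item $\sum_j\dim V_j^+=\dim(E+FE)$.
\end{itemize}
So some $V_j\ne0$ has F\o{}lner ratio no worse than that of $E$. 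This is Lemma~\ref{lem:projective-module-descent}.

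Your rounding-plus-quotient route survives in the same way. A F\o{}lner subspace of $H$ is also a F\o{}lner subspace of the $K$-module $K^{(I)}\cong K\otimes\Bbbk I$. Hence Theorem~\ref{thm:hopf}, applied to the $K$-module coalgebra $K\otimes\Bbbk I$, followed by Theorem~\ref{thm:quo} for $k\otimes i\mapsto k$, gives amenability of $K$, as in the paper's Corollary. Note that the rounding must be done inside $K\otimes\Bbbk I$: a mere module isomorphism does not transport subcoalgebras of $H$.
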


\begin{mainthm}[= Theorem~\ref{thm:hopf-extension}]
  Let
  \[
    \Bbbk\longrightarrow A\longrightarrow B
    \overset{\pi}{\longrightarrow}C\longrightarrow\Bbbk
  \]
  be a \emph{cleft exact sequence} of cocommutative Hopf algebras, namely an exact sequence in which $\pi$ has a coalgebra section and $A=\{b\in B: (\id\otimes\pi)\Delta(b)=b\otimes 1\}$.

  Then $B$ is amenable if and only if $A$ and $C$ are amenable.
\end{mainthm}

The two main examples of cocommutative Hopf-module coalgebras are $H=\Bbbk G$ a group ring, with $M=\Bbbk X$ the linear span of a $G$-set, and $H=U(\LieL)$ a universal enveloping algebra, with $M=\operatorname{Sym}(\mathfrak X)$ for an $\LieL$-module $\mathfrak X$. In the first case, we recover the main result of~\cite{bartholdi:aa1} (see also~\cite{gromov:linear}*{\S3.6}): $\Bbbk X$ has almost-invariant finite-dimensional subspaces if and only if $X$ has almost-invariant finite subsets. For the regular module $M=U(\LieL)$, Theorem~\ref{thm:hopf}, together with the generator test proved below, shows that amenability of $\LieL$ may equivalently be defined using arbitrary finite-dimensional Følner subspaces of $U(\LieL)$.

In~\cite{gromov:linear}*{\S1.8, page~509}, Gromov writes that
``obviously every linear action of an amenable group is amenable''. This does not seem obvious: in a $G$-module $\Bbbk G/I$ it could theoretically happen that every Følner set in $G$ spans a very low-dimensional subspace modulo $I$, yet its boundary remains high-dimensional. Nevertheless, we prove Gromov's assertion, in the following strengthened form:
\begin{mainthm}[= Theorem~\ref{thm:quotients-of-permutation-modules}]
  Let $X$ be a left $G$-set and let $q\colon\Bbbk X\twoheadrightarrow V$ be a quotient of left $G$-modules. Suppose that $q(x)\neq0$ for some $x\in X$ whose orbit $G x$ is amenable. Then $V$ is amenable.

In particular, if $G$ is amenable, then every nonzero left $\Bbbk G$-module is amenable.
\end{mainthm}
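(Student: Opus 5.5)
The plan is to reduce to a transitive amenable $G$-set, encode $V$ as a Hopf-module coalgebra quotient, and then apply Theorem~\ref{thm:quo} together with Theorem~\ref{thm:hopf}; one step, excluding the trivial summand $\Bbbk1$, is not yet settled.

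\emph{Reduction to a single orbit.} If $W\le V$ is a submodule, any Følner subspace $E\le W$ is also a Følner subspace of $V$, because $E+FE\subseteq W$. So it suffices to treat $W=q(\Bbbk Gx)$. This $W$ is a nonzero quotient of $\Bbbk Gx$, since $q(x)\ne0$. We may therefore assume $X=Gx$ is a transitive amenable $G$-set. We may also assume $\dim V=\infty$: a finite-dimensional nonzero $V$ is amenable by taking $E=V$.

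\emph{Coalgebraic encoding.} The obstacle is that $V$ carries no coalgebra structure for which $q$ is a coalgebra map. I will use a "primitive" coalgebra instead. Let $N=\Bbbk1\oplus\Bbbk X$ with
\[\Delta(1)=1\otimes1,\qquad \Delta(y)=y\otimes1+1\otimes y,\qquad \varepsilon(y)=0\quad(y\in X),\]
and let $G$ fix $1$. Then $\Delta(gy)=(g\otimes g)\Delta(y)$, so $N$ is a $\Bbbk G$-module coalgebra. The same construction gives a module coalgebra $\Bbbk1\oplus V$. The map $\id\oplus q\colon N\to\Bbbk1\oplus V$ is a surjective module coalgebra map: every $i\in\ker q$ satisfies $\Delta(i)=i\otimes1+1\otimes i$ and $\varepsilon(i)=0$, so $\ker q$ is a coideal. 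Next, $N$ is algebraically amenable. Indeed, if $S\subset X$ is a finite Følner set, then $E=\Bbbk1\oplus\Bbbk S$ satisfies
\[\dim\frac{E+FE}{E}\le|F\text{-basis}|\cdot|\partial S|.\]
Hence $N$ is amenable by Theorem~\ref{thm:hopf}, and $\Bbbk1\oplus V$ is amenable by Theorem~\ref{thm:quo}.

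\emph{Removing the trivial summand.} Given a Følner subspace $E\le\Bbbk1\oplus V$, set $E'=E\cap V$. Then $\dim E'\ge\dim E-1$, and $(E'+FE')/E'$ injects, up to one extra dimension, into $(E+FE)/E$. So $E'$ is Følner in $V$ provided $\dim E\to\infty$.

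This last proviso is the main obstacle. The subspace $\Bbbk1$ is invariant, so the abstract conclusion of Theorem~\ref{thm:quo} could be witnessed by a trivial $E$. To exclude this, I will not use Theorem~\ref{thm:quo} as a black box. Instead I will run its proof, presumably pushing forward the subcoalgebra produced by the rounding theorem (Theorem~\ref{thm:hopf}), starting from Følner sets $k1\oplus\Bbbk S$ with $x\in S$ and $|S|\to\infty$. I will then track that the image $(\id\oplus q)(\Bbbk S)$ has dimension tending to infinity. The images $q(\Bbbk S)$ for $S$ ranging over Følner sets exhaust $V=q(\Bbbk Gx)$, because $X=Gx$ is transitive and exhausted by Følner sets. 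If the dimension of these images stayed bounded along a sequence, $V$ would be finite-dimensional, which the first step excluded.

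If tracking the proof of Theorem~\ref{thm:quo} fails, I would replace the first-order coalgebra $\Bbbk1\oplus V$ by a larger one without an invariant finite-dimensional piece. The candidate is the cofree-type coalgebra $\operatorname{Sym}$-like truncation $\bigoplus_{n\le k}V^{\otimes n}_{\mathrm{sym}}$. I would then extract the degree-one part of the Følner subcoalgebras, using the filtration to compare boundary dimensions.
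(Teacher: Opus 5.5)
There is a genuine gap, and it is exactly at the step you flag as the main obstacle; the coalgebraic encoding cannot close it. Your reduction to the single orbit $Gx$ is fine. The trouble starts with the encoding. Every nonzero subcoalgebra of $N=\Bbbk1\oplus\Bbbk X$, or of $\Bbbk1\oplus V$ (with $X$, resp.\ $V$, primitive), is $\Bbbk1\oplus P$ for an \emph{arbitrary} subspace $P$. So the coalgebra structure imposes no constraint, and it forgets the permutation basis $X$, which is the only place where amenability of $Gx$ can enter. Moreover $\Bbbk1$ is a $G$-invariant subcoalgebra, so $\Bbbk1\oplus V$ is amenable for every module of every group.

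Running the proof of Theorem~\ref{thm:module-coalgebra-quotient} does not rescue this. Take $E=\Bbbk1\oplus\Bbbk S$ and $F=\Bbbk T$ with $1\in T$, and $\rho$ as in that proof. Then $\dim\bigl(\rho(E)\cap(N\otimes V')\bigr)=0$ unless $1\in V'$. Otherwise, writing $V'=\Bbbk1\oplus V''$ with $V''\le V$, it equals $1+\dim\ker(q|_{\Bbbk S})+\dim\bigl(V''\cap q(\Bbbk S)\bigr)$. Hence $C_t=\Bbbk1\oplus q(\Bbbk S)$ for $t<1$, $C_t=\Bbbk1$ for $1<t<1+\dim\ker(q|_{\Bbbk S})$, and $C_t=0$ beyond. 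The witnessing $t$ may well lie in the middle range, where the ratio is $1$. The integral inequality only yields $\dim q(\Bbbk TS)-\dim q(\Bbbk S)\le\#(TS\setminus S)$, which is useless when $\dim q(\Bbbk S)\ll\#S$.

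Getting $\dim q(\Bbbk TS)\le(1+\epsilon')\dim q(\Bbbk S)$ is exactly the naive claim the theorem is about, and it fails for badly chosen F{\o}lner sets. For instance, let $G=X=\mathbb Z^2=\langle a,b\rangle$, $V=\Bbbk G/(a-1)\Bbbk G$ and $T=\{1,a,b\}$. Take the box $\{a^ib^j:0\le i,j\le N\}$ together with about $N^{3/2}$ points $b^j$, $j>N+1$, pairwise at distance at least $2$. This set is F{\o}lner, yet the ratio tends to $2$. That $\dim q(\Bbbk S)\to\infty$ is beside the point. The truncated $\operatorname{Sym}$ fallback has the same defect, since $1$ remains a $G$-fixed group-like element.

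The missing idea is to drop coalgebras and work with the finite configuration of vectors itself.

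1. Put $v=q(x)$. Then $Gv$ is an amenable $G$-set (a quotient of $Gx$) and $0\notin Gv$.
2. Take a F{\o}lner set $A\subseteq Gv$ with $\#(TA)<(1+\epsilon)\#A$, where $T\ni1$ is finite with $F\le\Bbbk T$.
3. On subspaces $L\le\langle TA\rangle$, use the point-counting rank $r_Z(L)=\#(Z\cap L)$ for $Z=A,TA$. It is monotone and supermodular.
4. By Lemma~\ref{lem:density-comparison}, for $t>0$ the largest density maximizers satisfy $C_t(sA)=sC_t(A)$, and $C_t(A)\le C_t(A')$ whenever $A\subseteq A'$. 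Hence $TC_t(A)\le C_t(TA)$.
5. The coarea identities $\int\dim C_t(A)\,dt=\#A$ and $\int\dim C_t(TA)\,dt=\#(TA)$ then give some $C_t(A)\ne0$ with $\dim TC_t(A)\le\frac{\#(TA)}{\#A}\dim C_t(A)$.

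This is Lemma~\ref{lem:finite-configuration-linearization}. The filtration selects subspaces densely spanned by points of $A$, which is exactly the information your rank function $\dim\bigl(\rho(E)\cap(N\otimes V')\bigr)$ cannot see.
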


Returning to Lie algebras, we prove:
\begin{mainthm}[= Theorems~\ref{thm:liesubexp}, \ref{thm:subalgebra}, \ref{thm:extension}, \ref{thm:lieunion}]
  Every Lie algebra locally of subexponential growth (e.g., Abelian) is amenable.
  
  Every subalgebra and quotient algebra of an amenable Lie algebra is amenable.

  Every extension and directed union of amenable Lie algebras is amenable.
\end{mainthm}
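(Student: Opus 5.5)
The plan is to reduce all four assertions to the Hopf-algebra results stated above, applied to $H=U(\LieL)$. By definition, $\LieL$ is amenable exactly when the regular module ${}_H H$ with $H=U(\LieL)$ is amenable as a Hopf-module coalgebra. Indeed, $U(\LieL)$ is cocommutative, and left multiplication $H\otimes H\to H$ is a coalgebra map.

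\emph{Subexponential growth.} Let $F\le U(\LieL)$ be finite-dimensional. Then $F$ lies in $U(\LieK)_{\le d}$ for some finite-dimensional subspace $\LieK\le\LieL$ and some degree $d$ of the PBW filtration. Hence $F^n\subseteq U(\LieK)_{\le nd}$, and in particular inside $\langle\LieK\rangle^{nd}$, where $\langle\LieK\rangle$ is the subalgebra of $U(\LieL)$ generated by $\LieK$. The Lie subalgebra generated by $\LieK$ has subexponential growth by hypothesis. I expect the growth of $\dim U(\LieM)_{\le n}$ to be governed by the growth of $\LieM$; more carefully, the dimension of the span of words of length $\le n$ in a basis of $\LieK$ inside $U$ is bounded by counting PBW monomials in commutators of length $\le n$. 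Hence $U(\LieL)$ is locally of subexponential growth, and the Hopf-algebra subexponential theorem applies.

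\emph{Subalgebras.} If $\LieM\le\LieL$, then $U(\LieM)$ is a Hopf subalgebra of $U(\LieL)$, so the Hopf-subalgebra statement applies directly.

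\emph{Quotients.} If $\LieI\trianglelefteq\LieL$, then $U(\LieL/\LieI)=U(\LieL)/U(\LieL)\LieI$. This is a quotient of the regular module by a left ideal that is also a coideal, so it is a Hopf-module coalgebra quotient of ${}_HH$, and Theorem~\ref{thm:quo} gives amenability. Its regular $U(\LieL/\LieI)$-module structure differs only by pulling back along the surjection. Since every finite-dimensional subspace of $U(\LieL/\LieI)$ lifts to a finite-dimensional $F\le U(\LieL)$, Følner subcoalgebras for lifted $F$ serve for $U(\LieL/\LieI)$.

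\emph{Extensions.} Given $0\to\LieI\to\LieL\to\LieQ\to0$, I will check that $\Bbbk\to U(\LieI)\to U(\LieL)\to U(\LieQ)\to\Bbbk$ is cleft exact. Choose a linear section $s\colon\LieQ\to\LieL$ and an ordered basis. The PBW map $U(\LieQ)\cong\operatorname{Sym}(\LieQ)\to U(\LieL)$, given by symmetrization composed with $s$, is a coalgebra section of $\pi$, because symmetrization $\operatorname{Sym}(\LieL)\to U(\LieL)$ is a coalgebra isomorphism in characteristic $0$. I must also verify that $U(\LieI)$ equals the $\pi$-coinvariants, which follows from the tensor decomposition $U(\LieL)\cong U(\LieI)\otimes\operatorname{Sym}(s\LieQ)$ as coalgebras. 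The extension theorem then gives amenability of $U(\LieL)$.

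\emph{Directed unions.} If $\LieL=\bigcup\LieL_i$ is directed, then $U(\LieL)=\bigcup U(\LieL_i)$ is a directed union of Hopf algebras, and the union theorem applies.

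The main obstacles are the characteristic assumptions and the growth comparison. In positive characteristic the symmetrization section fails. There I would instead use divided-power or PBW-monomial coalgebra sections, which work since $U(\LieL)$ is still a free $U(\LieI)$-module with a cleft structure by a result of Masuoka type, or I would restrict to characteristic zero as the paper presumably does. I also need to confirm that local subexponential growth of $\LieL$, measured by $\dim$ of iterated brackets, implies local subexponential growth of $U(\LieL)$. I would try the bound $\dim U(\LieM)_{\le n}\le\sum_{k\le n}p(k;\dim\LieM_{\le k})$, which is subexponential whenever $\dim\LieM_{\le k}$ is, by the standard partition-counting estimate.
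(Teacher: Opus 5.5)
The route is the same as the paper's: the PBW embedding for subalgebras, $U(\bigcup_i\LieL_i)=\bigcup_iU(\LieL_i)$ for unions, a cleft sequence for extensions, and Smith's theorem for growth. For quotients you apply Theorem~\ref{thm:quo} directly, which is exactly what the paper's cleft-extension theorem does internally. The gap is in your first sentence: $\LieL$ is \emph{not} amenable ``by definition'' exactly when ${}_HH$ is. The Lie definition only tests $F\le\LieL$. Every Hopf result you invoke, however, assumes Følner subcoalgebras for arbitrary finite-dimensional $F\le U(\LieL)$. For instance, the proof of Theorem~\ref{thm:hopf-extension} needs Følner sets in $U(\LieI)$ for an arbitrary defect space $D\le U(\LieI)$, not only for $\Bbbk1+F$ with $F\le\LieI$. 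Only the direction ``${}_HH$ amenable $\Rightarrow$ $\LieL$ amenable'' is trivial. That suffices for the growth part, but not for subalgebras, quotients, extensions or unions as you argue them. The missing step is the generator test, Proposition~\ref{prop:generator-test}. Given $P\le U(\LieL)$, choose $F\le\LieL$ and $d$ with $P\le V^d$, where $V=\Bbbk1+F$ and $m=\dim V$. Take a Følner subcoalgebra $E$ with $VE=E\oplus B$ and $\dim B<\delta\dim E$. Then $V^dE\subseteq E+B+VB+\cdots+V^{d-1}B$, so $\dim(E+PE)<(1+\delta(1+m+\cdots+m^{d-1}))\dim E$. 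This is easy, but it has to be said.

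Two smaller corrections. First, the statement is characteristic-free, and the paper does not restrict to characteristic $0$ here. You also need nothing of Masuoka type or divided powers. Sending an ordered monomial $q_1^{a_1}\cdots q_k^{a_k}$ in a basis of $\LieQ$ to $s(q_1)^{a_1}\cdots s(q_k)^{a_k}$ is a normalized coalgebra section in every characteristic. The reason is that for primitive elements $\Delta(q_1^{a_1}\cdots q_k^{a_k})=\sum_{b\le a}\prod_i\binom{a_i}{b_i}\,q^{b}\otimes q^{a-b}$ holds verbatim on both sides. This is the section the paper uses; your symmetrization is a basis-free variant that only works in characteristic $0$.

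Second, if you reprove Smith's theorem rather than cite it, the count must weight each basis vector by the commutator length at which it first appears. Take a basis adapted to $\LieM_{\le1}\le\LieM_{\le2}\le\cdots$ and straighten using $[\LieM_{\le a},\LieM_{\le b}]\subseteq\LieM_{\le a+b}$. Then the span of words of length $\le n$ has dimension bounded by partial sums of the coefficients of $\prod_j(1-z^j)^{-c_j}$, where $c_j=\dim\LieM_{\le j}-\dim\LieM_{\le j-1}$. This series has radius of convergence $1$ when $c_j$ grows subexponentially. If instead every part gets $\dim\LieM_{\le k}$ colours, as your $p(k;\dim\LieM_{\le k})$ can be read, the bound is already exponential for algebras of linear growth.
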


Following von Neumann~\cite{vneumann:masses} and Chou~\cite{chou:elementary}, we define \emph{elementarily amenable Lie algebras} as the smallest class \textsf{EL} of Lie algebras containing finite-dimensional and Abelian Lie algebras and closed under taking subalgebras, quotients, extensions, and directed unions; and \emph{subexponentially amenable Lie algebras} as the smallest class \textsf{SL} of Lie algebras containing Lie algebras of subexponential growth and closed under the same operations. Thus
\[
\textsf{EL}\subseteq\textsf{SL}\subseteq\textsf{AL},
\]
where \textsf{AL} denotes the class of amenable Lie algebras. Since there are simple Lie algebras of subexponential growth, we obtain:
\begin{mainthm}[= Theorems~\ref{thm:amennotelem1}, \ref{thm:amennotelem2}]
  The classes \textsf{EL} and \textsf{SL} are distinct.
\end{mainthm}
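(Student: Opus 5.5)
The statement is that $\textsf{EL}\subsetneq\textsf{SL}$. Since $\textsf{EL}\subseteq\textsf{SL}$ by definition, I need one Lie algebra in $\textsf{SL}$ that is not in $\textsf{EL}$. The natural candidate is an infinite-dimensional simple Lie algebra of subexponential growth, for example the Witt algebra $\LieW=\operatorname{Der}\Bbbk[t]$ or its two-sided version $\Der\Bbbk[t,t^{-1}]$ in characteristic zero. It has linear growth and lies in $\textsf{SL}$ by definition, and it is amenable by Theorem~\ref{thm:liesubexp}.

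To show $\LieW\notin\textsf{EL}$, I will run a transfinite induction in the style of Chou. I write $\textsf{EL}=\bigcup_\alpha \textsf{EL}_\alpha$, where $\textsf{EL}_0$ consists of the finite-dimensional and the Abelian Lie algebras. At successor stages I close under extensions and directed unions, and at limit stages I take unions. As for groups, I first check that subalgebras and quotients are absorbed at each level, so that every $\textsf{EL}_\alpha$ is itself closed under subalgebras and quotients. I then prove by induction on $\alpha$ the following invariant: every \emph{finitely generated} infinite-dimensional simple Lie algebra $L$ lies outside $\textsf{EL}_\alpha$.

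The base case is immediate, since $L$ is neither finite-dimensional nor Abelian. For the extension step, suppose $0\to I\to L\to Q\to 0$ with $I,Q$ in a lower level. Simplicity forces $I=0$ or $I=L$, so $L$ equals $I$ or $Q$, both in a lower level, which contradicts the induction hypothesis. For the directed-union step, write $L=\bigcup L_i$. Because $L$ is finitely generated, some $L_i$ already contains the generators, so $L_i=L$, again a contradiction.

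Finite generation matters here, so I will use a finitely generated simple example. The positive-and-negative Witt algebra $\Der\Bbbk[t,t^{-1}]$ is simple in characteristic $0$. It is generated by $e_{-2},e_{-1},e_1,e_2$, since $[e_i,e_j]=(j-i)e_{i+j}$ produces every $e_n$. Its growth is linear, because with respect to a finite generating set the span of brackets of length $\le n$ lies in $\langle e_k:|k|\le 2n\rangle$. This gives subexponential growth of the universal enveloping algebra as well.

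The main obstacle is the bookkeeping in the hierarchy: ensuring that the closure under subalgebras and quotients does not raise the level. If that is awkward, I will instead define $\textsf{EL}_\alpha$ to be closed under all four operations at each successor step. The simple-and-finitely-generated invariant still propagates, because a subalgebra or quotient that equals $L$ forces $L$ into the lower class, while a proper one is irrelevant. In positive characteristic I would substitute another simple finitely generated algebra of subexponential growth, or restrict the theorem to characteristic $0$ as needed.
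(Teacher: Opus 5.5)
\textbf{Assessment.} Your characteristic-zero argument is sound and is the paper's own (Corollary~\ref{thm:amennotelem1}). It uses the Witt algebra on generators $e_{\pm1},e_{\pm2}$, linear growth, simplicity, and a minimal-ordinal argument. That argument needs each $\textsf{EL}_\alpha$ to be closed under subalgebras and quotients, so that $\bigcup_\alpha\textsf{EL}_\alpha$ really is $\textsf{EL}$.

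\textbf{The gap: positive characteristic.} The statement is asserted over an arbitrary field, and the paper devotes Theorem~\ref{thm:amennotelem2} to $\operatorname{char}\Bbbk=p$. ``Restricting the theorem to characteristic $0$'' therefore does not prove it, and your example does not survive there. In characteristic $p$ every derivation kills $t^p$, so $(t^p-1)\Der\Bbbk[t,t^{-1}]$, spanned by the $e_{n+p}-e_n$, is a proper nonzero ideal; likewise $t^p\Der\Bbbk[t]$ is one. You name no replacement finitely generated, infinite-dimensional, simple algebra of subexponential growth.

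The paper's example is the Petrogradsky--Shestakov--Zelmanov algebra $\LieL=\langle d,v\rangle$. It has finite Gelfand--Kirillov dimension but is \emph{not} simple, so your invariant cannot be applied to it. The missing idea is a substitute for simplicity. Take $0\to\LieJ\to\LieL\to\LieQ\to0$ with $\LieJ\neq0$.
\begin{itemize}
\item Regular branching gives $X^{\otimes n}\otimes[\LieK,\LieK]\subseteq\phi^n(\LieJ)$, and $\iota^m(\LieK)\subseteq[\LieK,\LieK]$ for large $m$. Together these yield a subalgebra of $\LieJ$ isomorphic to the branching ideal $\LieK$.
\item Recurrence shows that $\LieL$ is a subquotient of $\LieK$: with $\Sigma(\LieI)=s(\LieI\cap\mathfrak D)$, one has $\Sigma^3(\LieK)=\LieL$.
\end{itemize}
Closure of $\textsf{EL}_\beta$ under subalgebras and quotients then puts $\LieL$ in $\textsf{EL}_\beta$, contradicting minimality. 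Without such an example and argument, your proof establishes $\textsf{EL}\neq\textsf{SL}$ only when $\operatorname{char}\Bbbk=0$.

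\textbf{A secondary flaw in your fallback.} Building subalgebra and quotient closure into each successor step does not propagate your invariant as you claim. Suppose $L$ enters $\textsf{EL}_{\alpha+1}$ as a subalgebra or quotient of some $M\in\textsf{EL}_\alpha$ with $M\neq L$. Then $M$ need not be simple, and your induction hypothesis says nothing about it. That route would need a stronger invariant: no member of $\textsf{EL}_\alpha$ has a finitely generated, infinite-dimensional, simple subquotient. That invariant does pass through extensions and directed unions. Your primary route, proving the level-wise closure lemma first, is the right one.
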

We do not know, at present, whether \textsf{SL} and \textsf{AL} are distinct, as is the case for groups~\cite{bartholdi-v:amenability}, but we can at least show (in a surprisingly difficult manner; Be'eri Greenfeld supplied us with a simpler construction based on~\cite{bell-greenfeld:monomial}):
\begin{mainthm}[= Theorem~\ref{thm:expogrowth}]
  There exists an amenable Lie algebra of exponential growth. It is even (locally finite-dimensional)-by-($1$-dimensional).
\end{mainthm}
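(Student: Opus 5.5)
We realize the Lie algebra as an extension $\LieL=\LieK\rtimes\Bbbk t$, with $\LieK$ locally finite-dimensional, and check amenability and exponential growth separately. Amenability is then formal. A locally finite-dimensional Lie algebra is a directed union of finite-dimensional subalgebras, and each finite-dimensional Lie algebra is amenable, since its enveloping algebra has polynomial growth and we can apply Theorem~\ref{thm:liesubexp}. Hence $\LieK$ is amenable by Theorem~\ref{thm:lieunion}. The quotient $\Bbbk t$ is Abelian, hence amenable, so $\LieL$ is amenable by Theorem~\ref{thm:extension}. This also places $\LieL$ in $\textsf{EL}$. All the content is therefore in producing exponential growth.

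For the construction, take $\LieK$ to be a locally nilpotent (hence locally finite-dimensional) Lie algebra with a basis $\{e_i\}_{i\in I}$, and let $t$ act by a derivation $D$. The growth of $\LieL$ with respect to $F=\Bbbk t+\Bbbk e_0$ is governed by the subspaces $V_n$ spanned by all brackets of length $\le n$ in $t$ and $e_0$. The first candidate is a wreath-type example. Let $\LieK=\bigoplus_{n\in\mathbb Z}\LieQ_n$, where each $\LieQ_n$ is a copy of a fixed nilpotent Lie algebra $\LieQ$ sitting inside a free-nilpotent-like object, and let $D$ act as a shift combined with a twist. The intent is that iterated brackets $[D^{a_1}e_0,[D^{a_2}e_0,\dots]]$ with $a_i\le n$ produce exponentially many independent elements, while any finitely many $e_i$ still generate a finite-dimensional algebra.

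Exponential growth and local finite-dimensionality pull against each other. If $\LieK$ is an abelian or restricted-wreath-type direct sum, as in $\LieK=\Bbbk[x]$ with $D=\partial$, the growth is polynomial. To get exponential growth, brackets of length $n$ must be supported on distinct new basis vectors whose number is at least $c^n$, yet every finitely generated subalgebra must be nilpotent of bounded size. So I would instead take $\LieK$ to be the locally nilpotent Lie algebra of strictly upper-triangular-like derivations, for instance a subalgebra of $\Der\Bbbk[x_1,x_2,\dots]$ spanned by monomial derivations $x^\alpha\partial_{x_i}$ whose monomials involve only variables $x_j$ with $j<i$. Such algebras are locally nilpotent by a triangularity argument. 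For $D$ I would take the shift $x_i\mapsto x_{i+1}$. Then $D^k e_0$ gives derivations in all variables, and brackets among them compose into monomials whose number grows exponentially in word length. This suggests a Grigorchuk-style encoding: length-$n$ words in the generators yield monomials $x^\alpha\partial_{x_i}$ with $|\alpha|$ and $i$ controlled linearly by $n$ but with exponentially many admissible exponent patterns.

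The main obstacle is the lower bound on growth, namely exhibiting $\ge c^n$ linearly independent commutators in $V_n$. I would handle it with a monomial ordering argument. I would show that distinct binary bracket trees, chosen from a suitable family such as right-normed brackets $[D^{a_1}e_0,\dots,D^{a_k}e_0]$ with a lacunary increasing sequence $a_1<\dots<a_k\le n$, have distinct leading monomials. Their number is then $\ge 2^{\delta n}$ for some $\delta>0$, giving exponential growth. I would then check local finite-dimensionality of $\LieK$ explicitly from the triangular grading.
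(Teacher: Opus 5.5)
\textbf{There is a genuine gap: the construction of the example.} Your amenability argument is correct and matches the paper's: a locally finite-dimensional $\LieK$ is a directed union of finite-dimensional algebras, so Theorems~\ref{thm:lieunion} and~\ref{thm:extension} apply. The construction, however, is the entire content, and as proposed it cannot give exponential growth.

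First, the shift $x_i\mapsto x_{i+1}$ induces an \emph{endomorphism} of $\LieK$, not a derivation. So it yields no Lie extension $\LieK\rtimes\Bbbk t$; you are transplanting the group picture $\LieK\rtimes\mathbb Z$. Among derivations, variables and partials move contragrediently, and neither natural candidate works:
\begin{itemize}
\item The derivation $\sum_i x_{i+1}\partial_{x_i}$ raises variable indices but sends $\partial_{x_k}\mapsto-\partial_{x_{k-1}}$. It does not normalize $\LieK$: its bracket with $x_1\partial_{x_2}$ is $x_2\partial_{x_2}-x_1\partial_{x_1}$.
\item The derivation $\eta=\sum_i x_i\partial_{x_{i+1}}$ does normalize $\LieK$, since $[\eta,f\partial_{x_k}]=\eta(f)\partial_{x_k}-f\partial_{x_{k+1}}$. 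But it never raises variable indices.
\end{itemize}
So suppose $e_0$ involves only $x_1,\dots,x_N$ and $\partial_{x_k}$ with $k\le N'$. Then the ball of radius $n$ in $\langle\eta,e_0\rangle$ lies in the span of $\eta$ and of monomials $x^\alpha\partial_{x_k}$ with $\alpha\in\mathbb N^N$, $k\le N'+n$ and $|\alpha|\le cn+1$. The degree bound holds because $|\alpha|-1$ is additive under brackets and $\eta$ has degree $0$. There are only $O(n^{N+1})$ such monomials, so the growth is polynomial. Equivalently, $\langle\eta,e_0\rangle$ is solvable. Indeed, let $\mathfrak u_N$ be the triangular derivations in $x_1,\dots,x_N$ and let $\mathfrak a=\bigoplus_{k>N}\Bbbk[x_1,\dots,x_N]\partial_{x_k}$, an abelian ideal. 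Then $\langle\eta,e_0\rangle$ lies in $\Bbbk\eta\ltimes(\mathfrak u_N\ltimes\mathfrak a)$. This is exactly the obstruction recalled in the introduction.

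Your lower-bound count also fails on its own terms. The bracket $[D^{a_1}e_0,\dots,D^{a_k}e_0]$ has length $\sum_j(a_j+1)$ in $t,e_0$. With $a_1<\dots<a_k$, the brackets in the ball of radius $n$ are therefore indexed by partitions of integers $\le n$ into distinct parts, of which there are only $e^{O(\sqrt n)}$. If you only require $a_k\le n$, your $2^{\delta n}$ elements sit at radius of order $n^2$, which again gives only $e^{O(\sqrt R)}$ at radius $R$. Exponential growth needs exponentially many \emph{compositions}, with repetitions and arbitrary orders, to stay linearly independent.

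The missing idea, supplied by the paper, is to import the growth from associative multiplication.
\begin{itemize}
\item Take the locally matrix algebra $\A=\bigcup_r\A_r$ with $\A_{r+1}=\operatorname{End}(\A_r)=\A_rp_r\A_r$. There a single new element $p_r$, of weight $r+4$, squares the dimension.
\item The profile $b_0,b_1,\dots$ is read off by the words $CS^{i_1}C\cdots S^{i_m}C$ in $\AB=\Bbbk\langle S,C\rangle$. The ideal $\AJ=(C)$ is locally finite, with $\AB/\AJ\cong\Bbbk[s]$.
\item The matrix units $t=E_{22}(S)$, $x=E_{12}(C)$, $y=E_{23}(C)$, $z=E_{32}(C)$ convert each such associative word into the Lie commutator $E_{12}(CS^{i_1}C\cdots S^{i_m}C)$ of comparable length. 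The kernel then lies inside the locally finite Lie algebra $M_3(\AJ)^{(-)}$.
\end{itemize}
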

In comparison, it is easy to produce amenable groups of exponential growth, for example solvable groups that are not virtually nilpotent. However, solvable Lie algebras have subexponential growth~\cites{lichtman:growth,smith:subexponential}, and the wreath product of an Abelian Lie algebra with an algebra of subexponential growth again has subexponential growth, see~\cite{petrogradsky:growth2} for detailed estimates.

We finally consider amenability under passage to an associated graded object, and prove the following easy result:
\begin{mainthm}[= Theorem~\ref{thm:graded}]\label{mainthm:graded}
  Let $M$ be a Hopf-module coalgebra with its augmentation filtration. If $M$ is amenable, then its associated graded Hopf-module coalgebra $\gr M$ is amenable.
\end{mainthm}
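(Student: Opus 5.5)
The plan is to reduce to algebraic amenability via Theorem~\ref{thm:hopf}, so that it suffices to produce almost-invariant finite-dimensional \emph{subspaces} of $\gr M$, and to obtain them by passing Følner subcoalgebras of $M$ to their associated graded subspaces. I will take the augmentation filtration to be an increasing, exhaustive filtration $M_0\le M_1\le\cdots$ of $M$, together with the corresponding filtration $H_0\le H_1\le\cdots$ of $H$, satisfying the compatibility $H_iM_j\subseteq M_{i+j}$. This compatibility, checked directly from the definition of the augmentation filtration, is what makes $\gr M=\bigoplus_n M_n/M_{n-1}$ a $\gr H$-module coalgebra. For any finite-dimensional subspace $W\le M$, I put
\[\gr W\coloneqq\bigoplus_n (W\cap M_n+M_{n-1})/M_{n-1}\le\gr M.\]
This has two elementary properties: $\dim\gr W=\dim W$, since the filtration is exhaustive and starts at $M_0$ with $M_{-1}=0$; and $W\le W'$ implies $\gr W\le\gr W'$, with $\dim\gr W'/\gr W=\dim W'/W$.

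Now fix a finite-dimensional $F\le\gr H$ and $\epsilon>0$. Replacing $F$ by the span of the homogeneous components of its elements (still finite-dimensional, and only larger), I may assume $F$ is spanned by homogeneous elements $\sigma(f_1),\dots,\sigma(f_k)$, with $f_a\in H_{i_a}$. Let $F'\le H$ be the span of the lifts $f_a$. By amenability of $M$ there is a finite-dimensional subcoalgebra $E\le M$ with $\dim(E+F'E)/E<\epsilon\dim E$.

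The key claim is
\[\gr E+F\,\gr E\subseteq\gr(E+F'E).\]
A homogeneous element of $\gr E$ in degree $j$ is the symbol $\sigma(e)$ of some $e\in E\cap M_j$. Then $f_ae\in M_{i_a+j}$, and $\sigma(f_a)\sigma(e)$ is by definition the class of $f_ae$ in $M_{i_a+j}/M_{i_a+j-1}$. Since $f_ae\in (E+F'E)\cap M_{i_a+j}$, this class lies in $\gr(E+F'E)$; if the class is zero there is nothing to check. By linearity the inclusion follows, and hence
\[\dim\frac{\gr E+F\gr E}{\gr E}\le\dim\gr(E+F'E)-\dim\gr E=\dim\frac{E+F'E}{E}<\epsilon\dim\gr E.\]
So $\gr M$ is algebraically amenable, and Theorem~\ref{thm:hopf} gives amenability. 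In fact $\gr E$ is already a subcoalgebra when the filtration is a coalgebra filtration, but this is not needed.

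The main obstacle I anticipate is not the counting but the setup: verifying that the augmentation filtration really satisfies $H_iM_j\subseteq M_{i+j}$, and that it is exhaustive with $M_{-1}=0$, so that $\dim\gr W=\dim W$. I would check this first from the definition. If exhaustiveness failed, I would restrict to the union of the $M_n$, which is the part that $\gr M$ sees.
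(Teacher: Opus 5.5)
There is a genuine gap: you run the paper's argument for the separated case (Lemma~\ref{lem:graded-separated}), but you set the filtration up in the wrong direction, which hides the one real issue.

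The augmentation filtration is \emph{decreasing}: $M_n=\varpi^nM$ with $\varpi=\ker\varepsilon_H$, and $\gr M=\bigoplus_nM_n/M_{n+1}$. Compatibility ($\varpi^i\varpi^jM=\varpi^{i+j}M$) and exhaustiveness ($M_0=M$) are automatic. What can fail is \emph{separatedness}: $M_\infty=\bigcap_n\varpi^nM$ may be nonzero. For example, for a perfect group $P$ one has $\varpi_P/\varpi_P^2\cong P_{\mathrm{ab}}\otimes\Bbbk=0$, so $\varpi_P=\varpi_P^n$ for all $n\ge1$. For finite-dimensional $W\le M$ one only gets $\dim\gr W=\dim W-\dim(W\cap M_\infty)$. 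Your inclusion $\gr E+F\gr E\subseteq\gr(E+F'E)$ survives, and your equality $\dim\gr(E+F'E)-\dim\gr E=\dim\bigl((E+F'E)/E\bigr)$ weakens harmlessly to $\le$. But the final step silently uses $\dim E=\dim\gr E$, and it fails even for honest Følner subcoalgebras.

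Here is a concrete case. Take $H=M=\Bbbk G$ with $G=\mathbb Z\times P$, $P$ a nontrivial finite perfect group and $t$ a generator of $\mathbb Z$. Let $F$ be spanned by $1$ and the symbol of $t-1$, with lifts $F'=\Bbbk1+\Bbbk(t-1)$, and let $E=\Bbbk\{t^kp:1\le k\le N,\ p\in P\}$. Then $\dim\bigl((E+F'E)/E\bigr)=|P|<\epsilon\dim E$ as soon as $N>1/\epsilon$. However, the $N(|P|-1)$ independent elements $t^k(p-1)$ lie in $\Bbbk G\,\varpi_P\subseteq M_\infty$, so $\dim\gr E\le N$. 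Your last inequality would then require $|P|<\epsilon N$, which fails for $1/\epsilon<N\le|P|/\epsilon$. A nonzero subcoalgebra cannot lie inside $M_\infty$, since $\varepsilon(M_\infty)=0$. It can, however, have almost all of its dimension there, and then the Følner estimate in $M$ says nothing about $\gr E$.

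The missing idea is to divide out $M_\infty$ before taking symbols.
\begin{itemize}
\item $M_\infty$ is an $H$-submodule with $\varepsilon(M_\infty)=0$, because $M_\infty\subseteq\varpi M$.
\item It is a coideal: $\Delta(M_\infty)\subseteq\bigcap_n\sum_{i+j=n}M_i\otimes M_j$, and this equals $M_\infty\otimes M+M\otimes M_\infty$ by Lemma~\ref{lem:tensor-separated-quotient}.
\item Hence $M/M_\infty$ is an $H$-module coalgebra quotient of $M$, and it is amenable by Theorem~\ref{thm:module-coalgebra-quotient}.
\item Its filtration $M_n/M_\infty$ is separated and $\gr(M/M_\infty)\cong\gr M$, so your argument then applies verbatim with $\dim\gr E=\dim E$.
\end{itemize}

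The quotient theorem is genuinely needed here. Pushing Følner subspaces forward to a quotient module loses control of the denominator exactly as in the example above, and it is the coalgebra structure that repairs this. For the same reason, your fallback of restricting to a sub-object points the wrong way. The defect is a submodule that must be quotiented out, and amenability does not pass to quotient modules for free.
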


One motivation comes from questions of Vershik concerning the growth of the lower central and dimension series of an amenable group~\cite{vershik:amenability}*{page 326}, see also~\cite{bartholdi-g:lie}. In a more algebraic form, the question asks whether, for the augmentation ideal $\varpi\triangleleft\Bbbk G$, the dimensions of the quotients $\Bbbk G/\varpi^n$ grow subexponentially. Now $\Bbbk G$ is amenable, so Theorem~\ref{mainthm:graded} shows that its associated graded Hopf algebra $\gr{\Bbbk G}=\bigoplus_{n\ge0}\varpi^n/\varpi^{n+1}$ is amenable.

For simplicity, we assume in the next paragraph that $\Bbbk$ has characteristic $0$. If $\Bbbk$ had positive characteristic, the statements and constructions of this article should be adapted to restricted Lie algebras and their restricted universal enveloping algebra.

Quillen's theorem~\cite{quillen:ab} identifies $\gr{\Bbbk G}$ with the universal enveloping algebra of the graded Lie algebra attached to the dimension series
\[
G_n=G\cap(1+\varpi^n),
\qquad
\LieL_\Bbbk=\bigoplus_{n\ge1}(G_n/G_{n+1})\otimes_{\mathbb Z}\Bbbk.
\]
Thus the Lie algebra $\LieL_\Bbbk$ is amenable, and Vershik's question is whether every amenable algebra of this form has subexponential growth.

\subsection*{Sketch of the main argument}
The heart of the paper is the quantitative rounding theorem, Theorem~\ref{thm:hopf}. Given a finite-dimensional subcoalgebra $F\leq H$ and a finite-dimensional subspace $E\leq M$, put $A=\Bbbk1+F$ and choose a finite-dimensional subcoalgebra $G\leq M$ containing $E$. For $t\geq0$, let $C_t$ be the largest subcoalgebra of $G$ maximizing
\[
 \dim(E\cap C)-t\dim C.
\]
The density/coarea identity says that $\int_0^\infty\dim C_t\,dt=\dim E$, while maximality gives a semistability inequality for $U_t=E\cap C_t$ relative to every subcoalgebra of $C_t$.

The main algebraic input is Proposition~\ref{prop:module-coalgebra-transfer}. It says that this semistability inequality survives the action of the finite-dimensional cocommutative coalgebra $A$. When $A$ has a complete flag by subcoalgebras, the proof proceeds one codimension-one step at a time: the new one-dimensional layer produces a group-like element, and the resulting dimension estimates telescope. Over an arbitrary field, one passes to a finite extension where such a flag exists, using tensor amplification to preserve semistability under base change.

Let $D_t$ be the corresponding density maximizer for $AE$. The transfer inequality and maximality imply the pointwise inclusion $AC_t\leq D_t$. Hence
\[
 \int_0^\infty\dim AC_t\,dt\leq\dim AE,
 \qquad
 \int_0^\infty\dim C_t\,dt=\dim E.
\]
For some $t$ with $C_t\neq0$, therefore, $\dim AC_t/\dim C_t\leq\dim AE/\dim E$. Since $AV=V+FV$, this is exactly the desired rounding estimate. The equivalence between coalgebraic and ordinary module amenability follows by first enlarging an arbitrary finite-dimensional test space in $H$ to a finite-dimensional subcoalgebra.

\subsection*{Acknowledgement of AI use}
The author is grateful to Be'eri Greenfeld and Artem Semidetnov for entertaining discussions, and valuable comments on the clarity of the text.

ChatGPT 5.6 Sol was used to assist with drafting, editing, checking parts of this manuscript, and exploratory formal verification. All mathematical content was reviewed, checked, and substantially revised by the author, who is responsible for the final text.

All the main results have been formalized by ChatGPT in \textsf{Lean/Mathlib}, and are available at\\
\centerline{\url{https://github.com/laurentbartholdi/hopfamenability}}

\subsection*{Table of contents}
Section~\ref{ss:rounding} proves the rounding and quotient results for Hopf-module coalgebras. Section~\ref{ss:hopfamen} studies amenability of Hopf algebras, and proves permanence under Hopf subalgebras in the cocommutative case and under cleft extensions. Section~\ref{ss:quotients} applies these results to subgroups and extensions of amenable groups, and proves Gromov's claim about modules of amenable groups. Sections~\ref{ss:lieamen} and~\ref{ss:elem} develop the ``expected'' results respectively on amenability and elementary amenability of Lie algebras. Section~\ref{ss:expgrowth} gives an example of amenable Lie algebra of exponential growth. Section~\ref{ss:graded} develops the theory for graded Hopf-module coalgebras, with an eye towards Vershik's conjecture.

%%%%%%%%%%%%%%%%%%%%%%%%%%%%%%%%%%%%%%%%%%%%%%%%%%%%%%%%%%%%%%%%
\section{Rounding in Hopf-module coalgebras}\label{ss:rounding}

Let $H$ be a cocommutative Hopf algebra over the field $\Bbbk$; see~\cite{montgomery:haaar} for a classical reference.  Let $M$ be a left $H$-module coalgebra; thus $M$ is a coalgebra,
endowed with a left $H$-module structure, and
\[
 \Delta(hm)
   = \sum h_{(1)}m_{(1)}\otimes h_{(2)}m_{(2)},
 \qquad
 \varepsilon(hm)=\varepsilon(h)\varepsilon(m)
\]
for all $h\in H$ and $m\in M$.

If $A\leq H$ and $V\leq M$ are subspaces, we write
\[
 AV=\langle a v:a\in A,\ v\in V\rangle_\Bbbk.
\]

\begin{thm}[Coalgebraic rounding for module coalgebras]
\label{thm:module-coalgebra-rounding}
Let $F\leq H$ be a finite-dimensional subcoalgebra and let
$0\neq E\leq M$ be a finite-dimensional subspace.  Then there exists
a nonzero finite-dimensional subcoalgebra $C\leq M$ such that
\[
 \frac{\dim(C+FC)}{\dim C}
 \leq
 \frac{\dim(E+FE)}{\dim E}.
\]
\end{thm}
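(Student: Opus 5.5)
We follow the strategy announced in the sketch of the introduction. Put $A=\Bbbk1+F$; since $\Bbbk1$ is a subcoalgebra of $H$ (the unit is group-like), $A$ is a finite-dimensional subcoalgebra, and $AV=V+FV$ for every $V\le M$. Every finite-dimensional subspace of a coalgebra lies in a finite-dimensional subcoalgebra (fundamental theorem of coalgebras). So we choose finite-dimensional subcoalgebras $G\ge E$ and $G'\ge AE+AG$. Subcoalgebras of $G$ are closed under sums. For $t\ge0$, let $C_t$ be the largest subcoalgebra $C\le G$ maximizing $\phi_t(C)=\dim(E\cap C)-t\dim C$. It exists because if $C,C'$ both maximize, then $\dim(E\cap(C+C'))\ge\dim(E\cap C)+\dim(E\cap C')-\dim(E\cap C\cap C')$ and $C\cap C'$ is again a subcoalgebra (coalgebras over a field), so $C+C'$ also maximizes. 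Similarly let $D_t\le G'$ be the largest maximizer of $\dim(AE\cap D)-t\dim D$.

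\emph{Coarea identity.} The family $C_t$ is nonincreasing in $t$, with $C_0\supseteq E$-saturated (in fact $E\cap C_0=E$) and $C_t=0$ for $t>1$. I will prove $\int_0^\infty\dim C_t\,dt=\dim E$. Here $t\mapsto\max_C\phi_t(C)$ is a convex piecewise linear function, with value $\dim E$ at $t=0$ and $0$ for $t\ge1$, whose derivative is $-\dim C_t$ almost everywhere. Integrating the derivative gives the identity, and the same argument gives $\int\dim D_t\,dt=\dim AE$.

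\emph{Semistability.} By maximality, for every subcoalgebra $C'\le C_t$ we have
\[
\dim(E\cap C_t)-\dim(E\cap C')\ge t(\dim C_t-\dim C'),
\]
and for every $C''\supsetneq C_t$ the reverse inequality is strict. The key step, which is the transfer proposition promised in the introduction, is that this inequality survives multiplication by $A$: for every subcoalgebra $D'\le AC_t$,
\[
\dim(AE\cap AC_t)-\dim(AE\cap D')\ge t(\dim AC_t-\dim D').
\]
Granting this, $AC_t+D_t$ is again a maximizer, so $AC_t\le D_t$. Hence $\int\dim AC_t\,dt\le\dim AE$ while $\int\dim C_t\,dt=\dim E$. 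If $\dim AC_t>\lambda\dim C_t$ for all $t$ with $C_t\ne0$, where $\lambda=\dim AE/\dim E$, integration gives a contradiction. So some nonzero $C=C_t$ satisfies the required inequality.

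\emph{Main obstacle: the transfer inequality.} I would first treat the case where $A$ admits a complete flag $\Bbbk1=A_1<A_2<\dots<A_n=A$ of subcoalgebras, and induct on the flag. For a codimension-one step $A_{i+1}=A_i+\Bbbk a$, cocommutativity and the flag condition make $\Delta a\equiv a\otimes g+g'\otimes a$ modulo $A_i\otimes A_i$ with $g,g'$ group-like. Then $A_{i+1}V/A_iV$ is the image of $V$ under a map twisted by group-like elements, and group-likes act by coalgebra automorphisms of $M$ (invertible with inverse $S(g)$). Translates of semistable pairs are therefore semistable, and a submodularity count lets the layer-by-layer dimension estimates telescope. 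For general $\Bbbk$, I would pass to a finite extension $\Bbbk'$ over which $A\otimes\Bbbk'$ has such a flag; for cocommutative $A$ this exists after splitting the coradical. Dimensions are unchanged by base change, but maximizers might not descend, so I expect to compare with tensor powers ($E^{\otimes k}\le M^{\otimes k}$) to show that semistability is preserved up to errors vanishing as $k\to\infty$. This base-change step and the precise telescoping are where I expect the real difficulty.
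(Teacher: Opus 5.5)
Your outer argument is the paper's. The choice $A=\Bbbk1+F$, the density maximizers $C_t$ and $D_t$, the coarea identity, the inclusion $AC_t\le D_t$ and the final averaging are all correct. Your derivation of $AC_t\le D_t$ directly from supermodularity, by testing your displayed inequality against $D'=AC_t\cap D_t$, is a valid substitute for the paper's comparison lemma. The gap is that the displayed transfer inequality is the entire content of the theorem (Proposition~\ref{prop:module-coalgebra-transfer} in the paper), and you do not prove it, as you acknowledge. Two ingredients are missing.

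\emph{The codimension-one step.} Semistability of $(U_t,C_t)$ can only be invoked against subcoalgebras of $C_t$. So for each subcoalgebra $D\le AC$ you must produce a subcoalgebra of $C$ to test against. The fact that group-likes act by coalgebra automorphisms (``translates of semistable pairs are semistable'') does not supply one. The missing observation is this: for $A=A'+\Bbbk a$ with $A'$ a subcoalgebra, the space $B=\{c\in C: ac\in D+A'C\}$ is a subcoalgebra of $C$.
\begin{itemize}
\item Since $D+A'C$ is a subcoalgebra, apply $q\otimes\id$ and $\id\otimes q$ to $\Delta(ac)$, where $q$ is the quotient map by $D+A'C$.
\item The relations $\Delta(a)-a\otimes g\in A'\otimes A$ and $\Delta(a)-g\otimes a\in A\otimes A'$, together with injectivity of $g$, give $\Delta(B)\subseteq(B\otimes C)\cap(C\otimes B)=B\otimes B$.
\item Rank--nullity for $c\mapsto ac+(D+A'C)$ and for $u\mapsto au+(D+A'U)$, plus semistability against $B$, yields $t(\dim AC-\dim(D+A'C))\le\dim(D+AU)-\dim(D+A'U)$.
\item This telescopes with the inductive hypothesis applied to $D\cap A'C$.
\end{itemize}
The statement that inducts cleanly along the flag is $t$-semistability of $(A_iU_t,A_iC_t)$ with $U_t=E\cap C_t$. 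Your inequality involving $AE\cap AC_t$ then follows from the last case, because $AU_t\le AE\cap AC_t$.

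\emph{Base change.} Over a non-split field you need the hypothesis over $\K$. That means semistability of $(U_\K,C_\K)$ against \emph{all} $\K$-subcoalgebras of $C_\K$, not only those defined over $\Bbbk$. The conclusion then descends trivially, since $D\mapsto D_\K$ preserves dimensions, products and intersections; no maximizer ever has to descend. Your tensor-power comparison with $E^{\otimes k}$ and asymptotically vanishing errors is left unspecified, and it is not needed. The paper lifts semistability exactly, via Lemma~\ref{lem:tensor-semistability}: for every $Z\le W\otimes C$ stable under both regular $C$-coactions, $t(\dim W\dim C-\dim Z)\le\dim W\dim U-\dim(Z\cap(W\otimes U))$. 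This is proved by induction on $\dim W$: a functional $\ell$ on $W$ projects $Z$ onto the subcoalgebra $(\ell\otimes\id)(Z)\le C$, where semistability applies. Taking $W=\K$ and dividing by $[\K:\Bbbk]$ gives the required lift. Without these two ingredients, your proposal reduces the theorem to its main lemma but does not prove it.
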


This is really a strengthened form of Theorem~\ref{thm:hopf}. Indeed, given a finite-dimensional subspace $F\le H$ and $\epsilon>0$, the fundamental theorem of coalgebras~\cite{sweedler:ha}*{Theorem~2.2.1} provides a finite-dimensional subcoalgebra $F_1\le H$ containing $F$. If $M$ is amenable qua ordinary $H$-module, choose a finite-dimensional subspace $E\le M$ with $\dim(E+F_1E)<(1+\epsilon)\dim E$. Theorem~\ref{thm:module-coalgebra-rounding}, applied to $F_1$, produces a finite-dimensional subcoalgebra $C\le M$ with
\[
\dim(C+F_1C)<(1+\epsilon)\dim C,
\]
and therefore also $\dim(C+FC)<(1+\epsilon)\dim C$. Thus $M$ is amenable qua Hopf-module coalgebra. The converse direction is immediate.

The proof is based on a density filtration. Even though our treatment is entirely self-contained, the ideas originate in~\cite{harder-narasimhan:cohomology} in filtrations of vector bundles, and in combinatorial optimization~\cites{fujishige:psss,fujishige:submodular}. We took inspiration from~\cite{randriambololona:hn}'s lattice-theoretic presentation of the Harder-Narasimhan theory. We shall use the following elementary facts about density filtrations repeatedly.

\begin{lem}[Density/coarea lemma]\label{lem:density-coarea}
Let $W$ be a finite-dimensional vector space and let $\mathcal L$ be a collection of subspaces of $W$, containing $0,W$ and closed under sums and intersections. Let
\[
r\colon\mathcal L\to\mathbb Z_{\ge0}
\]
be monotone and supermodular (namely, $r(V)+r(V')\le r(V\cap V')+r(V+V')$), with $r(0)=0$. For $t\ge0$, there exists a unique largest maximizer $C_t$ of
\[
\Phi_t(V)=r(V)-t\dim V,
\qquad V\in\mathcal L.
\]
Then
\begin{equation}\label{eq:abstract-coarea}
\int_0^\infty \dim C_t\,dt=r(W).
\end{equation}
\end{lem}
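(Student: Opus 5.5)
We prove Lemma~\ref{lem:density-coarea} in two parts. First we show that a unique largest maximizer exists. Then we show that $t\mapsto C_t$ is a decreasing step function, and compute the integral by the layer-cake formula.

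\emph{Existence and uniqueness.} Since $\mathcal L$ is finite-dimensional data, we cannot assume $\mathcal L$ is finite. However, $\Phi_t$ takes values in the set $\{r(V)-t d: 0\le d\le\dim W\}$. Since $r$ is monotone, $r(V)\le r(W)$, so this set of values is finite. Hence a maximum value $m_t$ is attained.

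The function $V\mapsto t\dim V$ is modular, because $\dim V+\dim V'=\dim(V\cap V')+\dim(V+V')$. So $\Phi_t$ is supermodular. If $V,V'$ are both maximizers, then
\[2m_t=\Phi_t(V)+\Phi_t(V')\le\Phi_t(V\cap V')+\Phi_t(V+V')\le 2m_t.\]
Hence $V+V'$ is also a maximizer. Take a maximizer of largest dimension; call it $C_t$. It contains every other maximizer $V$, since otherwise $C_t+V$ would be a maximizer of larger dimension. So $C_t$ is the unique largest maximizer.

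\emph{Monotonicity.} Let $s<t$. We claim $C_t\le C_s$. Set $V=C_s$ and $V'=C_t$. Supermodularity of $\Phi_s$, together with maximality of $C_s$ for $\Phi_s$, gives
\[\Phi_s(C_s+C_t)-\Phi_s(C_s)\le 0,\]
and hence
\[\Phi_s(C_t)-\Phi_s(C_s\cap C_t)\le\Phi_s(C_s+C_t)-\Phi_s(C_s)\le0.\]
Now
\[\Phi_t(C_t)-\Phi_t(C_s\cap C_t)=\bigl[\Phi_s(C_t)-\Phi_s(C_s\cap C_t)\bigr]-(t-s)\bigl(\dim C_t-\dim(C_s\cap C_t)\bigr).\]
The first bracket is $\le0$ and the second term is $\le0$. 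On the other hand, the left side is $\ge0$ because $C_t$ maximizes $\Phi_t$. So both terms vanish, forcing $\dim C_t=\dim(C_s\cap C_t)$, that is, $C_t\le C_s$.

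It follows that $t\mapsto\dim C_t$ is nonincreasing, with values in $\{0,\dots,\dim W\}$, so it is a step function with finitely many jumps. At $t=0$, monotonicity of $r$ gives $C_0=W$. For $t>r(W)$, every $V\ne0$ has $\Phi_t(V)<0=\Phi_t(0)$, so $C_t=0$.

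\emph{The integral.} This is the main step. The function $m(t)=\max_V\Phi_t(V)$ is a maximum of finitely many affine functions of $t$ (finitely many value pairs $(r,\dim)$ occur). So $m$ is convex and piecewise linear. Away from finitely many breakpoints, its slope at $t$ is $-\dim C_t$. Indeed, $C_t$ is a maximizer there, and at a generic $t$ all maximizers share the same $\dim$-value, since otherwise two distinct lines would cross at $t$.

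The boundary values are $m(0)=\Phi_0(W)=r(W)$ and, for large $t$, $m(t)=\Phi_t(0)=0$. Integrating the slope from $0$ to $T$ large gives
\[0-r(W)=-\int_0^T\dim C_t\,dt,\]
which is \eqref{eq:abstract-coarea}.

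The only delicate point is justifying the derivative formula at generic $t$. Alternatively, one can avoid derivatives entirely: list the finitely many distinct $C_t$ as $W=C^{(0)}>C^{(1)}>\dots>C^{(k)}=0$ with breakpoints $t_1<\dots<t_k$. At each breakpoint $t_i$, both $C^{(i-1)}$ and $C^{(i)}$ are maximizers, so
\[r(C^{(i-1)})-r(C^{(i)})=t_i\bigl(\dim C^{(i-1)}-\dim C^{(i)}\bigr).\]
Summing over $i$ telescopes to $r(W)=\sum_i t_i\bigl(\dim C^{(i-1)}-\dim C^{(i)}\bigr)$, which equals $\int_0^\infty\dim C_t\,dt$ by the layer-cake formula.
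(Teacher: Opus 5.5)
Your proof is correct and follows the paper's own route. The largest maximizer comes from supermodularity of $\Phi_t$ (sums of maximizers are maximizers), and the integral comes from writing $\max_V\Phi_t(V)$ as a maximum of finitely many affine functions with slope $-\dim C_t$ away from the breakpoints, then integrating between the value $r(W)$ at $t=0$ and the value $0$ for large $t$.

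The monotonicity of $t\mapsto C_t$ and the telescoping variant are not needed for this argument. In the telescoping variant, the claim that both $C^{(i-1)}$ and $C^{(i)}$ maximize $\Phi_{t_i}$ is stated without proof; it follows in one line because each $\Phi_t(V)$ and their maximum are continuous in $t$.
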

The integral is a finite weighted sum: the function $t\mapsto C_t$ has only finitely many changes of dimension.
\begin{proof}
Because $r$ is supermodular and dimension is modular, the sum and intersection of two maximizers are again maximizers. Choose a maximizer of largest dimension. Its sum with any other maximizer is again a maximizer and cannot have larger dimension, so it contains every maximizer. Thus there is a unique largest maximizer $C_t$.

Put
\[
h(t)=\max_{V\in\mathcal L}\{r(V)-t\dim V\}.
\]
Only finitely many pairs $(\dim V,r(V))$ can occur, so $h$ is the maximum of finitely many affine functions. Since $r$ is monotone, $h(0)=r(W)$, while $h(t)=0$ for all sufficiently large $t$, because $0\in\mathcal L$. Away from the finitely many breakpoints, the maximizing dimension is unique and $h'(t)=-\dim C_t$. Integrating gives \eqref{eq:abstract-coarea}.
\end{proof}

\begin{lem}[Comparison and naturality of density filtrations]
\label{lem:density-comparison}
Let $\mathcal L$ be as in Lemma~\ref{lem:density-coarea}, and let $r,\widetilde r$ satisfy the hypotheses of that lemma on $\mathcal L$. Denote their largest maximizers at parameter $t$ by $C_t,\widetilde C_t$. If $\widetilde r-r$ is monotone, then $C_t\leq\widetilde C_t$ for every $t\geq0$.

Moreover, let $T\colon W\to W'$ be a linear isomorphism carrying $\mathcal L$ onto a collection $\mathcal L'$ of subspaces of $W'$, and let $r'\colon\mathcal L'\to\mathbb Z_{\ge0}$ satisfy the hypotheses of Lemma~\ref{lem:density-coarea} and $r'(T(V))=r(V)$. Then $T(C_t)=C'_t$, where $C'_t$ is the largest maximizer associated with $r'$.
\end{lem}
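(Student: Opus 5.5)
The comparison statement follows from a standard exchange argument with the largest maximizers, and naturality is a transport of structure. Fix $t\ge0$; write $C=C_t$ and $\widetilde C=\widetilde C_t$, with $\Phi_t,\widetilde\Phi_t$ the corresponding functionals. Since $\mathcal L$ is closed under sums and intersections, $C+\widetilde C$ and $C\cap\widetilde C$ lie in $\mathcal L$. We aim to show that $C+\widetilde C$ is again a maximizer of $\widetilde\Phi_t$. Then maximality of $\widetilde C$ as the \emph{largest} maximizer (Lemma~\ref{lem:density-coarea}) forces $C+\widetilde C=\widetilde C$, i.e. $C\le\widetilde C$.

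The key chain of inequalities is as follows. Because $C$ maximizes $\Phi_t$, we have $\Phi_t(C\cap\widetilde C)\le\Phi_t(C)$. Supermodularity of $r$ together with modularity of $\dim$ then gives
\[
\Phi_t(C+\widetilde C)-\Phi_t(\widetilde C)\ \ge\ \Phi_t(C)-\Phi_t(C\cap\widetilde C)\ \ge\ 0 .
\]
Since $\widetilde r-r$ is monotone and $\widetilde C\le C+\widetilde C$, we get $(\widetilde r-r)(C+\widetilde C)\ge(\widetilde r-r)(\widetilde C)$. Adding this to the previous display yields
\[
\widetilde\Phi_t(C+\widetilde C)\ \ge\ \widetilde\Phi_t(\widetilde C).
\]
So $C+\widetilde C$ is a maximizer of $\widetilde\Phi_t$ and, as noted, is contained in $\widetilde C$. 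The only point needing care is the order of the two steps. Supermodularity must be applied to $r$, not to $\widetilde r$: it is $r$'s maximizer $C$ we compare against. The monotone difference is then used only along the inclusion $\widetilde C\le C+\widetilde C$. No other property of $\widetilde r$ is needed beyond its making $\widetilde C$ well defined.

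For naturality, $T$ induces a bijection $\mathcal L\to\mathcal L'$ preserving inclusions and dimensions, and $r'\circ T=r$. Hence $\Phi'_t(T(V))=\Phi_t(V)$ for all $V\in\mathcal L$. Therefore $T$ maps the set of maximizers of $\Phi_t$ bijectively onto the set of maximizers of $\Phi'_t$. Since $T$ preserves inclusion, it sends the largest maximizer to the largest, so $T(C_t)=C'_t$. This part is purely formal. I expect no real obstacle anywhere; the only subtle step is the sign bookkeeping in the exchange inequality above.
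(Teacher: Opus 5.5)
Your proof is correct and is essentially the paper's exchange argument: you show that $C+\widetilde C$ is again a maximizer of $\widetilde\Phi_t$, invoke the largest-maximizer property, and handle naturality the same way the paper does. The only difference is bookkeeping: the paper applies supermodularity to $\widetilde r$ and uses the monotone difference along $C\cap\widetilde C\le C$, so your aside that supermodularity \emph{must} be applied to $r$ rather than $\widetilde r$ is inaccurate---either order works.
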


\begin{proof}
For the first assertion, put $C=C_t$, $\widetilde C=\widetilde C_t$, and $K=C\cap\widetilde C$. Maximality of $C$, monotonicity of $\widetilde r-r$, and supermodularity of $\widetilde r$ give
\[
\begin{split}
 \widetilde r(C+\widetilde C)-\widetilde r(\widetilde C)
 &\geq \widetilde r(C)-\widetilde r(K)\\
 &\geq r(C)-r(K)
 \geq t(\dim C-\dim K).
\end{split}
\]
Since $\dim(C+\widetilde C)-\dim\widetilde C=\dim C-\dim K$, the space $C+\widetilde C$ is also a maximizer for $\widetilde r$. The largest-maximizer property therefore gives $C+\widetilde C=\widetilde C$, and hence $C\leq\widetilde C$.

For the second assertion, $T$ preserves dimensions and identifies the two density functionals. It therefore carries maximizers bijectively to maximizers, and hence carries the largest maximizer to the largest maximizer.
\end{proof}

The fundamental notion we shall use is that of ``semistability'', inspired by semistability of vector bundles. Let $C$ be a finite-dimensional coalgebra, let $U\leq C$ be a subspace, and consider $t\in\mathbb R$. The pair $(U,C)$ is \emph{$t$-semistable} if
\begin{equation}\tag{SS}
  t(\dim C-\dim B)\leq \dim U-\dim(U\cap B)
 \label{eq:SS}
\end{equation}
for every subcoalgebra $B\leq C$.

We shall use the following elementary ``tensor amplification'' of the semistability inequality.  It will allow us to pass to an arbitrary finite extension of the ground field.

\begin{lem}[Tensor amplification of semistability]
\label{lem:tensor-semistability}
Let $C$ be a finite-dimensional coalgebra, let $U\leq C$, and let $t\in\mathbb R$.  Suppose that $(U,C)$ is $t$-semistable. Let $W$ be a finite-dimensional vector space, and let $Z\leq W\otimes C$ be a subspace stable under both regular $C$-coactions: for
\[
 \rho(w\otimes c)=\sum (w\otimes c_{(1)})\otimes c_{(2)},
 \qquad
 \lambda(w\otimes c)=\sum c_{(1)}\otimes(w\otimes c_{(2)}),
\]
assume that
\[
 \rho(Z)\subseteq Z\otimes C,
 \qquad
 \lambda(Z)\subseteq C\otimes Z.
\]
Then
\begin{equation}
 t\bigl(\dim W\,\dim C-\dim Z\bigr)
 \leq
 \dim W\,\dim U-\dim\bigl(Z\cap(W\otimes U)\bigr).
 \label{eq:tensor-amplification}
\end{equation}
\end{lem}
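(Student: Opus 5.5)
The plan is to reduce \eqref{eq:tensor-amplification} to $\dim W$ separate instances of the semistability inequality \eqref{eq:SS}. I will do this by filtering $W$ by a complete flag and reading off one subcoalgebra of $C$ from each layer.

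Put $n=\dim W$ and fix a complete flag $0=W_0<W_1<\dots<W_n=W$ with $\dim W_i=i$. Set
\[
Z_i=Z\cap(W_i\otimes C),\qquad Y_i=Z\cap(W_i\otimes U),
\]
so that $Y_n=Z\cap(W\otimes U)$. Since $W_i/W_{i-1}$ is one-dimensional, the map
\[
\pi_i\colon W_i\otimes C\to (W_i/W_{i-1})\otimes C\cong C
\]
is well defined. It intertwines both regular coactions $\rho,\lambda$, because these act only on the $C$-factor.

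Define $B_i=\pi_i(Z_i)\le C$. The first step is to show that $B_i$ is a subcoalgebra. Since $Z$ is stable under $\rho$ and $\lambda$, so is $Z_i$: the coactions preserve $W_i\otimes C$, and $(Z\otimes C)\cap(W_i\otimes C\otimes C)=Z_i\otimes C$ (and symmetrically for $\lambda$). Hence $B_i$ satisfies
\[
\Delta(B_i)\subseteq (B_i\otimes C)\cap(C\otimes B_i)=B_i\otimes B_i,
\]
where the last equality is a general linear-algebra fact about subspaces proved by choosing a basis of $C$ extending one of $B_i$. So $B_i$ is a subcoalgebra.

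Next, $\ker(\pi_i|_{Z_i})=Z\cap(W_{i-1}\otimes C)=Z_{i-1}$, so $\dim Z_i-\dim Z_{i-1}=\dim B_i$ and therefore $\dim Z=\sum_i\dim B_i$. Likewise, $\pi_i$ maps $Y_i$ into $\pi_i(W_i\otimes U)\cap\pi_i(Z_i)=U\cap B_i$, with kernel $Y_i\cap(W_{i-1}\otimes C)=Y_{i-1}$. This gives
\[
\dim Z\cap(W\otimes U)\le\sum_i\dim(U\cap B_i).
\]

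Finally, apply \eqref{eq:SS} to each $B_i$ and sum over $i=1,\dots,n$:
\[
t\bigl(n\dim C-\textstyle\sum_i\dim B_i\bigr)\le n\dim U-\textstyle\sum_i\dim(U\cap B_i).
\]
Substituting $\dim Z=\sum_i\dim B_i$ on the left and the upper bound for $\dim Z\cap(W\otimes U)$ on the right yields \eqref{eq:tensor-amplification}. Note that $t$ is never required to be nonnegative: on the left we use an exact equality, and on the right only an upper bound on a subtracted term, which can only weaken the right-hand side, so the inequality goes in the correct direction.

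I expect the main point of care to be verifying that $B_i$ is a subcoalgebra. This requires checking that intersecting with $W_i\otimes C$ preserves two-sided coaction-stability and that $(B\otimes C)\cap(C\otimes B)=B\otimes B$. The rest is dimension counting.
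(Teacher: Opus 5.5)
Your proof is correct and is the paper's argument unrolled. The paper inducts on $\dim W$ by peeling off one hyperplane $W_0=\ker\ell$ and setting $B=(\ell\otimes\id_C)(Z)$, which is exactly your top layer $B_n$, and the induction then descends along your flag; the subcoalgebra check, the count $\dim Z=\sum_i\dim B_i$, and the bound $\dim Z\cap(W\otimes U)\le\sum_i\dim(U\cap B_i)$ correspond step for step.
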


\begin{proof}
We argue by induction on $\dim W$.  There is nothing to prove when $W=0$.  Otherwise choose a nonzero functional $\ell\colon W\to\Bbbk$, choose $a\in W$ with $\ell(a)=1$, and put $W_0=\ker\ell$.  Thus $\dim W=\dim W_0+1$. Define
\[
 \phi=\ell\otimes\id_C\colon W\otimes C\rightarrow C,
 \qquad
 B=\phi(Z).
\]
We first claim that $B$ is a subcoalgebra of $C$.  The right-coaction stability of $Z$ gives
\[
 \Delta(B)\subseteq B\otimes C,
\]
because applying $\ell$ to the $W$-factor in $\rho(Z)\subseteq Z\otimes C$ gives precisely the comultiplication of $\phi(Z)$.  Similarly the left-coaction stability gives
\[
 \Delta(B)\subseteq C\otimes B.
\]
Since, over a field,
\[
 (B\otimes C)\cap(C\otimes B)=B\otimes B,
\]
it follows that $B$ is a subcoalgebra.

Put
\[
 Z_0=Z\cap(W_0\otimes C),
\]
viewed as a subspace of $W_0\otimes C$.  Since $W_0\otimes C$ is itself stable under both coactions, so is $Z_0$, and the induction hypothesis applies to $Z_0$.  Moreover $\ker\phi=W_0\otimes C$, so restriction of $\phi$ to $Z$ gives an exact sequence
\[
 0\longrightarrow Z_0\longrightarrow Z\overset{\phi}{\longrightarrow}
 B\longrightarrow0,
\]
and therefore
\begin{equation}
 \dim Z=\dim Z_0+\dim B.
 \label{eq:tensor-Z-dim}
\end{equation}

Set
\[
 N=Z\cap(W\otimes U),
 \qquad
 N_0=Z_0\cap(W_0\otimes U).
\]
The kernel of $\phi|_N$ is $N_0$, while $\phi(N)\subseteq U\cap B$.  Therefore
\begin{equation}
 \dim N\leq \dim N_0+\dim(U\cap B).
 \label{eq:tensor-N-dim}
\end{equation}
By induction,
\[
 t\bigl(\dim W_0\,\dim C-\dim Z_0\bigr)
 \leq
 \dim W_0\,\dim U-\dim N_0,
\]
and by \eqref{eq:SS}, applied to the subcoalgebra $B$,
\[
 t(\dim C-\dim B)\leq \dim U-\dim(U\cap B).
\]
Adding these inequalities, and using~\eqref{eq:tensor-Z-dim}, $\dim W=\dim W_0+1$, and~\eqref{eq:tensor-N-dim}, gives \eqref{eq:tensor-amplification}.
\end{proof}

\subsection{Finite extensions of the ground field}
We isolate three elementary scalar-extension facts.

\begin{lem}[Base change of semistability]\label{lem:semistability-base-change}
Let $\K/\Bbbk$ be a finite field extension, let $C$ be a finite-dimensional coalgebra, and let $U\le C$. If $(U,C)$ is $t$-semistable for some $t\in\mathbb R$,
then $(\K\otimes U,\K\otimes C)$ is $t$-semistable.
\end{lem}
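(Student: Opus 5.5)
The natural tool is Lemma~\ref{lem:tensor-semistability}, applied with $W=\K$ regarded as a $\Bbbk$-vector space of dimension $d=[\K:\Bbbk]$. We must check \eqref{eq:SS} for $(\K\otimes U,\K\otimes C)$ over $\K$: for every $\K$-subcoalgebra $B'\le\K\otimes C$,
\[
t\bigl(\dim_\K(\K\otimes C)-\dim_\K B'\bigr)\le\dim_\K(\K\otimes U)-\dim_\K\bigl((\K\otimes U)\cap B'\bigr).
\]
The plan is to regard $B'$ as a $\Bbbk$-subspace $Z\le W\otimes C$ with $W=\K$, and then verify the hypotheses of Lemma~\ref{lem:tensor-semistability}.

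The coalgebra structure on $\K\otimes C$ is $\id_\K\otimes\Delta$, valued in $(\K\otimes C)\otimes_\K(\K\otimes C)\cong\K\otimes C\otimes C$. Under this identification, $\Delta_\K(B')\subseteq B'\otimes_\K B'$ becomes a statement inside $\K\otimes C\otimes C$. The regular coactions from the lemma are
\[
\rho(w\otimes c)=\sum (w\otimes c_{(1)})\otimes c_{(2)}\in(W\otimes C)\otimes C,
\qquad
\lambda(w\otimes c)=\sum c_{(1)}\otimes(w\otimes c_{(2)}),
\]
and both agree with $\Delta_\K$ up to this identification, with $\lambda$ involving a harmless reordering of factors. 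Since $B'\otimes_\K B'\subseteq B'\otimes_\K(\K\otimes C)\cong B'\otimes_\Bbbk C$, we get $\rho(Z)\subseteq Z\otimes C$, and symmetrically $\lambda(Z)\subseteq C\otimes Z$. This identification of $\K$-tensor products with $\Bbbk$-tensor products is the one step requiring care, and I expect it to be the main (though mild) obstacle. The key fact is that $(\K\otimes C)\otimes_\K(\K\otimes C)\to\K\otimes_\Bbbk C\otimes_\Bbbk C$ is an isomorphism, and that it carries $B'\otimes_\K(\K\otimes C)$ onto $B'\otimes_\Bbbk C$.

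Applying \eqref{eq:tensor-amplification} with $Z=B'$, and using $W\otimes U=\K\otimes U$, gives
\[
t\bigl(d\dim C-\dim_\Bbbk B'\bigr)\le d\dim U-\dim_\Bbbk\bigl(B'\cap(\K\otimes U)\bigr).
\]
Every $\Bbbk$-dimension here is $d$ times the corresponding $\K$-dimension, since $B'$ and $B'\cap(\K\otimes U)$ are $\K$-subspaces. Dividing by $d>0$ therefore yields exactly the desired semistability inequality over $\K$.
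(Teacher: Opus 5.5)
Your proposal is correct and follows essentially the same route as the paper. Like the paper, you restrict scalars so that a $\K$-subcoalgebra becomes a $\Bbbk$-subspace of $\K\otimes C$ stable under both regular $C$-coactions, then apply Lemma~\ref{lem:tensor-semistability} with $W=\K$, and finally divide by $d=[\K:\Bbbk]$ using that all relevant spaces are $\K$-subspaces.
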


\begin{proof}
Write $C_\K=\K\otimes_\Bbbk C$ and $U_\K=\K\otimes_\Bbbk U$, put $d=[\K:\Bbbk]$. Let $Z\le C_\K$ be a $\K$-subcoalgebra, regarded as a $\Bbbk$-subspace of $C_\K$. Since $Z$ is a $\K$-subcoalgebra, after restricting scalars it is stable under both regular $C$-coactions. Indeed, applying to $\Delta_\K(Z)\subseteq Z\otimes_\K Z$ the canonical cancellation maps
\[
 C_\K\otimes_\K C_\K\longrightarrow C_\K\otimes_\Bbbk C,
 \qquad
 C_\K\otimes_\K C_\K\longrightarrow C\otimes_\Bbbk C_\K
\]
gives respectively the coactions $\alpha\otimes c\mapsto\sum(\alpha\otimes c_{(1)})\otimes c_{(2)}$ and $\alpha\otimes c\mapsto\sum c_{(1)}\otimes(\alpha\otimes c_{(2)})$. Lemma~\ref{lem:tensor-semistability}, with $W=\K$, therefore gives
\[
 t(d\dim_\Bbbk C-\dim_\Bbbk Z)
 \le d\dim_\Bbbk U-\dim_\Bbbk\bigl(Z\cap(\K\otimes U)\bigr).
\]
All spaces involving $Z$ are $\K$-subspaces, so division by $d$ gives the result.
\end{proof}

\begin{lem}[Splitting a finite cocommutative coalgebra]\label{lem:coalgebra-splitting-field}
  Let $A$ be a finite-dimensional cocommutative coalgebra over $\Bbbk$. There is a finite field extension $\K/\Bbbk$ such that
  \[A_\K\coloneqq\K\otimes_\Bbbk A\]
  admits a complete flag by subcoalgebras.
\end{lem}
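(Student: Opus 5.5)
Dualize. Since $A$ is finite-dimensional, $A^*$ is a finite-dimensional commutative algebra, and subcoalgebras of $A$ correspond bijectively, via annihilators, to ideals of $A^*$: $B\le A$ is a subcoalgebra if and only if $B^\perp\triangleleft A^*$. This correspondence reverses inclusions and is compatible with base change, $(A_\K)^*=\K\otimes_\Bbbk A^*$. So a complete flag of subcoalgebras $0=B_0<B_1<\dots<B_n=A_\K$ with $\dim B_i=i$ is the same as a complete flag of ideals $A^*_\K=I_0>I_1>\dots>I_n=0$ with each quotient $I_{i-1}/I_i$ one-dimensional. The task is therefore to find a finite extension $\K$ such that the commutative finite-dimensional algebra $R_\K=\K\otimes A^*$ has a composition series (as a module over itself) all of whose simple factors are one-dimensional over $\K$.

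The steps are as follows. First, $R=A^*$ is Artinian, with finitely many maximal ideals $\mathfrak m_1,\dots,\mathfrak m_r$, and each residue field $R/\mathfrak m_j$ is a finite extension of $\Bbbk$. Take $\K$ to be a finite normal extension of $\Bbbk$ containing a copy of every $R/\mathfrak m_j$; a compositum of normal closures suffices.

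Second, I claim that every simple $R_\K$-module $S$ is one-dimensional over $\K$. Indeed $S=R_\K/\mathfrak n$ for a maximal ideal $\mathfrak n$. Then $\mathfrak n\cap R$ is a prime ideal of the Artinian ring $R$, hence maximal, say $\mathfrak m_j$. So $S$ is a simple quotient of $\K\otimes_\Bbbk L$, where $L=R/\mathfrak m_j$. Because $\K$ is normal and contains a copy of $L$, the minimal polynomial over $\Bbbk$ of any element of $L$ splits over $\K$. Hence every $\K$-algebra map from $\K\otimes L$ to a field extension of $\K$ lands in $\K$. Consequently every residue field of $\K\otimes L$, which is a finite-dimensional commutative $\K$-algebra, equals $\K$, and so $\dim_\K S=1$.

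Third, since $R_\K$ is finite-dimensional, a maximal chain of ideals exists. Each successive quotient of such a chain is a simple $R_\K$-module, hence one-dimensional over $\K$ by the second step. Taking annihilators then gives the complete flag of subcoalgebras of $A_\K$.

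The main obstacle is the second step when $L/\Bbbk$ is inseparable. Then $\K\otimes L$ is not reduced, and one must see that its quotient by the nilradical still has only $\K$ as residue fields. This follows from the splitting of minimal polynomials: each generator $x$ of $L$ satisfies $\prod(x-\alpha_i)^{e_i}=0$ with $\alpha_i\in\K$, so its image in any residue field equals some $\alpha_i$. Cocommutativity is used precisely to make $A^*$ commutative, so that simple modules are residue fields rather than matrix algebras. Without it, one-dimensional simple factors could not be forced by a field extension alone.
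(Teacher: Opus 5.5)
Your proof is correct. It is essentially the dual of the paper's argument rather than the same argument.

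The paper stays on the coalgebra side. Its coefficient operators $T_i=(\id\otimes e_i^*)\Delta$ are just the action of $e_i^*\in A^*$ on $A$. They commute by cocommutativity, so after adjoining the eigenvalues of these finitely many operators they are simultaneously triangularizable. The resulting invariant flag consists of subspaces $V$ with $\Delta(V)\subseteq V\otimes A_\K$, and cocommutativity is used a second time to upgrade these one-sided coideals to subcoalgebras.

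You instead work with the regular module of the commutative Artinian algebra $A^*$. The annihilator correspondence between subcoalgebras and ideals gives the two-sided condition for free. Your splitting field is chosen intrinsically, as a normal extension containing the residue fields of $A^*$, so it does not depend on a basis. You also check explicitly that inseparable residue fields cause no trouble.

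What each route buys: the paper's is more elementary, needing no duality and only the linear-algebra fact that commuting operators with split characteristic polynomials share an invariant complete flag. Yours is more conceptual. It makes clear that the only role of cocommutativity is to make $A^*$ commutative, so that the condition ``every simple $A^*_\K$-module is one-dimensional'' can be reached by a finite extension. Any composition series of $A^*_\K$ then dualizes to the required flag.
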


\begin{proof}
Choose a basis $(e_i)$ of $A$, with dual basis $(e_i^*)$, and consider the coefficient operators $T_i=(\id\otimes e_i^*)\Delta$. Coassociativity and cocommutativity imply that the $T_i$ commute. Choose a finite extension $\K/\Bbbk$ over which all their characteristic polynomials split. The $T_i$ are then simultaneously triangularizable on $A_\K$, so they have a common invariant complete flag. Invariance under all coefficient operators is exactly the condition $\Delta(V)\subseteq V\otimes A_\K$; by cocommutativity it also gives $\Delta(V)\subseteq A_\K\otimes V$. Hence every member of the flag is a subcoalgebra.
\end{proof}

\begin{lem}[Descent of Følner ratios]\label{lem:folner-descent}
Let $\K/\Bbbk$ be finite, let $A$ be a finite-dimensional subspace of a $\Bbbk$-algebra $H$, and let $M$ be a left $H$-module. Write $A_\K=\K\otimes A$ and $M_\K=\K\otimes M$. For every nonzero finite-dimensional $\K$-subspace $V\le M_\K$ there is a nonzero finite-dimensional $\Bbbk$-subspace $V_0\le M$ such that
\[
 \frac{\dim_\Bbbk AV_0}{\dim_\Bbbk V_0}
 \le \frac{\dim_\K A_\K V}{\dim_\K V}.
\]
\end{lem}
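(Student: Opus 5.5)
Choose a $\Bbbk$-basis $\alpha_1,\dots,\alpha_d$ of $\K$, with $d=[\K:\Bbbk]$. Then $M_\K=\bigoplus_i\alpha_i\otimes M$ as $\Bbbk$-spaces, and $A_\K V=\K\cdot(1\otimes A)V$, because $A_\K=\K\otimes A$ and the action is $\K$-linear. Restricting scalars, $V$ is a $\Bbbk$-subspace of dimension $d\dim_\K V$, and $A_\K V=(1\otimes A)V$ as $\Bbbk$-spaces, since $V$ is already $\K$-stable. So we must find $V_0\le M$ with $\dim_\Bbbk AV_0/\dim_\Bbbk V_0\le\dim_\Bbbk AV/\dim_\Bbbk V$, where $V\le \K\otimes M\cong M^{d}$ is a finite-dimensional $\Bbbk$-subspace and $A$ acts diagonally. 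This is the same situation as the corollary treating $H\otimes M$ as a direct sum of copies, but in the opposite direction: we must pass from a Følner subspace of $M^d$ to one of $M$.

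We proceed by filtering along coordinates. Let $\pi_i\colon M^d\to M$ be the coordinate projections; these are $H$-linear. Set $P_j=\bigoplus_{i\ge j}M$ and $V_j=V\cap P_j$, so that $V=V_1\supseteq V_2\supseteq\dots\supseteq V_{d+1}=0$. Let $X_j=\pi_j(V_j)\le M$. Then $\dim V=\sum_j\dim X_j$, by the exact sequences $0\to V_{j+1}\to V_j\to X_j\to0$.

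For the numerator we need $\dim AV\ge\sum_j\dim AX_j$. Filter $AV$ by $(AV)\cap P_j$ and project. Since $AV_j\subseteq AV\cap P_j$ and $\pi_j(AV_j)=A\pi_j(V_j)=AX_j$, the $j$-th graded piece $\pi_j(AV\cap P_j)$ contains $AX_j$. Summing the dimensions of the graded pieces gives $\dim AV\ge\sum_j\dim AX_j$.

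Now compare $\sum_j\dim AX_j\le\dim AV$ with $\sum_j\dim X_j=\dim V$. Some $j$ with $X_j\ne0$ must satisfy $\dim AX_j/\dim X_j\le\dim AV/\dim V$; take $V_0=X_j$. Converting back to $\K$-dimensions divides both $\dim AV$ and $\dim V$ by $d$, so the ratio is unchanged.

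The only delicate point is the identification $A_\K V=(1\otimes A)V$ and the containment of graded pieces. The first uses that $V$ is a $\K$-subspace, so $\K A V=AV$. The second only needs $H$-linearity of $\pi_j$. I would double-check that $A$ need not contain $1$, but the argument never uses this.
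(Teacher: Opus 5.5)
Your proposal is correct and is essentially the paper's proof. You filter $V$ and $A_\K V$ along a $\Bbbk$-basis of $\K$, identify the graded pieces with subspaces $X_j\le M$ such that $AX_j$ lies in the corresponding graded piece of $A_\K V$, and then average; the only differences are cosmetic (a descending rather than ascending coordinate filtration, and spelling out $A_\K V=(1\otimes A)V$, which the paper leaves implicit).
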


\begin{proof}
Choose a $\Bbbk$-basis $\alpha_1,\ldots,\alpha_d$ of $\K$ and put $K_i=\langle\alpha_1,\ldots,\alpha_i\rangle_\Bbbk$. Filter $V$ by $V^{(i)}=V\cap(K_i\otimes M)$. The coefficient of $\alpha_i$ identifies $V^{(i)}/V^{(i-1)}$ with a subspace $V_i\le M$. If $W=A_\K V$ and $W_i\le M$ are defined similarly, then $AV_i\subseteq W_i$. Moreover
\[
 \sum_i\dim V_i=d\dim_\K V,
 \qquad
 \sum_i\dim W_i=d\dim_\K W.
\]
Thus for some $i$ with $V_i\ne0$ we have $\dim W_i/\dim V_i\le\dim_\K W/\dim_\K V$. Taking $V_0=V_i$ proves the claim.
\end{proof}

\subsection{Transfer}
We next record the transfer statement needed in the argument.

\begin{prop}[Module-coalgebra transfer]
\label{prop:module-coalgebra-transfer}
Let $A\leq H$ and $C\leq M$ be finite-dimensional subcoalgebras, with $A$ cocommutative.  Consider $U\leq C$ and $t\in\mathbb R$, and suppose that $(U,C)$ is $t$-semistable. Then $(AU,AC)$ is $t$-semistable.
\end{prop}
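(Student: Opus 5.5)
The plan is to reduce first to the case where $A$ has a complete flag by subcoalgebras, and then to induct along that flag, one codimension at a time.

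\emph{Reduction to a split coalgebra.} By Lemma~\ref{lem:coalgebra-splitting-field}, choose a finite extension $\K/\Bbbk$ such that $A_\K$ has a complete flag by subcoalgebras. By Lemma~\ref{lem:semistability-base-change}, $(U_\K,C_\K)$ is $t$-semistable. Moreover $A_\K U_\K=\K\otimes AU$ and $A_\K C_\K=\K\otimes AC$, inside the $H_\K$-module coalgebra $M_\K$.

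Conversely, suppose $(\K\otimes AU,\K\otimes AC)$ is $t$-semistable over $\K$. Let $B\le AC$ be a $\Bbbk$-subcoalgebra. Then $\K\otimes B$ is a $\K$-subcoalgebra, and $(\K\otimes AU)\cap(\K\otimes B)=\K\otimes(AU\cap B)$. Every dimension in \eqref{eq:SS} is therefore multiplied by $[\K:\Bbbk]$, and $t$-semistability descends. So we may assume that
\[
0=A_0<A_1<\dots<A_n=A
\]
is a complete flag by subcoalgebras.

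\emph{Induction along the flag.} We show that $(A_kU,A_kC)$ is $t$-semistable for every $k\ge1$. For $k=1$, $A_1$ is a one-dimensional subcoalgebra, hence spanned by a group-like $g$, so $\Delta g=g\otimes g$ and $\varepsilon(g)=1$. Since $H$ is a Hopf algebra, $g$ is invertible, and $m\mapsto gm$ is a coalgebra automorphism of $M$. It carries $(U,C)$ to $(gU,gC)$ and subcoalgebras of $gC$ bijectively to subcoalgebras of $C$, so semistability transfers.

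For the inductive step, set $A'=A_{k+1}$ and choose $x\in A'\setminus A_k$. Since $A'/A_k$ is a one-dimensional quotient coalgebra, it is spanned by a group-like, and we may normalize $x$ so that
\[
\Delta x\equiv x\otimes x \pmod{A_k\otimes A'+A'\otimes A_k}.
\]
Then $A'C=A_kC+xC$, and the map
\[
\psi\colon C\to L\coloneqq A'C/A_kC,\qquad c\mapsto xc \bmod A_kC,
\]
is surjective. By the congruence for $\Delta x$ and the module-coalgebra axiom, $\psi$ is compatible with the coalgebra structures modulo $A_kC$, which is a subcoalgebra.

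Now let $B\le A'C$ be a subcoalgebra and put $B_0=B\cap A_kC$, a subcoalgebra of $A_kC$. Define
\[
B''=\psi^{-1}\bigl((B+A_kC)/A_kC\bigr)\le C.
\]
We will show that $B''$ is a subcoalgebra of $C$. Then we apply the inductive hypothesis to $B_0\le A_kC$, and \eqref{eq:SS} for $(U,C)$ to $B''\le C$, and add the two inequalities. The dimension bookkeeping should go as follows:
\begin{align*}
\dim A'C-\dim B&=(\dim A_kC-\dim B_0)+(\dim L-\dim\psi(B''))\\
&\le(\dim A_kC-\dim B_0)+(\dim C-\dim B''),
\end{align*}
because $\ker\psi\le B''$. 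On the $U$ side, we need
\[
\dim A'U-\dim(A'U\cap B)\ \ge\ \bigl(\dim A_kU-\dim(A_kU\cap B_0)\bigr)+\bigl(\dim U-\dim(U\cap B'')\bigr)
\]
up to the same kernel correction. This uses the filtration $A_kU\le A'U$, whose top layer is $\psi(U)$, together with $\psi(U\cap B'')\supseteq$ the image of $A'U\cap B$ in that layer. For $t\ge0$ the two corrections are compatible; the case $t<0$ is trivial since \eqref{eq:SS} then holds automatically.

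\emph{Main obstacle.} The delicate point is the kernel correction: $\ker\psi$ may meet $U$. In that case we would need to count $\dim(U\cap\ker\psi)$ on both sides, and this may require replacing $B''$ by $B''+\ker\psi$ or applying \eqref{eq:SS} to $\ker\psi$ itself. A second point needing care is proving that $B''$ is a genuine subcoalgebra, using the group-like congruence for $x$ and the fact that $A_kC$ is a subcoalgebra. If $\ker\psi$ turns out not to be a subcoalgebra, I would instead run the argument through Lemma~\ref{lem:tensor-semistability}: take $W$ to be the layer $A'/A_k$ (or $A'$ itself), and $Z$ the preimage of $B$ under the action map $A'\otimes C\to A'C$. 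This $Z$ is stable under both coactions, and that lemma handles the telescoping without needing a subcoalgebra kernel.
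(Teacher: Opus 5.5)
Your skeleton is the paper's: base change to a splitting field, induction along the flag, splitting off $B\cap A_kC$, and your $B''=\{c\in C: xc\in B+A_kC\}$ is exactly the kernel the paper uses. The bookkeeping closes with no ``kernel correction''. First, $\dim A_{k+1}C-\dim(B+A_kC)=\dim C-\dim B''$ holds exactly. Second, the surjection $U\to(B+A_{k+1}U)/(B+A_kU)$, $u\mapsto xu$, has kernel inside $U\cap B''$, which gives the $U$-side inequality directly.

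The genuine gap is the step you defer: that $B''$ is a subcoalgebra of $C$. Without it \eqref{eq:SS} cannot be applied to $B''$, and the tool you propose for it is wrong. Because $\varepsilon$ does not vanish on the nonzero subcoalgebra $A_k$, the layer $A_{k+1}/A_k$ is not a counital quotient coalgebra, and its induced comultiplication can be zero. Take $A_k=\Bbbk1<A_{k+1}=\Bbbk1\oplus\Bbbk x$ with $x$ primitive, which is precisely the Lie case. Then $\Delta x\equiv0$ modulo $A_k\otimes A_{k+1}+A_{k+1}\otimes A_k$, and no rescaling or shift of $x$ achieves $\Delta x\equiv x\otimes x$. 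Even where such a two-sided congruence holds, it only gives $(\bar\psi\otimes\bar\psi)\Delta(c)=0$ for $\bar\psi\colon C\to A_{k+1}C/(B+A_kC)$. That says $\Delta(c)\in B''\otimes C+C\otimes B''$, which is not the subcoalgebra condition.

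The missing idea is one-sided control by a genuine group-like element of $H$. Take $\ell\colon A_{k+1}\to\Bbbk$ vanishing on $A_k$ with $\ell(x)=1$. Then $g=(\ell\otimes\id)\Delta(x)$ is group-like and $\Delta x-x\otimes g\in A_k\otimes A_{k+1}$; cocommutativity also gives $\Delta x-g\otimes x\in A_{k+1}\otimes A_k$. For $c\in B''$, apply $q\otimes\id$ to $\Delta(xc)$, where $q$ is the quotient by the subcoalgebra $B+A_kC$. This kills the terms with first factor in $A_k$ and yields $\sum\bar\psi(c_{(1)})\otimes gc_{(2)}=0$. Since $m\mapsto gm$ is injective ($S(g)g=1$), this gives $\Delta(c)\in B''\otimes C$. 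Symmetrically $\Delta(c)\in C\otimes B''$, and the intersection of these two spaces is $B''\otimes B''$.

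Your fallback through Lemma~\ref{lem:tensor-semistability} does not work either. The regular $C$-coactions on $A_{k+1}\otimes C$ ignore the coproduct of $A_{k+1}$, so the preimage $Z$ of $B$ under the action map need not be stable under them. For instance, take $\Bbbk X$ with $hx_2=x_1$, $A_{k+1}=\Bbbk\{1,h\}$, $C=\Bbbk\{x_1,x_2\}$ and $B=0$. The element $1\otimes x_1-h\otimes x_2$ lies in $Z$, but its image under $\rho$ lies outside $Z\otimes C$, because $1\otimes x_1\notin Z$.
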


\begin{proof}
We first prove the assertion under the additional assumption that $A$ admits a complete flag by subcoalgebras
\[
 0=A_n<A_{n-1}<\cdots<A_1<A_0=A,
 \qquad
 \dim A_i=\dim A_{i+1}+1.
\]

Consider first a single codimension-one step
\[
 A'\leq A,
 \qquad
 \dim A=\dim A'+1.
\]
Choose a linear functional $\ell\colon A\to \Bbbk$ and $a\in A$ such that
\[
 \ker\ell=A',
 \qquad
 \ell(a)=1.
\]
Set
\[
 g=(\ell\otimes\id)\Delta(a).
\]
The usual codimension-one coalgebra argument gives
\begin{equation} \label{eq:delta-a-right}
 \Delta(a)-a\otimes g\in A'\otimes A,
\end{equation}
and $g$ is group-like. Indeed consider $T=(\ell\otimes\id)\Delta\colon A\to A$; then $T(A')=0$, so for $x\in A$ we have $x-\ell(x)a\in A'$ and therefore $T(x)=T(x-\ell(x)a)+T(\ell(x)a)=\ell(x)g$. We then verify
\[\varepsilon(g)=(\ell\otimes\varepsilon)\Delta(a)=\ell(a)=1\]
and
\begin{align*}
\Delta(g)
 &= (\ell\otimes\id\otimes\id)(\id\otimes\Delta)\Delta(a)\\
 &= (\ell\otimes\id\otimes\id)(\Delta\otimes\id)\Delta(a)
  = (T\otimes\id)\Delta(a)=g\otimes g.
\end{align*}

Since $A$ is cocommutative, we also have
\begin{equation}\label{eq:delta-a-left}
 \Delta(a)-g\otimes a\in A\otimes A'.
\end{equation}
Moreover, the action of $g$ on $M$ is injective, since $S(g)$ is a left inverse to the action of $g$.

Fix a subcoalgebra $D\leq AC$, and put
\[
 V=D+A'C.
\]
Since the action $A\otimes C\to M$ is a coalgebra map, $A'C$ is a subcoalgebra of $M$, and therefore so is $V$.

Define
\[
 \phi\colon C\to AC/V,
 \qquad
 c\mapsto ac+V.
\]
This map is surjective, because $A=A'+\Bbbk a$.  Put
\[
 B=\ker\phi.
\]
We claim that $B$ is a subcoalgebra of $C$. Indeed, consider $c\in B$. Then $ac\in V$.  If
\[
 q\colon AC\rightarrow AC/V
\]
is the quotient map, then $(q\otimes\id)\Delta(ac)=0$. Using the module-coalgebra identity and~\eqref{eq:delta-a-right}, all terms whose first $A$-factor lies in $A'$ disappear modulo $V$, and we obtain
\[
 \sum \phi(c_{(1)})\otimes gc_{(2)}=0.
\]
Since multiplication by the group-like element $g$ is injective on $M$, and tensoring over a field preserves injections, $(\phi\otimes\id)\Delta(c)=0$, so $\Delta(c)\in B\otimes C$. Applying instead $\id\otimes q$ to $\Delta(ac)$, and using~\eqref{eq:delta-a-left}, gives similarly $\Delta(c)\in C\otimes B$. Over a field,
\[
 (B\otimes C)\cap(C\otimes B)=B\otimes B,
\]
inside $C\otimes C$.  Thus $\Delta(B)\subseteq B\otimes B$, so $B$ is a subcoalgebra.

By the rank-nullity formula for $\phi$,
\begin{equation} \label{eq:module-step-denominator}
 \dim AC-\dim(D+A'C)
 =
 \dim C-\dim B.
\end{equation}

Now consider
\[
 \psi\colon U\to (D+AU)/(D+A'U),
 \qquad
 u\mapsto au.
\]
Again this is surjective.  If $u\in\ker\psi$, then
\[
 au\in D+A'U\subseteq D+A'C,
\]
and hence $u\in B$.  Therefore $\ker\psi\leq U\cap B$, and consequently
\begin{equation} \label{eq:module-step-numerator}
 \dim(D+AU)-\dim(D+A'U)
 \geq
 \dim U-\dim(U\cap B).
\end{equation}
Combining \eqref{eq:SS}, \eqref{eq:module-step-denominator}, and~\eqref{eq:module-step-numerator}, we obtain the codimension-one
estimate
\begin{equation} \label{eq:module-codim-one}
 t\bigl(\dim AC-\dim(D+A'C)\bigr)
 \leq
 \dim(D+AU)-\dim(D+A'U).
\end{equation}

We now iterate this estimate along a complete subcoalgebra flag in $A$.  If $A'<A$ is the first proper member of the flag, apply~\eqref{eq:module-codim-one} to $A'<A$, and apply the induction hypothesis to
\[
 D'=D\cap A'C.
\]
The two left-hand sides telescope because
\[
 \begin{aligned}
 &\bigl[\dim AC-\dim(D+A'C)\bigr]
   +\bigl[\dim A'C-\dim(D\cap A'C)\bigr] \\
 &\hspace{35mm}=\dim AC-\dim D,
 \end{aligned}
\]
by the modular dimension identity.  Likewise, since $A'U\leq A'C$,
\[
 A'U\cap(D\cap A'C)=A'U\cap D,
\]
and
\[
 \begin{split}
 \bigl[\dim(D+AU)-\dim(D+A'U)\bigr]+\bigl[\dim A'U-\dim(A'U\cap D)\bigr]\\
   =\dim AU-\dim(AU\cap D).
 \end{split}
\]
This proves that $(AU,AC)$ is $t$-semistable whenever $A$ admits a complete subcoalgebra flag.

It remains to remove the complete-flag assumption. Choose a finite extension $\K/\Bbbk$ as in Lemma~\ref{lem:coalgebra-splitting-field}, so that $A_\K$ has a complete flag by subcoalgebras. By Lemma~\ref{lem:semistability-base-change}, the semistability hypothesis \eqref{eq:SS} holds for $C_\K$ and $U_\K$. We may therefore apply the complete-flag case over $\K$ to the scalar extensions of $A,C,U,D$. Scalar extension preserves the relevant dimensions, products and intersections (for instance $(AU\cap D)_\K=A_\K U_\K\cap D_\K$), so the resulting inequality over $\K$ is exactly $t$-semistability of $(AU,AC)$ over $\Bbbk$.
\end{proof}

\begin{proof}[Proof of Theorem~\ref{thm:module-coalgebra-rounding}]
Put $A=\Bbbk1+F$. Then $A$ is a finite-dimensional cocommutative subcoalgebra of $H$, and $AV=V+FV$ for every subspace $V\leq M$.

By the fundamental theorem of coalgebras, there exists a finite-dimensional subcoalgebra $G\leq M$ containing $E$. For $t\ge0$, let $C_t=C_t(E)\leq G$ be the largest subcoalgebra maximizing
\[
 \Phi_{E,t}(C)\coloneqq\dim(E\cap C)-t\dim C.
\]
The largest maximizer exists because the function $C\mapsto\dim(E\cap C)$ is supermodular, whereas dimension is modular, and subcoalgebras are closed under sums and intersections so Lemma~\ref{lem:density-coarea} applies.

Put $U_t=E\cap C_t$. The defining maximality of $C_t$ gives the semistability inequality
\begin{equation} \label{eq:density-semistability}
 t\bigl(\dim C_t-\dim B\bigr)
 \leq
 \dim U_t-\dim(U_t\cap B)
\end{equation}
for every subcoalgebra $B\leq C_t$.

Now consider the corresponding density filtration for $AE$ inside $AG\leq M$. Let $D_t\coloneqq C_t(AE)$ denote its largest maximizer at parameter $t$. We claim that
\begin{equation} \label{eq:density-functoriality}
 AC_t\leq D_t.
\end{equation}
Indeed, put $C'=AC_t$ and $U'=AU_t$.  By Proposition~\ref{prop:module-coalgebra-transfer}, the pair $(U',C')$ is $t$-semistable.  Assume first that $t>0$, and on the lattice of subcoalgebras of $AG$ consider the two rank functions
\[
 r_0(B)=\dim(U'\cap B),
 \qquad
 r_1(B)=\dim(AE\cap B).
\]
The largest maximizer associated with $r_0$ is $C'$.  Indeed, for any subcoalgebra $B\leq AG$, replacing $B$ by $B\cap C'$ leaves $r_0(B)$ unchanged and, unless $B\leq C'$, strictly decreases its dimension; while $t$-semistability says precisely that
\[
 r_0(B)-t\dim B\leq r_0(C')-t\dim C'
 \qquad(B\leq C').
\]
Moreover, $r_1-r_0$ is monotone, since
\[
 r_1(B)-r_0(B)
 =\dim\operatorname{im}(AE\cap B\longrightarrow AE/U'),
\]
and these images increase with $B$.  Lemma~\ref{lem:density-comparison} therefore gives $C'\leq D_t$.  For $t=0$ the same conclusion is immediate, since $D_0=AG$.  This proves \eqref{eq:density-functoriality}.

It remains only to average over the density filtration. Apply Lemma~\ref{lem:density-coarea} to the lattice of subcoalgebras of $G$, with
\[
r(C)=\dim(E\cap C).
\]
Since $E\le G$, the lemma gives
\begin{equation}\label{eq:source-mass}
\int_0^\infty\dim C_t(E)\,dt=\dim E.
\end{equation}
Applying the same lemma to $AE\le AG$ gives
\[
\int_0^\infty\dim D_t\,dt=\dim AE.
\]
By \eqref{eq:density-functoriality}, we have $\dim AC_t\leq\dim D_t$,
and therefore
\begin{equation}\label{eq:expanded-mass}
\int_0^\infty\dim AC_t\,dt\leq\dim AE.
\end{equation}
Together with \eqref{eq:source-mass}, this implies that for some $t$ with $C_t\neq0$,
\[
\frac{\dim AC_t}{\dim C_t}
\leq
\frac{\dim AE}{\dim E}.
\]
Indeed, otherwise the integrand on the left of \eqref{eq:expanded-mass} would be everywhere strictly larger, on the support of $\dim C_t$, than $\frac{\dim AE}{\dim E}\dim C_t$, contradicting \eqref{eq:source-mass} and \eqref{eq:expanded-mass}.

Taking $C=C_t$ and recalling $AC=C+FC$ and $AE=E+FE$, we obtain
\[
 \frac{\dim(C+FC)}{\dim C}
 \leq
 \frac{\dim(E+FE)}{\dim E},
\]
as required.
\end{proof}

%---------------------------------------------------------------
\subsection{Quotients of Hopf-module coalgebras} We apply similar methods to prove that every coalgebra quotient of an amenable module is amenable. Note that the quotient should be counital, so cannot be the $0$ module --- and indeed, the $0$ module is not amenable.

\begin{thm}\label{thm:module-coalgebra-quotient}
Let $H$ be a cocommutative Hopf algebra and let $q\colon M\twoheadrightarrow Q$ be a surjective $H$-linear counital coalgebra morphism between $H$-module coalgebras. If $M$ is amenable, then $Q$ is amenable.
\end{thm}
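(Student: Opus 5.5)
The plan is to go through Theorem~\ref{thm:hopf}: it suffices to show that $Q$ is \emph{algebraically} amenable. Fix a finite-dimensional subcoalgebra $F\le H$ and put $A=\Bbbk1+F$. By amenability of $M$, choose a nonzero finite-dimensional subcoalgebra $C\le M$ with $\dim AC<(1+\epsilon)\dim C$. Let $K=\ker q$; it is an $H$-submodule and a coideal of $M$.

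Two facts are easy. First, $q(AC)=A\,q(C)$. Second, since $\ker q|_C\le\ker q|_{AC}$,
\[
\dim q(AC)-\dim q(C)\le\dim AC-\dim C<\epsilon\dim C.
\]
So the boundary only shrinks under $q$. The whole difficulty is that $\dim q(C)=\dim C-\dim(C\cap K)$ might be a tiny fraction of $\dim C$; this is exactly the phenomenon mentioned before Gromov's assertion.

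To overcome this I will run the density filtration of Lemma~\ref{lem:density-coarea}, now with the kernel as the distinguished subspace. On the lattice of subcoalgebras of $C$ take $r(B)=\dim(B\cap K)$, which is monotone and supermodular. Let $C_t$ be its largest maximizers. Every nonzero subcoalgebra $B$ satisfies $\varepsilon(B)\ne0$ while $\varepsilon(K)=0$, so $B\cap K\ne B$; hence $C_t=0$ for $t\ge1$. The coarea identity then gives
\[
\dim q(C)=\int_0^1(\dim C-\dim C_t)\,dt .
\]
Do the same on the subcoalgebras of $AC$ with $r(B)=\dim(B\cap K)$, and let $D_t$ be the maximizers there. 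This gives
\[
\dim q(AC)=\int_0^1(\dim AC-\dim D_t)\,dt .
\]
Exactly as in the proof of Theorem~\ref{thm:module-coalgebra-rounding}, Proposition~\ref{prop:module-coalgebra-transfer} applied to $U_t=C_t\cap K$ shows that $(AU_t,AC_t)$ is $t$-semistable. Here $AU_t\le K$ because $K$ is a submodule. Lemma~\ref{lem:density-comparison} then yields $AC_t\le D_t$, and therefore
\[
\dim q(AC)\le\int_0^1(\dim AC-\dim AC_t)\,dt .
\]

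Comparing the two integrals, there is a $t\in[0,1)$ with $C_t\ne C$ such that
\[
\frac{\dim AC-\dim AC_t}{\dim C-\dim C_t}\le\frac{\dim q(AC)}{\dim q(C)} .
\]
This is circular as it stands, so I would instead run the comparison against $\dim AC/\dim C$ directly. The aim is a $t$ with
\[
\dim AC-\dim AC_t\le(1+\epsilon')(\dim C-\dim C_t),
\]
obtained by combining the displayed integral identities with the Følner inequality for $C$ and a second coarea integral for the unrestricted rank $\dim B$.

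The candidate Følner subspace of $Q$ is then $E_t=q(C)$, or more precisely the image of $C$ in $M/(K+C_t)$. What must be shown is that the maximality of $C_t$, via semistability, forces $\dim q(C)\ge(1-t)(\dim C-\dim C_t)$. Together with the bound $\dim q(AC)-\dim q(C)\le\dim AC-\dim C$ shown at the start, this should give $\dim A\,q(C)/\dim q(C)\le 1+O(\epsilon)/(1-t)$.

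The main obstacle I expect is controlling $t$ away from $1$, where the factor $1/(1-t)$ blows up. I would try first to split the integral at $t=1-\delta$ and bound the mass on $[1-\delta,1)$ by $\delta\dim C$. Once algebraic amenability of $Q$ is obtained, Theorem~\ref{thm:hopf} upgrades it to coalgebraic amenability.
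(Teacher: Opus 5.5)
Your set-up is sound up to and including the inclusion $AC_t\le D_t$. The kernel-density filtration, the two coarea identities on $[0,1]$, and the use of Proposition~\ref{prop:module-coalgebra-transfer} with Lemma~\ref{lem:density-comparison} all work; $r_1-r_0$ is monotone because $AU_t\le K$. The argument breaks after that, in two places.

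First, the displayed ratio inequality does not follow from comparing the integrals. $AC_t\le D_t$ only gives the lower bound $\int_0^1(\dim AC-\dim AC_t)\,dt\ge\dim q(AC)$, which produces a $t$ with ratio $\ge$, not $\le$, $\dim q(AC)/\dim q(C)$.

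Second, and decisively, your candidate F\o lner spaces are the wrong ones, and no choice of $t$ repairs them. Take $G=\mathbb Z^2$, $M=\Bbbk G$, $Q=\Bbbk\mathbb Z$ via the second coordinate, and $C=\Bbbk A$ with $A=[0,N]^2\cup\{(0,2Nk):1\le k\le N\}$. Then $C$ is F\o lner (boundary $O(N)$ against $N^2$). But $q(C)$ is spanned by $2N+1$ points, $N$ of them isolated, so its boundary is comparable to its dimension. For $0<t\le1-1/(N+1)$ one has $C_t=\Bbbk[0,N]^2$, so the image of $C$ in $M/(K+C_t)$ is spanned by the images of the isolated points only; it is not even a subspace of $Q$. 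The inequality $\dim q(C)\ge(1-t)(\dim C-\dim C_t)$ is true: compare the values of $\dim(B\cap K)-t\dim B$ at $B=C_t$ and $B=C$. It cannot yield $1+O(\epsilon)/(1-t)$, however, because $\dim C-\dim C_t$ may be a negligible fraction of $\dim C$ for all $t$ up to $1-1/(N+1)$ (here $N$ against $N^2$).

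The missing idea is to take the images $q(C_t)$ of the \emph{high}-density pieces, and to integrate in $s=1/(1-t)$ rather than in $t$. Since $\dim(B\cap K)-t\dim B=(1-t)\bigl(\dim B-s\dim q(B)\bigr)$, the space $C_{t(s)}$, with $t(s)=\max(0,1-1/s)$, is the largest maximizer of $\dim B-s\dim q(B)$. The convex function $g(s)=\max_B\{\dim B-s\dim q(B)\}$ then has these properties:
\begin{itemize}
\item its slope is $-\dim q(C_{t(s)})$;
\item $g(0)=\dim C$;
\item it vanishes for large $s$, because $\varepsilon(q(B))=\varepsilon(B)\ne0$ for $B\ne0$.
\end{itemize}
Hence $\int_0^\infty\dim q(C_{t(s)})\,ds=\dim C$, and likewise $\int_0^\infty\dim q(D_{t(s)})\,ds=\dim AC$. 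Since $Aq(C_t)=q(AC_t)\le q(D_t)$, averaging gives a nonzero subcoalgebra $q(C_{t(s)})\le Q$ with $\dim Aq(C_{t(s)})\le\frac{\dim AC}{\dim C}\dim q(C_{t(s)})$.

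Completed this way, your route would be a genuine alternative to the paper's. The paper instead embeds a F\o lner subspace $E$ into $M\otimes Q$ without loss of dimension, through the injective $H$-linear coaction $\rho=(\id\otimes q)\Delta$. It runs the density filtration on subspaces of $Q$ with rank $V\mapsto\dim\bigl(\rho(E)\cap(M\otimes V)\bigr)$, and proves $FC_t(X)\le C_t(Y)$ by a separate group-like flag induction. It then descends from a splitting field by Lemma~\ref{lem:folner-descent}. Your corrected route reuses the transfer proposition, which already handles base change, and outputs subcoalgebras of $Q$ directly. As submitted, however, the proposal stops at a plan whose central step fails.
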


\begin{proof}
By Theorem~\ref{thm:hopf}, it suffices to prove ordinary algebraic amenability of $Q$. Let $F_0\le H$ be finite-dimensional and $\epsilon>0$. Choose a finite-dimensional subcoalgebra $F\le H$ containing $F_0+\Bbbk1$. Since $M$ is amenable, choose a finite-dimensional $E\le M$ with $\dim FE<(1+\epsilon)\dim E$. We shall produce a nonzero finite-dimensional $V\le Q$ with
\[
 \frac{\dim FV}{\dim V}\le\frac{\dim FE}{\dim E}.
\]
This is enough because $1\in F$ and $F_0\le F$.

Consider the right $Q$-coaction $\rho=(\id_M\otimes q)\Delta_M\colon M\to M\otimes Q$. It is injective, since $(\id_M\otimes\varepsilon_Q)\rho=\id_M$, and it is $H$-linear for the diagonal action. Put $X=\rho(E)$ and $Y=\rho(FE)$; thus $X\le Y$, $\dim X=\dim E$ and $\dim Y=\dim FE$.

Assume first that $F$ admits a complete flag $0=F'_0<F'_1<\cdots<F'_d=F$ by subcoalgebras, with $\dim F'_i=i$. Choose a finite-dimensional subspace $W\le Q$ containing all second coefficients occurring in $X$ and $Y$. For a subspace $V\le Q$ set
\[
 r_Z(V)=\dim\bigl(Z\cap(M\otimes V)\bigr),\qquad Z=X,Y.
\]
Replacing $V$ by $V\cap W$ leaves $r_Z(V)$ unchanged and cannot increase its dimension, so every maximizer of $r_Z(V)-t\dim V$ lies in $W$. The identities
\begin{align*}
  (Z\cap (M\otimes V))\cap (Z\cap(M\otimes V')) &= Z\cap(M\otimes(V\cap V')),\\
  (Z\cap (M\otimes V)) + (Z\cap(M\otimes V')) &\le Z\cap(M\otimes(V+V'))
\end{align*}
show that $r_Z$ is supermodular. For $t>0$, let $C_t(Z)$ be the largest maximizer.

We claim that $F C_t(X)\le C_t(Y)$. Fix $t$ and write $C=C_t(X)$ and $D=C_t(Y)$. We prove by induction that $F'_iC\le D$. Suppose $F'_{i-1}C\le D$ and choose $a\in F'_i\setminus F'_{i-1}$. The codimension-one coalgebra argument used in the proof of Proposition~\ref{prop:module-coalgebra-transfer} gives a group-like $g\in F'_i$ such that
\[
 \Delta(a)-a\otimes g\in F'_{i-1}\otimes F'_i,
 \qquad
 \Delta(a)-g\otimes a\in F'_i\otimes F'_{i-1}.
\]
Put $K=\{c\in C:ac\in D\}$, a vector subspace of $C$. Set $D'=D+aC=D+F'_iC$ and let $\delta_a$ denote the diagonal action of $a$ on $M\otimes Q$. Since $\rho$ is $H$-linear, $\delta_a(X)\subseteq Y$, and modulo $M\otimes D$ the second triangular relation gives $\delta_a z\equiv(L_g\otimes L_a)z$ for $z\in M\otimes C$. As $L_g$ is invertible, $\delta_a$ induces an injection
\[
 \frac{X\cap(M\otimes C)}{X\cap(M\otimes K)}
 \hookrightarrow
 \frac{Y\cap(M\otimes D')}{Y\cap(M\otimes D)}.
\]
Hence $r_Y(D')-r_Y(D)\ge r_X(C)-r_X(K)\ge t(\dim C-\dim K)$, the last inequality coming from maximality of $C$. Rank--nullity for $C\to Q/D$, $c\mapsto ac+D$, gives $\dim D'-\dim D=\dim C-\dim K$. Therefore $r_Y(D')-t\dim D'\ge r_Y(D)-t\dim D$. Since $D$ is the largest maximizer, $D'\le D$, and hence $F'_iC\le D$. This proves the claim.

Apply Lemma~\ref{lem:density-coarea} to the lattice of subspaces of $W$. Since $r_Z(W)=\dim Z$,
\[
 \int_0^\infty\dim C_t(X)\,dt=\dim X,
 \qquad
 \int_0^\infty\dim C_t(Y)\,dt=\dim Y.
\]
The pointwise inclusion gives $\dim F C_t(X)\le\dim C_t(Y)$. Consequently, for some $t>0$ with $C_t(X)\ne0$,
\[
 \frac{\dim F C_t(X)}{\dim C_t(X)}
 \le\frac{\dim Y}{\dim X}
 =\frac{\dim FE}{\dim E}.
\]
This proves the assertion when $F$ has a complete subcoalgebra flag.

For arbitrary $\Bbbk$, choose a finite extension $\K/\Bbbk$ by Lemma~\ref{lem:coalgebra-splitting-field}. Apply the split case to the scalar extensions of $q$ and $E$; it gives a nonzero finite-dimensional $\K$-subspace $V\le Q_\K$ such that
\[
 \frac{\dim_\K F_\K V}{\dim_\K V}
 \le\frac{\dim FE}{\dim E}.
\]
Lemma~\ref{lem:folner-descent} then produces a nonzero finite-dimensional $V_0\le Q$ with no larger $F$-expansion ratio. Thus $Q$ is algebraically amenable, and Theorem~\ref{thm:hopf} finishes the proof.
\end{proof}

%---------------------------------------------------------------
\section{Amenability of Hopf algebras}\label{ss:hopfamen}

By Theorem~\ref{thm:hopf}, a cocommutative Hopf algebra is amenable if and only if its left regular module is algebraically amenable. We first record the module-theoretic descent argument needed below.

\subsection{Elementary operations on Hopf algebras}

\begin{lem}[Projective-module descent]\label{lem:projective-module-descent}
Let $R$ be an associative algebra and let $Q$ be a projective left $R$-module. If $Q$ is algebraically amenable qua left $R$-module, then $R$ is algebraically amenable.
\end{lem}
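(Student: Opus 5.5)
The plan is to reduce to a free module of finite rank and then peel off one coordinate at a time, as in the proof of Lemma~\ref{lem:folner-descent}. Since $Q$ is projective, it is a direct summand of a free module, so we may regard $Q$ as an $R$-submodule of $R^{(I)}$ for some index set $I$. In fact we only use that $Q$ embeds in a free module; the complement plays no role.

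Fix a finite-dimensional $F\le R$ and $\epsilon>0$. By algebraic amenability of $Q$, choose a nonzero finite-dimensional $E\le Q$ with $\dim(E+FE)<(1+\epsilon)\dim E$. Because $E$ is finite-dimensional, its elements involve only finitely many coordinates, so $E\le R^J$ for a finite $J\subseteq I$; write $J=\{1,\dots,n\}$. The submodule $R^J$ is $R$-stable, so $W\coloneqq E+FE$ also lies in $R^n$, and $W$ is the same space whether computed in $Q$ or in $R^n$.

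Next we filter by coordinates. Put $E^{(i)}=E\cap(R^i\oplus0)$ and $W^{(i)}=W\cap(R^i\oplus0)$. The $i$-th coordinate projection identifies $E^{(i)}/E^{(i-1)}$ with a subspace $E_i\le R$, and $W^{(i)}/W^{(i-1)}$ with $W_i\le R$. Then
\[\sum_i\dim E_i=\dim E,\qquad \sum_i\dim W_i=\dim W.\]

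The key compatibility is $E_i+FE_i\le W_i$. To see this, take $e\in E^{(i)}$ and $f\in F$. Since $R^i\oplus0$ is a submodule, both $e$ and $fe$ lie in $W\cap(R^i\oplus0)=W^{(i)}$. Their $i$-th coordinates are $e_i$ and $fe_i$, because the action is coordinatewise.

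Finally we average. Not all $E_i$ vanish, since $E\neq0$. If $\dim W_i>(1+\epsilon)\dim E_i$ held for every $i$ with $E_i\ne0$, then summing over $i$ would give $\dim W\ge(1+\epsilon)\dim E$, which is false. So some $i$ has $E_i\ne0$ and
\[\dim(E_i+FE_i)\le\dim W_i<(1+\epsilon)\dim E_i.\]
Thus $E_i$ is a Følner subspace of ${}_RR$, and $R$ is algebraically amenable.

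The only point needing care is the compatibility $E_i+FE_i\le W_i$, which rests on the filtration pieces $R^i\oplus0$ being submodules. Everything else is the bookkeeping already carried out in Lemma~\ref{lem:folner-descent}.
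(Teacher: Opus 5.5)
Your argument is correct and is essentially the paper's proof. Both embed $Q$ in a free module, restrict to the finitely many coordinates supporting $E+FE$, filter by the coordinate submodules $R^i\oplus0$, identify the graded pieces with subspaces of $R$ (with $E_i+FE_i\le W_i$ because these are submodules), and average dimensions.

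One small slip is in the averaging step. Refuting $\dim W_i>(1+\epsilon)\dim E_i$ only gives $\le$, not the strict inequality you state. You should instead refute $\dim W_i\ge(1+\epsilon)\dim E_i$ for all $i$ with $E_i\neq0$. Alternatively, note as the paper does that some $i$ with $E_i\neq0$ has $\dim W_i/\dim E_i\le\dim W/\dim E<1+\epsilon$.
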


\begin{proof}
Let $F\leq R$ be finite-dimensional and let $0\neq E\leq Q$ be finite-dimensional. Since $Q$ is projective, there is an injective left $R$-linear map
\[
\iota\colon Q\longrightarrow R^{(I)}
\]
for some index set $I$. The finite-dimensional space $\iota(E+FE)$ is supported on finitely many coordinates. After renumbering them, put
\[
W=\iota(E)\leq R^r,
\qquad
W^+=\iota(E+FE)=W+FW\leq R^r.
\]
Set
\[
P_j=Re_1\oplus\cdots\oplus Re_j
\qquad(0\leq j\leq r),
\]
and, using $P_j/P_{j-1}\cong R$ qua left $R$-modules, regard
\[
V_j=\frac{(W\cap P_j)+P_{j-1}}{P_{j-1}},
\qquad
V_j^+=\frac{(W^+\cap P_j)+P_{j-1}}{P_{j-1}}
\]
as finite-dimensional subspaces of $R$. Since the $P_j$ are left $R$-submodules, we have $V_j+FV_j\subseteq V_j^+$. Moreover,
\[
\dim E=\dim W=\sum_{j=1}^r\dim V_j,
\qquad
\dim(E+FE)=\dim W^+=\sum_{j=1}^r\dim V_j^+.
\]
Consequently, for some $j$ with $V_j\neq0$,
\[
\frac{\dim(V_j+FV_j)}{\dim V_j}
\leq
\frac{\dim(E+FE)}{\dim E}.
\]
Applying this to arbitrarily good Følner subspaces $E\leq Q$ proves the lemma.
\end{proof}

\begin{thm}[Hopf subalgebras]\label{thm:hopf-subalgebra}
Let $K\leq H$ be a Hopf subalgebra of a cocommutative Hopf algebra $H$. If $H$ is amenable, then $K$ is amenable.
\end{thm}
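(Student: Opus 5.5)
The plan is to combine Lemma~\ref{lem:projective-module-descent}, with $R=K$ and $Q={}_K H$ the restriction of the regular module, with Theorem~\ref{thm:hopf}. By Theorem~\ref{thm:hopf}, $H$ amenable means that ${}_H H$ is algebraically amenable. Restricting the test spaces to finite-dimensional $F\le K\le H$, the same Følner subspaces $E\le H$ witness that $H$, viewed as a left $K$-module, is algebraically amenable: the condition $\dim(E+FE)<(1+\epsilon)\dim E$ involves only the action of elements of $F$. Once we know ${}_K H$ is projective, Lemma~\ref{lem:projective-module-descent} shows that $K$ is algebraically amenable as a left module over itself. Theorem~\ref{thm:hopf}, applied to the Hopf-module coalgebra ${}_K K$, then upgrades this to amenability of $K$.

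The only substantive step is projectivity, and in fact freeness, of $H$ as a left $K$-module. For cocommutative Hopf algebras this is a known result: over a field, a cocommutative Hopf algebra is faithfully flat over any Hopf subalgebra, and indeed free (Takeuchi; see also the Nichols--Zoeller and Radford/Masuoka line of results cited in~\cite{montgomery:haaar}). I would simply quote it. If a self-contained route were wanted, I would first pass to $\bar\Bbbk$, since freeness descends appropriately for our purpose via Lemma~\ref{lem:folner-descent}-type arguments. There the Cartier--Gabriel--Kostant decomposition gives $H\cong H_1\# \Bbbk G(H)$ with $H_1$ the irreducible component, and similarly for $K$. Freeness then reduces to two facts: a group ring is free over the group ring of a subgroup (via coset representatives), and a connected cocommutative Hopf algebra is free over a connected Hopf subalgebra. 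The second follows from a PBW-type basis (in characteristic $0$, $U(\mathfrak g)$ over $U(\mathfrak h)$).

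I expect this freeness input to be the main obstacle, and I would not reprove it but cite it. A small point to check is that the descent lemma needs only projectivity and uses nothing about the coalgebra structure, so no compatibility between the embedding $H\hookrightarrow K^{(I)}$ and coproducts is required. That is exactly why the argument is routed through algebraic amenability and Theorem~\ref{thm:hopf} rather than working with subcoalgebras directly.
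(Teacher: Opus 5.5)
Your argument is the paper's proof: restrict the algebraically amenable module ${}_HH$ to $K$, apply Lemma~\ref{lem:projective-module-descent}, and return to amenability via Theorem~\ref{thm:hopf}, with projectivity of ${}_KH$ as the only external input. The one thing to fix is the justification of that input. The paper gets projectivity from Takeuchi's faithful flatness combined with the Masuoka--Wigner theorem (faithful flatness plus a bijective, here involutive, antipode gives a projective generator). The sources you name do not by themselves give freeness over an arbitrary field: Takeuchi gives only faithful flatness, Nichols--Zoeller is finite-dimensional, and Radford needs $H$ pointed. Projectivity, however, is all the descent lemma requires.
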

\begin{proof}
Takeuchi proved that $H$ is faithfully flat qua left and right $K$-module~\cite{takeuchi:correspondence}*{Theorem~3.1}. The antipodes of $H$ and $K$ are involutive, since these Hopf algebras are cocommutative. The Masuoka--Wigner theorem~\cite{masuoka-wigner:flatness}*{Theorem~2.1} therefore implies that ${}_K H$ is a projective generator, and in particular a projective left $K$-module.

By Theorem~\ref{thm:hopf}, the left regular $H$-module is algebraically amenable. Restricting its action to $K$ shows that $H$ is algebraically amenable qua left $K$-module. Lemma~\ref{lem:projective-module-descent} then shows that $K$ is algebraically amenable, and Theorem~\ref{thm:hopf} shows that $K$ is amenable.
\end{proof}

\begin{thm}[Unions]\label{thm:union}
  Let $H=\bigcup_{i\in I}H_i$ be a directed union of amenable Hopf algebras. Then $H$ is amenable.
\end{thm}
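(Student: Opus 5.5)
We will prove algebraic amenability of the left regular module ${}_H H$ and then invoke Theorem~\ref{thm:hopf} to get amenability. The argument reduces to a single $H_i$ and then induces the Følner set up to $H$.

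Fix a finite-dimensional $F\le H$ and $\epsilon>0$. Since $F$ is finite-dimensional and the union is directed, there is some index $i$ with $F\le H_i$. Because $H_i$ is amenable, its regular module is algebraically amenable by Theorem~\ref{thm:hopf}, or directly by definition, since a subcoalgebra is in particular a subspace. Hence there is a finite-dimensional $0\ne E\le H_i$ with
\[
\dim(E+FE)<(1+\epsilon)\dim E.
\]
Now regard $E$ as a subspace of $H$. The action of $F$ on $E$ computed in $H$ agrees with the action computed in $H_i$, because $H_i$ is a subalgebra of $H$. So $E+FE$ is the same subspace in both, and the same inequality holds in $H$.

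Thus ${}_HH$ is algebraically amenable, and Theorem~\ref{thm:hopf} applied to the Hopf-module coalgebra ${}_HH$ shows that $H$ is amenable. Alternatively, one can avoid Theorem~\ref{thm:hopf} altogether. Take $E$ to be a subcoalgebra of $H_i$, which is possible because $H_i$ is amenable in the Hopf sense. A subcoalgebra of the Hopf subalgebra $H_i$ is also a subcoalgebra of $H$, so $E$ is already a Følner subcoalgebra of $H$.

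There is no real obstacle here. The one point to check is that a subcoalgebra of a Hopf subalgebra $H_i$ is a subcoalgebra of $H$, which holds because the comultiplication of $H_i$ is the restriction of that of $H$. Cocommutativity is needed only if we route the argument through Theorem~\ref{thm:hopf}; the direct coalgebraic argument above does not use it.
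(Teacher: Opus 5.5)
Your proposal is correct, and your ``alternative'' direct argument is exactly the paper's proof: choose $i$ with $F\le H_i$, take an almost-invariant finite-dimensional subcoalgebra $E\le H_i$, and note that it is also a subcoalgebra of $H$ with the same $F$-expansion. Your primary route through Theorem~\ref{thm:hopf} also works, but it is an unnecessary detour and additionally requires cocommutativity.
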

\begin{proof}
  For $F\le H$ finite dimensional and $\epsilon>0$, let $i\in I$ be such that $F\le H_i$. Since $H_i$ is amenable, there exists a finite-dimensional subcoalgebra $E\le H_i$ with $\dim(E+F E)<(1+\epsilon)\dim E$. Viewing $E$ as a subcoalgebra of $H$ shows that $H$ is amenable.
\end{proof}

For a Hopf algebra $H$ and a finite-dimensional subspace $F\le H$, the corresponding ``balls'' in $H$ are defined by
\[B_0=\Bbbk1\text{ and }B_{n+1}=B_n+F B_n\text{ for }n\ge0.\]
If $H$ is generated by $F$, its \emph{growth} is the function $n\mapsto\dim B_n$, and $H$ has \emph{subexponential growth} if $\limsup(\dim B_n)^{1/n}=1$. In general, $H$ has \emph{locally subexponential growth} if $\limsup(\dim B_n)^{1/n}=1$ for every finite-dimensional $F\le H$, namely every finitely generated subalgebra has subexponential growth. Note that if $F$ is a subcoalgebra then every ball $B_n$ is also a subcoalgebra.

\begin{thm}[Subexponential growth]\label{thm:subexp}
  Let $H$ be a Hopf algebra locally of subexponential growth. Then $H$ is amenable.
\end{thm}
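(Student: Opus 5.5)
Given a finite-dimensional $F\le H$ and $\epsilon>0$, the plan is to use the balls $B_n$ of $F$ themselves as Følner sets. The classical subexponential-growth argument then produces almost-invariant finite-dimensional subspaces, and we either observe that they are already subcoalgebras or invoke Theorem~\ref{thm:hopf} to round them.

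First we reduce to the case where $F$ is a finite-dimensional subcoalgebra containing $\Bbbk1$. By the fundamental theorem of coalgebras~\cite{sweedler:ha}*{Theorem~2.2.1}, $F+\Bbbk1$ is contained in a finite-dimensional subcoalgebra $F_1\le H$, and almost-invariance for $F_1$ implies almost-invariance for $F$. With this choice $B_0=\Bbbk1$ is a subcoalgebra, since $1$ is group-like. If $B_n$ is a subcoalgebra, then so is $B_{n+1}=B_n+F_1B_n$: the multiplication $H\otimes H\to H$ is a coalgebra map, so the image $F_1B_n$ of the subcoalgebra $F_1\otimes B_n$ is a subcoalgebra, and sums of subcoalgebras are subcoalgebras. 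This is the remark made just before the statement. Hence every ball is a finite-dimensional subcoalgebra of ${}_HH$, and no appeal to Theorem~\ref{thm:module-coalgebra-rounding} is needed. Alternatively, one could take arbitrary almost-invariant subspaces and round them by Theorem~\ref{thm:module-coalgebra-rounding}.

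Next comes the counting step. Since $1\in F_1$, the balls increase, and $B_n+F_1B_n=B_{n+1}$, so
\[\frac{\dim(B_n+F_1B_n)}{\dim B_n}=\frac{\dim B_{n+1}}{\dim B_n}.\]
Suppose, for contradiction, that this ratio is at least $1+\epsilon$ for all $n\ge N$. Then $\dim B_n\ge(1+\epsilon)^{n-N}\dim B_N$, so $\limsup(\dim B_n)^{1/n}\ge1+\epsilon>1$. This contradicts local subexponential growth, applied to the finite-dimensional space $F_1$ (equivalently, to the finitely generated subalgebra it generates, whose balls are exactly the $B_n$). Hence some $n$ satisfies $\dim(B_n+F_1B_n)<(1+\epsilon)\dim B_n$, and $E=B_n$ is the required subcoalgebra. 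Here $B_n\ne0$ because it contains $1$.

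There is no serious obstacle. The only points to check are that the hypothesis of local subexponential growth is stated for arbitrary finite-dimensional $F$, so that it applies to the enlarged $F_1$, and that the balls are subcoalgebras. The latter uses only that $H$ is a bialgebra, so cocommutativity is not needed here. For the parenthetical claim in the main theorem about commutative Hopf algebras, note that a finitely generated commutative algebra is a quotient of a polynomial ring, so it has polynomial and hence subexponential growth.
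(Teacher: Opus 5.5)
Your proof is correct and follows the paper's argument essentially verbatim. You enlarge $F$ to a finite-dimensional subcoalgebra containing $1$, observe that the balls $B_n$ are then subcoalgebras, and use $\limsup(\dim B_n)^{1/n}=1$ to find some $n$ with $\dim B_{n+1}<(1+\epsilon)\dim B_n$; your contradiction argument just spells out a step the paper states in one line.
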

\begin{proof}
  Consider $F\le H$ finite-dimensional and $\epsilon>0$, and let $C\le H$ a finite-dimensional subcoalgebra containing $1$ and $F$. Replace $H$ by the subalgebra generated by $C$. By assumption, it has subexponential growth. Let $B_n$ be its balls. From $\limsup(\dim B_n)^{1/n}=1$ it follows that there exists $n$ with $\dim B_{n+1}/\dim B_n<1+\epsilon$, so $E=B_n$ witnesses $\dim(E+F E)<(1+\epsilon)\dim E$.
\end{proof}
This applies in particular to $H$ a commutative Hopf algebra, since then $\dim B_n\le\binom{\dim C+n-1}{n}$.

\subsection{Extensions of Hopf algebras}
For us, a \emph{cleft exact sequence} means an exact sequence
\[
\Bbbk\longrightarrow A\longrightarrow B
\overset{\pi}{\longrightarrow}C\longrightarrow\Bbbk
\]
of Hopf algebras (namely, $A=B^{\operatorname{co}C}=\{b\in B:(\id\otimes\pi)\Delta(b)=b\otimes1\}$) for which $\pi$ admits a normalized coalgebra section $\sigma$, namely $\pi\sigma=\id_C$ and $\sigma(1)=1$. This normalization entails no loss: for an arbitrary coalgebra section, $\sigma(1)$ is a group-like element lying in the image of $A$, and $c\mapsto\sigma(1)^{-1}\sigma(c)$ is normalized.

It is usual, in the definition of exact sequences of Hopf algebras, to require $\ker(\pi)=B\ker(\varepsilon_A)$, but this follows from the coinvariants condition: there is an isomorphism of coalgebras $\Theta_r\colon C\otimes A\to B$, see~\eqref{eq:Theta} below, defined by $\Theta_r(c\otimes a)=\sigma(c)a$, and
\[\pi(a)=(\varepsilon_B\otimes\id_C)(\id_B\otimes\pi)\Delta(a)=(\varepsilon_B\otimes\id_C)(a\otimes1)=\varepsilon(a)1.\]
Therefore $\pi\Theta_r(c\otimes a)=c\varepsilon(a)$ and $\ker(\pi)=\sigma(C)\ker(\varepsilon_A)=B\ker(\varepsilon_A)$.

\begin{thm}[Cleft extensions]\label{thm:hopf-extension}
Let
\[
\Bbbk\longrightarrow A\longrightarrow B
\overset{\pi}{\longrightarrow}C\longrightarrow\Bbbk
\]
be a cleft exact sequence of cocommutative Hopf algebras. Then $B$ is amenable if and only if $A$ and $C$ are amenable.
\end{thm}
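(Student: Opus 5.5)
Three implications are needed: $B$ amenable $\Rightarrow A$ amenable, $B$ amenable $\Rightarrow C$ amenable, and $A,C$ amenable $\Rightarrow B$ amenable. The first is immediate from Theorem~\ref{thm:hopf-subalgebra}, since $A$ is a Hopf subalgebra of the cocommutative Hopf algebra $B$. For the second, I regard $C$ as a left $B$-module coalgebra via $b\cdot c=\pi(b)c$. The map $\pi\colon B\to C$ is then a surjective $B$-linear counital coalgebra morphism, so Theorem~\ref{thm:module-coalgebra-quotient} shows that $C$ is amenable as a $B$-module coalgebra. Since $\pi$ is surjective, every finite-dimensional $F\le C$ is $\pi(F')$ for some finite-dimensional $F'\le B$, and $F'E=FE$. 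Hence $C$ is amenable as a Hopf algebra.

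For the converse I would build Følner sets in $B$ as products $\sigma(E_C)\,E_A$, using the coalgebra isomorphism $\Theta_r\colon C\otimes A\to B$, $c\otimes a\mapsto\sigma(c)a$. Let $F\le B$ be finite-dimensional; by the fundamental theorem of coalgebras I may assume $F$ is a subcoalgebra containing $1$. For $f\in F$ and $c\in C$, write
\[
f\sigma(c)=\sum\sigma\bigl(\pi(f_{(1)})c_{(1)}\bigr)\,\gamma(f_{(2)},c_{(2)}),
\]
where $\gamma(f,c)=\sigma(\pi(f_{(1)})c_{(1)})^{-1}f_{(2)}\sigma(c_{(2)})\in A$. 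Here $\sigma^{-1}$ is the convolution inverse of $\sigma$, namely $\sigma^{-1}=S\circ\sigma$ in the cocommutative setting. Checking that $\gamma$ indeed lands in $A=B^{\mathrm{co}\,C}$, and that the displayed formula holds, is a routine cocommutative cocycle computation. Given $\epsilon>0$, I first choose a finite-dimensional subcoalgebra $E_C\le C$ with $\dim(E_C+\pi(F)E_C)<(1+\epsilon)\dim E_C$. I then let $F_A\le A$ be the finite-dimensional space spanned by $\gamma(F\otimes E_C)$ together with $1$. Next I choose $E_A\le A$ with $\dim(F_AE_A)<(1+\epsilon)\dim E_A$, and put $E=\Theta_r(E_C\otimes E_A)=\sigma(E_C)E_A$, which has dimension $\dim E_C\dim E_A$.

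The estimate then runs as follows. By the formula above,
\[
FE\subseteq\sigma\bigl(\pi(F)E_C\bigr)\,F_AE_A,
\]
so
\[
E+FE\subseteq\Theta_r\bigl((E_C+\pi(F)E_C)\otimes F_AE_A\bigr),
\]
using $1\in F_A$. This has dimension less than $(1+\epsilon)^2\dim E$, which suffices.

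The main obstacle is a quantifier issue. The space $F_A$ depends on $E_C$, so $E_A$ must be chosen after $E_C$. This order is in fact permissible: $\epsilon$ is fixed first, then $E_C$, then $F_A$ and $E_A$. So the real difficulty is only verifying the cocycle identity and that $\gamma$ takes values in $A$; this is where cocommutativity and cleftness are used. Finally, $E$ is a subcoalgebra if $E_A$ is chosen to be a subcoalgebra, since $\Theta_r$ is a coalgebra isomorphism. In any case Theorem~\ref{thm:hopf} reduces the claim to algebraic amenability, so this point is not essential.
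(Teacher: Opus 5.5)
Your proposal is correct and follows essentially the same route as the paper: $A$ via Theorem~\ref{thm:hopf-subalgebra}, $C$ via Theorem~\ref{thm:module-coalgebra-quotient} applied to $\pi$, and, for the converse, the product F\o{}lner set $\Theta_r(E_C\otimes E_A)=\sigma(E_C)E_A$, with $E_A$ chosen after $E_C$ to be F\o{}lner for a finite-dimensional ``defect'' space in $A$. The only difference is cosmetic: you obtain that defect space explicitly as the span of the cocycle values $\gamma(F\otimes E_C)$ (your formula for $f\sigma(c)$ and the membership $\gamma(f,c)\in A$ do check out, the latter using cocommutativity). The paper instead derives $F\sigma(P)\subseteq\sigma(P^+)A$ from the fact that $\Theta_r$ intertwines the coaction $(\pi\otimes\id)\Delta$ with $\Delta_C\otimes\id_A$, and then takes any finite-dimensional $D\le A$ containing $1$ that captures the defect.
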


\begin{proof}
Identify $A$ with its image in $B$. By exactness,
\[
A=B^{\operatorname{co}C}
 =\{b\in B:(\id\otimes\pi)\Delta(b)=b\otimes1\},
\]
and, by cocommutativity, also
\[
A={}^{\operatorname{co}C}B
 =\{b\in B:(\pi\otimes\id)\Delta(b)=1\otimes b\}.
\]
Let $\sigma\colon C\to B$ be a normalized coalgebra section of $\pi$, and put
\[
\overline\sigma=S_B\circ\sigma.
\]
Since $\sigma$ is a coalgebra map, $\overline\sigma$ is its convolution inverse:
\[
\sum\sigma(c_{(1)})\overline\sigma(c_{(2)})
 =\sum\overline\sigma(c_{(1)})\sigma(c_{(2)})
 =\varepsilon(c)1.
\]
Define
\[
r(b)=\sum b_{(1)}\overline\sigma\bigl(\pi(b_{(2)})\bigr),
\qquad
\ell(b)=\sum\overline\sigma\bigl(\pi(b_{(1)})\bigr)b_{(2)}.
\]
A direct coaction calculation gives
\[
(\id\otimes\pi)\Delta(r(b))=r(b)\otimes1,
\qquad
(\pi\otimes\id)\Delta(\ell(b))=1\otimes\ell(b),
\]
so $r(b),\ell(b)\in A$. The two normal-basis maps
\begin{equation}\label{eq:Theta}
\begin{aligned}
\Theta_\ell\colon A\otimes C&\longrightarrow B,
& a\otimes c&\longmapsto a\sigma(c),\\
\Theta_r\colon C\otimes A&\longrightarrow B,
& c\otimes a&\longmapsto\sigma(c)a
\end{aligned}
\end{equation}
are isomorphisms, with inverses
\[
\Theta_\ell^{-1}(b)=\sum r(b_{(1)})\otimes\pi(b_{(2)}),
\qquad
\Theta_r^{-1}(b)=\sum\pi(b_{(1)})\otimes\ell(b_{(2)}).
\]
Indeed, the convolution identities show immediately that these formulas are inverse to $\Theta_\ell$ and $\Theta_r$. Since multiplication in $B$ and the maps $A\to B$ and $\sigma$ are coalgebra maps, both normal-basis isomorphisms are coalgebra isomorphisms. In particular, $B$ is free qua left $A$-module.

Suppose first that $B$ is amenable. Since $A$ is a Hopf subalgebra of the cocommutative Hopf algebra $B$, Theorem~\ref{thm:hopf-subalgebra} shows that $A$ is amenable. The quotient map $\pi\colon B\twoheadrightarrow C$, with $C$ regarded as a left $B$-module coalgebra through $\pi$, satisfies the hypotheses of Theorem~\ref{thm:module-coalgebra-quotient}. Thus $C$ is amenable for the $B$-action. Since this action factors through $C$, and every finite-dimensional subspace of $C$ has a finite-dimensional lift to $B$, it follows that $C$ is amenable qua Hopf algebra.

Conversely, suppose that $A$ and $C$ are amenable. Let $F_0\leq B$ be finite-dimensional and consider $\epsilon>0$. Choose $\delta>0$ such that
\[
(1+\delta)^2<1+\epsilon,
\]
and choose a finite-dimensional subcoalgebra $F\leq B$ containing $F_0+\Bbbk1$. Put
\[
\overline F=\pi(F)\leq C.
\]
By amenability of $C$, there is a nonzero finite-dimensional subcoalgebra $P\leq C$ such that, with
\[
P^+=\overline F P,
\]
one has
\begin{equation}\label{eq:cleft-quotient-folner}
\dim P^+<(1+\delta)\dim P.
\end{equation}
Here $P\leq P^+$ because $1\in\overline F$, and $P^+$ is a subcoalgebra because multiplication $C\otimes C\to C$ is a coalgebra map.

Consider the left $C$-coaction
\[
\lambda=(\pi\otimes\id)\Delta_B\colon B\longrightarrow C\otimes B.
\]
Under $\Theta_r$, it is the regular coaction on the first tensor factor:
\begin{equation}\label{eq:cleft-regular-coaction}
(\id_C\otimes\Theta_r^{-1})\lambda\Theta_r
 =\Delta_C\otimes\id_A.
\end{equation}
Indeed,
\[
\lambda(\sigma(c)a)
 =\sum c_{(1)}\otimes\sigma(c_{(2)})a
\qquad(c\in C,\ a\in A).
\]
For $f\in F$ and $p\in P$ we have
\[
\lambda(f\sigma(p))
 =\sum\pi(f_{(1)})p_{(1)}
       \otimes f_{(2)}\sigma(p_{(2)})
 \in P^+\otimes B.
\]
It follows that
\begin{equation}\label{eq:cleft-extension-defect}
F\sigma(P)\subseteq\sigma(P^+)A.
\end{equation}
Indeed, if $x\in F\sigma(P)$ and $y=\Theta_r^{-1}(x)$, then
\[
(\Delta_C\otimes\id_A)(y)
 \in P^+\otimes C\otimes A
\]
by \eqref{eq:cleft-regular-coaction}; applying $\id_C\otimes\varepsilon_C\otimes\id_A$ gives $y\in P^+\otimes A$.

Since $F\sigma(P)$ is finite-dimensional, there is a finite-dimensional subspace $D\leq A$, which we may assume contains $1$, such that
\begin{equation}\label{eq:cleft-finite-defect}
F\sigma(P)\subseteq\sigma(P^+)D.
\end{equation}
By amenability of $A$, choose a nonzero finite-dimensional subcoalgebra $Q\leq A$ with
\begin{equation}\label{eq:cleft-kernel-folner}
\dim DQ<(1+\delta)\dim Q.
\end{equation}
Finally put
\[
E=\sigma(P)Q=\Theta_r(P\otimes Q)\leq B.
\]
Since $\Theta_r$ is a coalgebra isomorphism, $E$ is a nonzero finite-dimensional subcoalgebra and
\[
\dim E=\dim P\,\dim Q.
\]
Using \eqref{eq:cleft-finite-defect},
\[
FE\subseteq\sigma(P^+)DQ,
\]
and hence, by the injectivity of $\Theta_r$ and \eqref{eq:cleft-quotient-folner}--\eqref{eq:cleft-kernel-folner},
\[
\begin{split}
\dim FE
&\leq\dim P^+\,\dim DQ\\
&<(1+\delta)^2\dim P\,\dim Q
 <(1+\epsilon)\dim E.
\end{split}
\]
Since $1\in F$ and $F_0\leq F$, we have $E+F_0E\leq FE$. Thus $E$ is an $(F_0,\epsilon)$-Følner subcoalgebra of $B$, and $B$ is amenable.
\end{proof}

%%%%%%%%%%%%%%%%%%%%%%%%%%%%%%%%%%%%%%%%%%%%%%%%%%%%%%%%%%%%%%%% 
\section{Amenability of groups and their modules}\label{ss:quotients}

For a finite subset $S\subseteq G$, a subset $A$ of a left $G$-set, and a subspace $E$ of a left $\Bbbk G$-module, we write
\[
SA=\{sa:s\in S,\ a\in A\},
\qquad
SE=\sum_{s\in S}sE.
\]
Recall that a left $G$-set $X$ is \emph{amenable} if, for every finite $S\subseteq G$ and every $\epsilon>0$, there is a finite subset $A\subseteq X$ with $\#(A\cup SA)<(1+\epsilon)\#A$.

Finite-dimensional subcoalgebras of $\Bbbk G$, respectively $\Bbbk X$, are precisely those of the form $\Bbbk S$, respectively $\Bbbk A$, for finite subsets $S\subseteq G$ and $A\subseteq X$. It follows from Theorem~\ref{thm:module-coalgebra-rounding} that $X$ is amenable if and only if $\Bbbk X$ is amenable. In particular,
\begin{equation}\label{eq:group-ring-amenability}
G\text{ is amenable}
\quad\Longleftrightarrow\quad
\Bbbk G\text{ is an amenable Hopf algebra}.
\end{equation}
We recover the classical definition~\cite{bartholdi:amenability}*{Definition~11.2.2} that $G$ is amenable if and only if every nonempty $G$-set is amenable. We also deduce the following classical result, already appearing in~\cite{vneumann:masses}*{page~79}:

\begin{thm}[Subgroups and extensions]\label{thm:group-permanence}
Every subgroup of an amenable group is amenable, and if
\[
1\longrightarrow N\longrightarrow G
\overset{p}{\longrightarrow}Q\longrightarrow1
\]
is an exact sequence of groups then $G$ is amenable if and only if $N$ and $Q$ are amenable.
\end{thm}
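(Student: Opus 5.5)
The plan is to translate everything into Hopf algebras through \eqref{eq:group-ring-amenability}, and then invoke Theorem~\ref{thm:hopf-subalgebra} and Theorem~\ref{thm:hopf-extension}.

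\textbf{Subgroups.} Let $K\le G$ with $G$ amenable. Then $\Bbbk K$ is a Hopf subalgebra of the cocommutative Hopf algebra $\Bbbk G$. By \eqref{eq:group-ring-amenability}, $\Bbbk G$ is amenable, so Theorem~\ref{thm:hopf-subalgebra} shows that $\Bbbk K$ is amenable. Applying \eqref{eq:group-ring-amenability} again, $K$ is amenable.

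\textbf{Extensions.} Given $1\to N\to G\xrightarrow{p}Q\to1$, consider the sequence
\[
\Bbbk\longrightarrow\Bbbk N\longrightarrow\Bbbk G\overset{\pi}{\longrightarrow}\Bbbk Q\longrightarrow\Bbbk,
\]
where $\pi$ is the linear extension of $p$. Once this is shown to be a cleft exact sequence, Theorem~\ref{thm:hopf-extension} together with \eqref{eq:group-ring-amenability} gives the equivalence directly. Two things need to be checked.
\begin{enumerate}
\item \emph{Coalgebra section.} Choose a set-theoretic section $s\colon Q\to G$ of $p$ with $s(1)=1$, and extend it linearly to $\sigma\colon\Bbbk Q\to\Bbbk G$. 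It maps group-likes to group-likes, so it is a coalgebra map. It is normalized and satisfies $\pi\sigma=\id$.
\item \emph{Coinvariants.} Write $b=\sum_g b_g g$. Then
\[
(\id\otimes\pi)\Delta(b)=\sum_g b_g\, g\otimes p(g).
\]
The elements $g\otimes p(g)$ are linearly independent. So this equals $b\otimes1=\sum_g b_g\, g\otimes1$ exactly when $b_g=0$ for every $g$ with $p(g)\ne1$, that is, when $b\in\Bbbk N$. Hence $\Bbbk N=(\Bbbk G)^{\operatorname{co}\Bbbk Q}$.
\end{enumerate}

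There is no real obstacle. The only point needing care is the coinvariant computation, which holds because distinct group elements are linearly independent in $\Bbbk G\otimes\Bbbk Q$.

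The easy direction ($G$ amenable implies $Q$ amenable) does not need the cleft structure. It also follows from Theorem~\ref{thm:module-coalgebra-quotient} applied to $\pi$.
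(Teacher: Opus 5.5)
Your proposal is correct and follows the paper's proof essentially verbatim. Subgroups go through \eqref{eq:group-ring-amenability} and Theorem~\ref{thm:hopf-subalgebra}, and extensions go through the cleft sequence $\Bbbk N\to\Bbbk G\to\Bbbk Q$, with the normalized coalgebra section induced by a set-theoretic section $s$, and Theorem~\ref{thm:hopf-extension}. Your explicit coinvariant computation, which rests on $\{g\otimes q\}$ being a basis of $\Bbbk G\otimes\Bbbk Q$, fills in a step the paper merely asserts.
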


\begin{proof}
Let $K\leq G$. Then $\Bbbk K$ is a Hopf subalgebra of the cocommutative Hopf algebra $\Bbbk G$. If $G$ is amenable, then $\Bbbk G$ is amenable by \eqref{eq:group-ring-amenability}; Theorem~\ref{thm:hopf-subalgebra} shows that $\Bbbk K$ is amenable, and another application of \eqref{eq:group-ring-amenability} shows that $K$ is amenable.

Now consider the displayed exact sequence and choose a set-theoretic section $s\colon Q\to G$ of $p$ with $s(1)=1$. Its linear extension
\[
\sigma\colon\Bbbk Q\longrightarrow\Bbbk G,
\qquad q\longmapsto s(q),
\]
is a normalized coalgebra section of the induced Hopf epimorphism $\Bbbk G\twoheadrightarrow\Bbbk Q$, because group elements are group-like. The coinvariants of this Hopf epimorphism are precisely $\Bbbk N$, so the associated sequence
\[
\Bbbk\longrightarrow\Bbbk N\longrightarrow\Bbbk G
\longrightarrow\Bbbk Q\longrightarrow\Bbbk
\]
is a cleft exact sequence of cocommutative Hopf algebras. Theorem~\ref{thm:hopf-extension}, followed by \eqref{eq:group-ring-amenability}, gives the desired equivalence.
\end{proof}

We next strengthen the module form of the amenability criterion, proving the claimed assertion by Gromov:
\begin{thm}[Quotients of permutation modules]
\label{thm:quotients-of-permutation-modules}
Let $X$ be a left $G$-set and let $q\colon\Bbbk X\twoheadrightarrow V$ be a quotient of left $\Bbbk G$-modules. Suppose that $q(x)\neq0$ for some $x\in X$ whose orbit $G x$ is amenable. Then $V$ is amenable.

In particular, if $G$ is amenable, then every nonzero left $\Bbbk G$-module is amenable.
\end{thm}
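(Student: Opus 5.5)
The plan is to mimic the proof of Theorem~\ref{thm:module-coalgebra-quotient}. The module $V$ carries no coalgebra structure, so in place of the coaction $\rho$ I would use the ``graph'' of $q$ on a single orbit. Since $F=\Bbbk S$ is spanned by group-likes, no flag or base-change argument should be needed.

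\emph{Reduction to one orbit.} Let $Y=Gx$ and $V'=q(\Bbbk Y)$, a nonzero $G$-submodule of $V$. Følner subspaces of $V'$ are also Følner subspaces of $V$, because $SE$ is computed inside $V'$. So I may replace $X$ by $Y$ and $V$ by $V'$, and assume $X=Gx$ is transitive and amenable. Then $q(y)=g\,q(x)\neq0$ for every $y=gx\in X$.

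\emph{Graph embedding.} Define $\rho\colon\Bbbk X\to\Bbbk X\otimes V$ by $y\mapsto y\otimes q(y)$. It is $G$-linear for the diagonal action $g\mapsto g\otimes g$. It is injective, since distinct $y$ land in distinct first-factor lines and each $q(y)\neq0$. Fix a finite $S\ni1$ and $\epsilon>0$, and pick a finite $A\subseteq X$ with $\#SA<(1+\epsilon)\#A$. Put $Z_1=\rho(\Bbbk A)\le Z_2=\rho(\Bbbk SA)$, so that $\dim Z_1=\#A$ and $\dim Z_2=\#SA$. Let $W\le V$ be the finite-dimensional span of $q(SA)$. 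For subspaces $U\le W$ set $r_i(U)=\dim\bigl(Z_i\cap(\Bbbk X\otimes U)\bigr)$. Exactly as in Theorem~\ref{thm:module-coalgebra-quotient}, these are monotone and supermodular on the lattice of all subspaces of $W$, with $r_i(W)=\dim Z_i$. Let $C_t$ and $D_t$ be their largest maximizers. Lemma~\ref{lem:density-coarea} then gives $\int_0^\infty\dim C_t\,dt=\#A$ and $\int_0^\infty\dim D_t\,dt=\#SA$.

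\emph{Key claim: $gC_t\le D_t$ for each $g\in S$ and $t>0$.} Write $C=C_t$, $D=D_t$, and let $K=\{c\in C:gc\in D\}$ and $D'=D+gC\le W$. The map $g\otimes g$ sends $Z_1$ into $Z_2$, because $gA\subseteq SA$. It also sends $Z_1\cap(\Bbbk X\otimes C)$ into $Z_2\cap(\Bbbk X\otimes D')$, and $Z_1\cap(\Bbbk X\otimes K)$ into $Z_2\cap(\Bbbk X\otimes D)$. Since $g\otimes g$ is invertible, it induces an injection
\[
\frac{Z_1\cap(\Bbbk X\otimes C)}{Z_1\cap(\Bbbk X\otimes K)}\hookrightarrow\frac{Z_2\cap(\Bbbk X\otimes D')}{Z_2\cap(\Bbbk X\otimes D)}.
\]
This is the step to check carefully: an element of the left numerator whose image lies in $\Bbbk X\otimes D$ must have its second factors in $g^{-1}D\cap C=K$. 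Hence $r_2(D')-r_2(D)\ge r_1(C)-r_1(K)\ge t(\dim C-\dim K)=t(\dim D'-\dim D)$, using maximality of $C$ and rank--nullity for $c\mapsto gc+D$. Largestness of $D$ then forces $D'\le D$.

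\emph{Averaging and the second statement.} The key claim gives $SC_t\le D_t$, hence $\int\dim SC_t\,dt\le\#SA$. Comparing with $\int\dim C_t\,dt=\#A$, some $t$ with $C_t\neq0$ satisfies $\dim(C_t+SC_t)/\dim C_t\le\#SA/\#A<1+\epsilon$. This shows $V$ is amenable for finite $S$. For arbitrary finite-dimensional $F\le\Bbbk G$ one enlarges $F$ to $\Bbbk S$ with $S$ the finite support of a basis of $F$. For the last assertion: if $G$ is amenable and $V\neq0$, pick $v\neq0$. Then $\Bbbk G\to V$, $g\mapsto gv$, realizes the submodule $\Bbbk Gv$ as a quotient of $\Bbbk X$ with $X=G$, and $q(1)=v\neq0$ with $G\cdot1=G$ amenable. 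The first part, together with the submodule reduction above, then applies.
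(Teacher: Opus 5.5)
Your proof is correct. It uses the same engine as the paper: a density filtration for a rank function counting configuration points inside a subspace, the inclusion $SC_t\le D_t$, then coarea averaging. The packaging differs in two ways worth noting.

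The paper first moves into $V$. It shows that the orbit $Gv$, $v=q(x)$, is an amenable $G$-set by pushing forward an invariant mean from $Gx$. It then applies Lemma~\ref{lem:finite-configuration-linearization} to a F\o lner set $A\subseteq Gv\subseteq V\setminus\{0\}$ with rank $\#(A\cap L)$. The inclusion $sC_t(A)=C_t(sA)\le C_t(SA)$ then comes for free from the comparison and naturality parts of Lemma~\ref{lem:density-comparison}.

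You stay in $X$ instead. Since $r_1(U)=\dim\bigl(Z_1\cap(\Bbbk X\otimes U)\bigr)=\#\{y\in A:q(y)\in U\}$, your rank function counts points of $A\subseteq Gx$ with the multiplicities of the fibres of $q$, in effect a weighted F\o lner set on $Gv$. So you never need amenability of the quotient $G$-set $Gv$, nor the mean/F\o lner equivalence hidden in the paper's one-line pushforward. You also prove $gC_t\le D_t$ directly by the injection from Theorem~\ref{thm:module-coalgebra-quotient}, which needs no flag here because $g$ is group-like and invertible.

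Each route buys something different. Yours shows that the split-case argument of Theorem~\ref{thm:module-coalgebra-quotient} needs no coalgebra structure on the target, only an injective $G$-equivariant map such as $y\mapsto y\otimes q(y)$. The paper's isolates a reusable statement, valid for any finite $S\subseteq\operatorname{GL}(V)$ and any finite configuration of nonzero vectors.

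One assertion you make without justification is $D'=D+gC\le W$. You need it because $D$ is only the largest maximizer among subspaces of $W$. It does hold for $t>0$. The subspace $C''=\langle q(y):y\in A,\ q(y)\in C\rangle\le C$ satisfies $r_1(C'')=r_1(C)$, so maximality of $C$ forces $C''=C$. Hence $gC$ is spanned by vectors $q(gy)$ with $gy\in SA$, and therefore lies in $W$.
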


The conclusion of the last sentence is claimed in~\cite{bartholdi:amenability}*{Corollary~10.31}, but the proof given there is incorrect. It is actually enlightening to explain the error. For a vector space $V$, the survey considered the abelian group generated by symbols $[W]$, with $W\leq V$ finite-dimensional, subject to the valuation relations
\[
[A]+[B]=[A\cap B]+[A+B].
\]
The intended analogy was with the Boolean lattice of finite subsets. It breaks down because the lattice of subspaces is modular but not distributive, and complements are far from unique. Let $P$ be two-dimensional, fix a line $L<P$, and let $M,N<P$ be two complements of $L$. Then $[L]+[M]=[0]+[P]=[L]+[N]$, so $[M]=[N]$. In fact all lines have the same class: for two distinct lines $L,M$, choose a third line $N<L+M$ and compare the two pairs of complements. If $\lambda$ denotes this common class, a complete flag in a $d$-dimensional subspace $W$ gives
\[
[W]=d\lambda-(d-1)[0].
\]
Thus the class of a finite-dimensional subspace depends only on its dimension. In particular, $[W]=[gW]$ for every linear automorphism $g$, so the claimed unique normal form as a sum of a chain of subspaces is false, and the resulting seminorm cannot detect almost-invariance. A subset of a fixed ambient set has a unique set-theoretic complement, whereas a subspace generally has many linear complements; the valuation relations identify all of them and collapse precisely the information that the argument was meant to retain.

\begin{lem}[Linearization of a finite configuration]
\label{lem:finite-configuration-linearization}
Let $V$ be a vector space, let $S\subseteq\operatorname{GL}(V)$ be finite with $1\in S$, and let $\varnothing\neq A\subseteq V\setminus\{0\}$ be finite. Then there is a nonzero finite-dimensional subspace $E\leq V$ such that
\[
\frac{\dim SE}{\dim E}\leq\frac{\#(SA)}{\#A}.
\]
\end{lem}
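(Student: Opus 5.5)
The plan is to run the density-filtration argument of Theorem~\ref{thm:module-coalgebra-rounding} again, with cardinalities of finite sets of vectors in place of dimensions of intersections with a subspace. Let $W=\langle SA\rangle_\Bbbk$, a finite-dimensional space containing $A$ (because $1\in S$). Let $\mathcal L$ be the lattice of all subspaces of $W$. For a finite set $Y\subseteq W\setminus\{0\}$ put
\[
 r_Y(B)=\#(Y\cap B),\qquad B\in\mathcal L.
\]

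First I check the hypotheses of Lemma~\ref{lem:density-coarea} for $r_Y$.
\begin{itemize}
\item It is monotone.
\item $r_Y(0)=0$ because $0\notin Y$; this is where the hypothesis $A\subseteq V\setminus\{0\}$ is used, and $SA$ also avoids $0$ since $S\subseteq\operatorname{GL}(V)$.
\item It is supermodular, since $(Y\cap B)\cap(Y\cap B')=Y\cap(B\cap B')$ and $(Y\cap B)\cup(Y\cap B')\subseteq Y\cap(B+B')$, so inclusion--exclusion for sets gives the inequality.
\end{itemize}
Let $C_t$ be the largest maximizer of $r_A(B)-t\dim B$, and $D_t$ the largest maximizer of $r_{SA}(B)-t\dim B$. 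Since $r_A(W)=\#A$ and $r_{SA}(W)=\#SA$, Lemma~\ref{lem:density-coarea} gives
\[
\int_0^\infty\dim C_t\,dt=\#A,\qquad \int_0^\infty\dim D_t\,dt=\#SA.
\]

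The key step is the pointwise inclusion $SC_t\le D_t$. Fix $t$ and $s\in S$, and let $B=C_t\cap s^{-1}D_t$. Maximality of $C_t$ against the subspace $B$ gives the semistability inequality
\[
t(\dim C_t-\dim B)\le\#(A\cap C_t)-\#(A\cap B).
\]
Now compare $D_t+sC_t$ with $D_t$.
\begin{itemize}
\item By rank--nullity for $C_t\to W/D_t$, $c\mapsto sc+D_t$, we get $\dim(D_t+sC_t)-\dim D_t=\dim C_t-\dim B$.
\item The set $SA\cap(D_t+sC_t)$ contains $SA\cap D_t$ together with $s(A\cap C_t)\setminus D_t$. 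The second set has cardinality exactly $\#(A\cap C_t)-\#(A\cap B)$, because $s$ is injective and, for $a\in A\cap C_t$, we have $sa\in D_t$ if and only if $a\in B$.
\end{itemize}
Hence $r_{SA}(D_t+sC_t)-t\dim(D_t+sC_t)\ge r_{SA}(D_t)-t\dim D_t$. So $D_t+sC_t$ is also a maximizer, and the largest-maximizer property forces $sC_t\le D_t$. Summing over $s\in S$ yields $SC_t\le D_t$.

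Finally I average. From the inclusion, $\int_0^\infty\dim SC_t\,dt\le\#SA$, while $\int_0^\infty\dim C_t\,dt=\#A>0$. If every $t$ with $C_t\ne0$ had $\dim SC_t>\frac{\#SA}{\#A}\dim C_t$, integrating would give a contradiction. So some $t$ with $C_t\neq0$ satisfies $\dim SC_t/\dim C_t\le\#SA/\#A$, and we take $E=C_t$.

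The only delicate point is the counting step in the comparison. It needs the exact bijection between $s(A\cap C_t)\setminus D_t$ and $(A\cap C_t)\setminus B$, rather than a mere inequality, to match the rank--nullity term. Everything else is a direct transcription of the proof of Theorem~\ref{thm:module-coalgebra-rounding}, and no base change is needed because each $s$ is already an invertible linear map.
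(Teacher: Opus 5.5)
Your proof is correct and essentially the paper's: the same density filtrations for $r_A$ and $r_{SA}$, the same pointwise inclusion $SC_t\le D_t$, and the same coarea averaging. The paper gets that inclusion from $\operatorname{GL}(V)$-naturality, $C_t(sA)=sC_t(A)$, together with Lemma~\ref{lem:density-comparison}, using that $r_{SA}-r_{sA}=r_{SA\setminus sA}$ is monotone; your exchange computation with $B=C_t\cap s^{-1}D_t$ is that lemma's proof unrolled for this case.

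One small repair is needed. With the common ambient space $W=\langle SA\rangle$ you get $C_0=W$, and in general $sW\not\subseteq W$. So the inclusion can only be claimed for $t>0$. There $C_t\le\langle A\rangle$, because intersecting with $\langle A\rangle$ keeps $r_A$ and strictly lowers the dimension otherwise. Hence $sC_t\le W$, the map $C_t\to W/D_t$ is well defined, and $D_t+sC_t$ lies in $\mathcal L$. Since $\{0\}$ is a null set, the averaging is unaffected and still produces some $t>0$ with the required ratio.
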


\begin{proof}
For a finite set $Z\subseteq V\setminus\{0\}$, put $W_Z=\langle Z\rangle$ and, for $L\leq W_Z$, set $r_Z(L)=\#(Z\cap L)$. This function is monotone and supermodular:
\[
r_Z(L)+r_Z(L')\leq r_Z(L\cap L')+r_Z(L+L'),
\]
because $Z\cap(L\cup L')\subseteq Z\cap(L+L')$. For $t\geq0$, let $C_t(Z)$ be the largest maximizer of $r_Z(L)-t\dim L$. Lemma~\ref{lem:density-coarea} gives
\[
\int_0^\infty\dim C_t(Z)\,dt=\#Z.
\]

We invoke the two functoriality assertions of Lemma~\ref{lem:density-comparison}. If $Z\subseteq Z'$, regard both rank functions on the lattice of subspaces of $W_{Z'}$. For $t>0$, every maximizer for $r_Z$ lies in $W_Z$, since replacing $L$ by $L\cap W_Z$ leaves $r_Z(L)$ unchanged and decreases its dimension unless $L\leq W_Z$. Moreover, $r_{Z'}-r_Z=r_{Z'\setminus Z}$ is monotone. Hence
\[
C_t(Z)\leq C_t(Z').
\]
If $s\in\operatorname{GL}(V)$, naturality gives $C_t(sZ)=sC_t(Z)$.

Apply these facts to $A$. For every $s\in S$, the inclusion $sA\subseteq SA$ gives
\[
sC_t(A)=C_t(sA)\leq C_t(SA),
\]
and therefore $SC_t(A)\leq C_t(SA)$. Consequently
\[
\int_0^\infty\dim\bigl(SC_t(A)\bigr)\,dt\leq\#(SA),
\qquad
\int_0^\infty\dim C_t(A)\,dt=\#A.
\]
Since the integrands are step functions with finitely many values, there is some $t>0$ with $C_t(A)\neq0$ and
\[
\frac{\dim\bigl(SC_t(A)\bigr)}{\dim C_t(A)}
\leq\frac{\#(SA)}{\#A}.
\]
Taking $E=C_t(A)$ proves the lemma.
\end{proof}

\begin{proof}[Proof of Theorem~\ref{thm:quotients-of-permutation-modules}]
Choose $x\in X$ as in the statement and put $v=q(x)$. The map $gx\mapsto gv$ is a surjective morphism of $G$-sets from $Gx$ onto $Gv$. Pushing forward an invariant mean shows that quotients of amenable $G$-sets are amenable, so $Gv$ is amenable. Notice also that $0\notin Gv$, since every group element acts invertibly.

Let $F\leq\Bbbk G$ be finite-dimensional and let $\epsilon>0$. Choose a finite set $S\subseteq G$, with $1\in S$, such that $F\leq\Bbbk S$. Følner's criterion for the amenable $G$-set $Gv$ gives a finite set $A\subseteq Gv$ such that $\#(SA)<(1+\epsilon)\#A$. Lemma~\ref{lem:finite-configuration-linearization} gives a nonzero finite-dimensional $E\leq V$ with
\[
\dim SE\leq\frac{\#(SA)}{\#A}\dim E<(1+\epsilon)\dim E.
\]
Since $1\in S$ and $F\leq\Bbbk S$, one has $E+FE\leq SE$. Hence $\dim(E+FE)<(1+\epsilon)\dim E$, as required.

Finally, suppose that $G$ is amenable and that $0\neq V$ is a left $\Bbbk G$-module. Choose $0\neq v\in V$. The cyclic submodule $\Bbbk Gv$ is a nonzero quotient of the permutation module $\Bbbk G$, so the first part makes it amenable. Its Følner subspaces, viewed inside $V$, show that $V$ is amenable.
\end{proof}

%%%%%%%%%%%%%%%%%%%%%%%%%%%%%%%%%%%%%%%%%%%%%%%%%%%%%%%%%%%%%%%%
\section{Amenability of Lie algebras}\label{ss:lieamen}
We first reconcile our definition, which only tests finite-dimensional subspaces of $\LieL$, with ordinary algebraic amenability of $U(\LieL)$.

\begin{prop}[Generator test]\label{prop:generator-test}
A Lie algebra $\LieL$ is amenable if and only if the left regular $U(\LieL)$-module is algebraically amenable. Equivalently, in the definition of amenability one may use arbitrary finite-dimensional subspaces $E\le U(\LieL)$ and test against arbitrary finite-dimensional subspaces of $U(\LieL)$.
\end{prop}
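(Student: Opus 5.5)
The definition of amenability for $\LieL$ tests only finite-dimensional $F\le\LieL$. Amenability of the Hopf algebra $U(\LieL)$, by contrast, tests arbitrary finite-dimensional $F\le U(\LieL)$. By Theorem~\ref{thm:hopf}, the latter is equivalent to algebraic amenability of the left regular module. So the plan is to show that Følner subcoalgebras for generating subspaces $F\le\LieL$ automatically give Følner subcoalgebras for arbitrary finite-dimensional subspaces of $U(\LieL)$. One direction is trivial: if $U(\LieL)$ is algebraically amenable, Theorem~\ref{thm:module-coalgebra-rounding} (applied to a subcoalgebra containing $F$, which exists by the fundamental theorem of coalgebras) produces, for every finite-dimensional $F\le\LieL\le U(\LieL)$, a finite-dimensional subcoalgebra $E$ with $\dim(E+FE)<(1+\epsilon)\dim E$. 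Hence $\LieL$ is amenable.

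For the converse, let $F'\le U(\LieL)$ be finite-dimensional and $\epsilon>0$. Every element of $U(\LieL)$ is a polynomial in elements of $\LieL$. Hence there are a finite-dimensional $F\le\LieL$ and an integer $n$ with $F'\le(\Bbbk1+F)^n$. Put $A=\Bbbk1+F$, a cocommutative subcoalgebra, since elements of $\LieL$ are primitive. Choose $\delta>0$ with $(1+\delta)^n<1+\epsilon$. By amenability of $\LieL$ there is a subcoalgebra $E$ with $\dim AE<(1+\delta)\dim E$. I then want to bound $\dim A^nE$ by $(1+\delta)^n\dim E$, which gives $\dim(E+F'E)\le\dim A^nE<(1+\epsilon)\dim E$. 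This yields algebraic amenability, and then amenability by Theorem~\ref{thm:hopf}.

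The main obstacle is this iteration: $\dim AE\le(1+\delta)\dim E$ does not by itself imply $\dim A(AE)\le(1+\delta)\dim AE$. My first attempt is to avoid iteration by a submultiplicativity argument. I would look for a single Følner set good for $A^n$ directly, for instance via the standard trick $\dim A^kE/\dim E=\prod_{j<k}\dim A^{j+1}E/\dim A^jE$. For $E$ a Følner set of $A$ with very small $\delta$, one of the steps, over a long range of $k$, must be small. Still, this gives Følner sets only for $A$, not for $A^n$.

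The cleaner route is to note that $n$ can be absorbed: if $E$ is $(A,\delta)$-Følner, then each element of $A^nE$ beyond $E$ comes from $n$ boundary steps. So I would use the filtration $E\le AE\le\dots\le A^nE$ and the inequality $\dim A^{j+1}E-\dim A^jE\le \dim A\cdot(\dim AE-\dim E)\cdot(\dim A)^{j}$? That bound is too crude. Instead I would exploit that the Lie-algebra setting is symmetric: $\operatorname{ad}$-stability gives $A^jE\subseteq E+\sum_{i\le j}A^{i}(AE/E)$, i.e. $\dim(A^nE/E)\le (\dim A)^{n}\dim(AE/E)$. Choosing $\delta<\epsilon/(\dim A)^n$ then suffices. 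This last inequality holds in any module: $A^nE=E+\sum_{i<n}A^i(AE)$, and modulo $A^iE$ the space $A^{i+1}E$ is spanned by $A^i$ applied to representatives of $AE/E$. Thus $\dim A^{i+1}E-\dim A^iE\le(\dim A)^i\dim(AE/E)$. Summing over $i$ gives the required bound, with no coalgebra structure needed at this step.
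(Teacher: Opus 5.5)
Your final argument is correct and is the paper's proof: the trivial direction goes through Theorem~\ref{thm:module-coalgebra-rounding}, and for the converse you choose a complement of $E$ in $AE$, bound $\dim A^{i+1}E-\dim A^iE\le(\dim A)^i\dim(AE/E)$, and sum over $i<n$. The paper does the same via $V^dE\subseteq E+B+VB+\cdots+V^{d-1}B$ with $\delta S_d<\epsilon$. The earlier exploratory paragraphs (the $(1+\delta)^n$ plan and the appeal to $\operatorname{ad}$-stability) play no role and should be cut, since, as you note at the end, the estimate holds in any module.
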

\begin{proof}
If the left regular $U(\LieL)$-module is algebraically amenable, Theorem~\ref{thm:module-coalgebra-rounding}, applied to $H=M=U(\LieL)$, gives almost-invariant finite-dimensional subcoalgebras, so $\LieL$ is amenable.

Conversely, assume that $\LieL$ is amenable, and let $P\le U(\LieL)$ be finite-dimensional. There exist a finite-dimensional subspace $F\le \LieL$ and an integer $d\ge1$ such that, with
\[
V=\Bbbk1+F,
\]
we have $P\le V^d$. Put $m=\dim V$ and $S_d=1+m+\cdots+m^{d-1}$. Given $\epsilon>0$, choose $\delta>0$ with $\delta S_d<\epsilon$. By amenability of $\LieL$, there is a finite-dimensional subcoalgebra $E\le U(\LieL)$ such that
\[
\dim VE< (1+\delta)\dim E.
\]
Choose a complement $B$ with $VE=E\oplus B$. Then $\dim B<\delta\dim E$, and induction gives
\[
V^dE\subseteq E+B+VB+\cdots+V^{d-1}B.
\]
Hence
\[
\dim(E+PE)\le\dim V^dE
\le \dim E+S_d\dim B
<(1+\epsilon)\dim E.
\]
Thus the left regular $U(\LieL)$-module is algebraically amenable.
\end{proof}

We shall use Proposition~\ref{prop:generator-test} freely below, switching between coalgebra Følner sets and ordinary finite-dimensional Følner subspaces whenever convenient.

For a finitely generated Lie algebra $\LieL=\langle F\rangle$, define the ``balls'' by
\[B_1=F\text{ and }B_{n+1}=B_n+\langle [b,f]:b\in B_n,f\in F\rangle_\Bbbk\text{ for }n\ge1.\]
The \emph{growth} of $\LieL$ is the function $n\mapsto\dim B_n$, and $\LieL$ has \emph{subexponential growth} if $\limsup(\dim B_n)^{1/n}\le1$.
\begin{thm}\label{thm:liesubexp}
  Let $\LieL$ be a Lie algebra locally of subexponential growth (for example, Abelian). Then $\LieL$ is amenable.
\end{thm}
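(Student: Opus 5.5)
The plan is to reduce to Theorem~\ref{thm:subexp}: show that if $\LieL$ is locally of subexponential growth then $U(\LieL)$ is a Hopf algebra locally of subexponential growth, hence amenable. Amenability of the Hopf algebra $U(\LieL)$ produces almost-invariant finite-dimensional subcoalgebras $E\le U(\LieL)$ for every finite-dimensional test space, in particular for $F\le\LieL$, which is exactly the definition of amenability of $\LieL$.

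\textbf{Reduction to one generated subalgebra.} Let $P\le U(\LieL)$ be finite-dimensional. It lies in the subalgebra generated by some finite-dimensional $F\le\LieL$, which is $U(\LieM)$ with $\LieM=\langle F\rangle$ the Lie subalgebra generated by $F$. By hypothesis $\LieM$ has subexponential growth with respect to its Lie balls $B_n$ (generated from $F$). It then suffices to bound the dimension of the associative balls $\mathcal B_n=(\Bbbk1+F)^n$ in $U(\LieM)$.

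\textbf{Growth comparison.} I would give $\LieM$ the filtration by Lie balls, declaring elements of $B_k\setminus B_{k-1}$ to have degree $k$, and choose a basis adapted to it: $\dim B_k-\dim B_{k-1}$ basis vectors of degree $k$. A product of $n$ elements of $F$ rewrites, via commutators, as a combination of ordered PBW monomials whose total degree is at most $n$, since a commutator $[x,y]$ with $x\in B_i$, $y\in B_j$ lies in $B_{i+j}$. This is the standard filtration argument: $\operatorname{gr}$ of the filtration is commutative, and the weight is preserved. Hence $\dim\mathcal B_n$ is at most the number of PBW monomials of weight $\le n$, namely the coefficient sum up to $z^n$ of
\[
\prod_{k\ge1}(1-z^k)^{-a_k},\qquad a_k=\dim B_k-\dim B_{k-1}.
\]

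\textbf{Main obstacle.} The key step is showing that subexponential growth of $\sum_{k\le n}a_k$ forces the partition-type series above to have subexponential coefficients. I would argue with radius of convergence. Subexponential growth of $\dim B_n$ means $\sum a_kz^k$ converges for $|z|<1$. Then
\[
\log\prod_k(1-z^k)^{-a_k}=\sum_k a_k\sum_{m\ge1}z^{km}/m
\]
also converges for $0<z<1$, because it is dominated by $\sum_k a_k z^k/(1-z^k)\le(1-z)^{-1}\sum_k a_kz^k$. So the product has radius of convergence $1$. Its coefficients are nonnegative, so their partial sums $c_n$ satisfy $\limsup c_n^{1/n}\le1$. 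This gives $\limsup(\dim\mathcal B_n)^{1/n}=1$, so $U(\LieL)$ is locally of subexponential growth, and Theorem~\ref{thm:subexp} concludes. For the Abelian case, a direct count is easier: $U(\LieL)=\operatorname{Sym}(\LieL)$ is commutative, which is already covered by the remark after Theorem~\ref{thm:subexp}.
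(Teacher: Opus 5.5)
Your proof is correct and follows the same route as the paper: pass to $U(\LieM)$ for the finitely generated subalgebra $\LieM=\langle F\rangle$, deduce that $U(\LieL)$ is locally of subexponential growth, and conclude with Theorem~\ref{thm:subexp}. The only difference is in the transfer of subexponential growth from $\LieM$ to $U(\LieM)$: the paper cites Smith's theorem, while you prove it directly via the PBW filtration by Lie-ball weights and the radius-of-convergence bound on $\prod_k(1-z^k)^{-a_k}$, which is a valid self-contained argument. You also correctly observe that the passage back from Hopf amenability of $U(\LieL)$ to the definition of amenability for $\LieL$ needs only the trivial direction of Proposition~\ref{prop:generator-test}.
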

\begin{proof}
  Every finitely generated associative subalgebra of $U(\LieL)$ is contained in $U(\LieM)$ for some finitely generated Lie subalgebra $\LieM\le\LieL$. By hypothesis $\LieM$ has subexponential growth, so $U(\LieM)$ has subexponential growth by Smith's theorem~\cite{smith:subexponential}. Thus $U(\LieL)$ is locally of subexponential growth, so amenable by Theorem~\ref{thm:subexp}, and $\LieL$ is amenable by Proposition~\ref{prop:generator-test}.
\end{proof}

\begin{thm}\label{thm:subalgebra}
  Let $\LieM<\LieL$ be a subalgebra of an amenable Lie algebra $\LieL$. Then $\LieM$ is amenable.
\end{thm}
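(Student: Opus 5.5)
The plan is to reduce the statement to Theorem~\ref{thm:hopf-subalgebra}, applied to the inclusion of universal enveloping algebras. Both $U(\LieM)$ and $U(\LieL)$ are cocommutative Hopf algebras. The inclusion $\LieM\hookrightarrow\LieL$ induces a Hopf algebra map $U(\LieM)\to U(\LieL)$, and this map is injective by the Poincaré--Birkhoff--Witt theorem. To see this, extend an ordered basis of $\LieM$ to an ordered basis of $\LieL$, placing the basis of $\LieM$ first. The PBW monomials of $U(\LieM)$ are then among the PBW monomials of $U(\LieL)$, hence linearly independent there. In positive characteristic the same argument applies to restricted enveloping algebras, where restricted PBW monomials play the same role. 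Consequently $U(\LieM)$ is a Hopf subalgebra of $U(\LieL)$.

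Next we translate amenability of Lie algebras into amenability of Hopf algebras. By the Definition, $\LieL$ is amenable if and only if the regular module of $U(\LieL)$ admits almost-invariant finite-dimensional subcoalgebras with respect to test spaces $F\le\LieL$. By Proposition~\ref{prop:generator-test}, this is equivalent to the left regular $U(\LieL)$-module being algebraically amenable. By Theorem~\ref{thm:hopf}, that in turn is equivalent to $U(\LieL)$ being an amenable Hopf algebra in the sense of Section~\ref{ss:hopfamen}. The same chain of equivalences holds for $\LieM$.

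With this in place, the argument is short. Since $\LieL$ is amenable, $U(\LieL)$ is an amenable cocommutative Hopf algebra. Theorem~\ref{thm:hopf-subalgebra} then shows that its Hopf subalgebra $U(\LieM)$ is amenable. In particular, for every finite-dimensional $F\le\LieM\le U(\LieM)$ and every $\epsilon>0$ there is a finite-dimensional subcoalgebra $E\le U(\LieM)$ with $\dim(E+FE)<(1+\epsilon)\dim E$. This is exactly the definition of amenability of $\LieM$.

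The substantive input is entirely contained in Theorem~\ref{thm:hopf-subalgebra}, which rests on Takeuchi's faithful flatness and the Masuoka--Wigner projectivity result; I expect no obstacle beyond invoking it. The only point needing care is the PBW injectivity, which makes $U(\LieM)$ genuinely a Hopf subalgebra. A more hands-on alternative would bypass the Hopf-algebraic machinery: by PBW, $U(\LieL)$ is a free left $U(\LieM)$-module, with basis the ordered monomials in a complement of $\LieM$. One could then apply Lemma~\ref{lem:projective-module-descent} directly, followed by Theorem~\ref{thm:module-coalgebra-rounding}.
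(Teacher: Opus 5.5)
Your proposal is correct and follows the paper's proof: PBW makes $U(\LieM)$ a Hopf subalgebra of the cocommutative Hopf algebra $U(\LieL)$, Proposition~\ref{prop:generator-test} together with Theorem~\ref{thm:hopf} identifies amenability of a Lie algebra with amenability of its enveloping Hopf algebra, and Theorem~\ref{thm:hopf-subalgebra} finishes the argument. Your alternative is also valid: use the PBW freeness of $U(\LieL)$ as a left $U(\LieM)$-module and apply Lemma~\ref{lem:projective-module-descent} directly, which avoids Takeuchi and Masuoka--Wigner; the restricted-algebra remark is unnecessary, since ordinary PBW holds in every characteristic.
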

\begin{proof}
By the PBW theorem, $U(\LieM)$ identifies with a Hopf subalgebra of the cocommutative Hopf algebra $U(\LieL)$. By Theorem~\ref{thm:hopf} and Proposition~\ref{prop:generator-test}, amenability of $\LieL$ is equivalent to amenability of $U(\LieL)$. Theorem~\ref{thm:hopf-subalgebra} therefore makes $U(\LieM)$ amenable, and the same equivalence gives amenability of $\LieM$.
\end{proof}

\begin{thm}\label{thm:extension}
Let
\[
0\longrightarrow \LieK\longrightarrow \LieL
\overset{p}{\longrightarrow}\LieQ\longrightarrow0
\]
be an exact sequence of Lie algebras. Then $\LieL$ is amenable if and only if $\LieK$ and $\LieQ$ are amenable.
\end{thm}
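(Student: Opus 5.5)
The plan is to reduce the statement to Theorem~\ref{thm:hopf-extension} applied to the sequence of universal enveloping algebras
\[
\Bbbk\longrightarrow U(\LieK)\longrightarrow U(\LieL)\overset{U(p)}{\longrightarrow}U(\LieQ)\longrightarrow\Bbbk .
\]
Amenability transfers between Lie algebras and their enveloping algebras by Theorem~\ref{thm:hopf} together with Proposition~\ref{prop:generator-test}, exactly as in the proof of Theorem~\ref{thm:subalgebra}. Once we know that the displayed sequence is a cleft exact sequence of cocommutative Hopf algebras, Theorem~\ref{thm:hopf-extension} gives that $U(\LieL)$ is amenable if and only if $U(\LieK)$ and $U(\LieQ)$ are, and translating back proves the theorem.

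To check that the sequence is cleft, I would construct a coalgebra section of $U(p)$. Choose a linear section $s\colon\LieQ\to\LieL$ of $p$, and let $\LieL=\LieK\oplus s(\LieQ)$ be the resulting vector-space decomposition. In characteristic $0$, the symmetrization map $e\colon\operatorname{Sym}(\mathfrak g)\to U(\mathfrak g)$ is a coalgebra isomorphism for every Lie algebra $\mathfrak g$. Define
\[
\sigma=e_\LieL\circ\operatorname{Sym}(s)\circ e_\LieQ^{-1}\colon U(\LieQ)\to U(\LieL).
\]
This map is a coalgebra map, and it satisfies $\sigma(1)=1$. It is a section because $U(p)\circ e_\LieL=e_\LieQ\circ\operatorname{Sym}(p)$, since symmetrization is natural in Lie algebra maps. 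In positive characteristic one would instead work with restricted enveloping algebras, as the paper indicates, and I would restrict to characteristic $0$ here. Alternatively, the PBW coalgebra isomorphism $U(\LieL)\cong U(\LieK)\otimes\operatorname{Sym}(s\LieQ)$ could be used directly.

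The remaining step, which I expect to be the main technical point, is identifying the coinvariants: $U(\LieL)^{\operatorname{co}U(\LieQ)}=U(\LieK)$. The inclusion $\supseteq$ holds because $U(p)$ kills $\LieK$, so $(\id\otimes U(p))\Delta(k)=k\otimes1$ for $k\in\LieK$, and this extends multiplicatively. For the reverse inclusion, I would use the isomorphism $\Theta\colon U(\LieK)\otimes U(\LieQ)\to U(\LieL)$, $a\otimes c\mapsto a\sigma(c)$. It is a linear isomorphism by PBW, after comparing associated graded spaces for the degree filtration. Under $\Theta$, the right coaction $(\id\otimes U(p))\Delta$ becomes $\id\otimes\Delta_{U(\LieQ)}$, because $U(p)$ and $\sigma$ are coalgebra maps with $U(p)\sigma=\id$ and $U(p)$ restricted to $U(\LieK)$ equals $\varepsilon$. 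The coinvariants of $\id\otimes\Delta$ are $U(\LieK)\otimes\Bbbk1$, since the coinvariants of the regular coaction of a coalgebra are $\Bbbk1$. Hence the coinvariants of $U(\LieL)$ are exactly $U(\LieK)$. With this identification, all the hypotheses of Theorem~\ref{thm:hopf-extension} are satisfied, and the theorem follows.
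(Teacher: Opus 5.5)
Your reduction is the paper's: pass to $U(\LieK)\to U(\LieL)\to U(\LieQ)$ via Theorem~\ref{thm:hopf} and Proposition~\ref{prop:generator-test}, exhibit a normalized coalgebra section of $U(p)$, and apply Theorem~\ref{thm:hopf-extension}. Your identification of the coinvariants through $\Theta$ is correct, and more explicit than the paper's bare appeal to PBW.

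\textbf{The gap.} You prove the theorem only in characteristic $0$. The statement has no hypothesis on $\Bbbk$, and amenability of a Lie algebra is defined through the \emph{ordinary} enveloping algebra $U(\LieL)$ in every characteristic. The sentence about restricted enveloping algebras in the introduction belongs to the discussion of Quillen's theorem, not to this definition.

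The paper does need the positive-characteristic case. Section~\ref{ss:elem} applies the ordinary-$U$ definition to characteristic-$p$ Lie algebras. Theorem~\ref{thm:expogrowth}, stated over an arbitrary field, obtains amenability precisely from this extension theorem.

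Your section $\sigma=e_\LieL\circ\operatorname{Sym}(s)\circ e_\LieQ^{-1}$ does not exist in characteristic $p$, because symmetrization divides by $n!$. Dropping that factor does not help: the resulting map is no longer a coalgebra map. So as written, half of the statement is unproved.

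\textbf{The repair.} Use the paper's section instead. Fix an ordered basis $(x_i)$ of $\LieQ$ and define $\sigma$ on ordered PBW monomials by $\sigma(x_1^{a_1}\cdots x_n^{a_n})=s(x_1)^{a_1}\cdots s(x_n)^{a_n}$. The $x_i$ are primitive in $U(\LieQ)$ and the $s(x_i)$ are primitive in $U(\LieL)$. Hence in both algebras
\[
\Delta(x_1^{a_1}\cdots x_n^{a_n})=\sum_{0\le b_i\le a_i}\prod_i\binom{a_i}{b_i}\,x_1^{b_1}\cdots x_n^{b_n}\otimes x_1^{a_1-b_1}\cdots x_n^{a_n-b_n},
\]
and both tensor factors are again ordered monomials. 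This identity holds over any field, so $\sigma$ is a normalized coalgebra map, and $U(p)\sigma=\id$ is clear.

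With this $\sigma$, the rest of your argument goes through unchanged in every characteristic:
\begin{itemize}
\item bijectivity of $\Theta$, by comparing associated graded spaces;
\item transport of the coaction to $\id\otimes\Delta_{U(\LieQ)}$;
\item identification of the coinvariants as $U(\LieK)\otimes\Bbbk1$.
\end{itemize}
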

\begin{proof}
Choose a linear section $s\colon\LieQ\to\LieL$ of $p$ and an ordered basis of $\LieQ$. Sending every ordered PBW monomial to the corresponding product of its chosen lifts under $s$ defines a linear section
\[
\sigma\colon U(\LieQ)\longrightarrow U(\LieL)
\]
of the induced Hopf epimorphism. Since the basis elements and their lifts are primitive, the PBW coproduct formula shows that $\sigma$ is a normalized coalgebra map. The PBW theorem also shows that
\[
\Bbbk\longrightarrow U(\LieK)\longrightarrow U(\LieL)
\longrightarrow U(\LieQ)\longrightarrow\Bbbk
\]
is exact, hence cleft. Theorem~\ref{thm:hopf-extension}, together with Theorem~\ref{thm:hopf} and Proposition~\ref{prop:generator-test}, gives the stated equivalence.
\end{proof}

\begin{thm}\label{thm:lieunion}
  Let $\LieL=\bigcup_{i\in I}\LieL_i$ be a directed union of amenable Lie algebras. Then $\LieL$ is amenable.
\end{thm}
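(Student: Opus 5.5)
The plan is to deduce this from the Hopf-algebraic statement, Theorem~\ref{thm:union}. There are two natural routes, and I would write both, since one is a direct rerun and the other serves as a check that the identifications are correct.

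\emph{Direct route.} Let $F\le\LieL$ be finite-dimensional and $\epsilon>0$. Since $F$ is spanned by finitely many elements and the union is directed, there is an index $i\in I$ with $F\le\LieL_i$. Because $\LieL_i$ is amenable, the definition gives a finite-dimensional subcoalgebra $E\le U(\LieL_i)$ satisfying
\[\dim(E+FE)<(1+\epsilon)\dim E.\]
By the PBW theorem, the inclusion $\LieL_i\hookrightarrow\LieL$ induces an injective Hopf algebra map $U(\LieL_i)\hookrightarrow U(\LieL)$. Its image $E$ is therefore still a finite-dimensional subcoalgebra of $U(\LieL)$. The product $FE$ is computed inside the subalgebra $U(\LieL_i)$, so it does not change when viewed in $U(\LieL)$. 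Hence $E$ is a Følner subcoalgebra for $(F,\epsilon)$ in $U(\LieL)$, and $\LieL$ is amenable.

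\emph{Hopf-algebraic route.} Alternatively, observe that $U(\LieL)=\bigcup_i U(\LieL_i)$ is a directed union of Hopf subalgebras. Each element of $U(\LieL)$ is a polynomial in finitely many elements of $\LieL$, and these all lie in a single $\LieL_i$. Each $U(\LieL_i)$ is amenable by Theorem~\ref{thm:hopf} together with Proposition~\ref{prop:generator-test}. Theorem~\ref{thm:union} then makes $U(\LieL)$ amenable, and the same equivalence makes $\LieL$ amenable.

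The only point that needs care is the injectivity of $U(\LieL_i)\to U(\LieL)$, which is needed so that dimensions are preserved. This is exactly the PBW theorem, already used in Theorem~\ref{thm:subalgebra}. I do not expect any genuine obstacle.
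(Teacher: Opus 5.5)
Your proposal is correct. The Hopf-algebraic route is exactly the paper's proof: PBW gives $U(\LieL)=\bigcup_i U(\LieL_i)$, and Theorem~\ref{thm:union} is combined with Theorem~\ref{thm:hopf} and Proposition~\ref{prop:generator-test}. Your direct route is just the proof of Theorem~\ref{thm:union} carried out at the level of $\LieL$, and it has the minor advantage of needing only PBW injectivity, not the rounding theorem or the generator test.
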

\begin{proof}
  By the PBW theorem, $U(\LieL)=\bigcup_{i\in I}U(\LieL_i)$; each $U(\LieL_i)$ is amenable by Theorem~\ref{thm:hopf} and Proposition~\ref{prop:generator-test}, so $U(\LieL)$ is amenable by Theorem~\ref{thm:union}, so $\LieL$ is amenable again by Theorem~\ref{thm:hopf} and Proposition~\ref{prop:generator-test}.
\end{proof}

%%%%%%%%%%%%%%%%%%%%%%%%%%%%%%%%%%%%%%%%%%%%%%%%%%%%%%%%%%%%%%%% 
\section{Elementarily amenable Lie algebras}\label{ss:elem}
Let $\textsf{EL}_0$ denote the class of Abelian and finite-dimensional Lie algebras. For an ordinal $\alpha$, let $\textsf{EL}_{\alpha+1}$ consist of extensions of two Lie algebras in $\textsf{EL}_\alpha$ and directed unions of Lie algebras in $\textsf{EL}_\alpha$. For a limit ordinal $\lambda$, put
\[
\textsf{EL}_\lambda=\bigcup_{\beta<\lambda}\textsf{EL}_\beta.
\]
The hierarchy is increasing, since $0\in\textsf{EL}_\alpha$ and every $\LieL\in\textsf{EL}_\alpha$ is the trivial extension of $\LieL$ by $0$.

We first record that this hierarchy already has the subalgebra and quotient closure built into the definition of elementary amenability used in the introduction.

\begin{lem}
For every ordinal $\alpha$, the class $\textsf{EL}_\alpha$ is closed under taking subalgebras and quotients.
\end{lem}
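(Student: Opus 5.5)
We argue by transfinite induction on $\alpha$, proving simultaneously that subalgebras and quotients of members of $\textsf{EL}_\alpha$ lie in $\textsf{EL}_\alpha$.

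\emph{Base case.} For $\alpha=0$ the claim is immediate. A subalgebra or quotient of an Abelian Lie algebra is Abelian, and a subalgebra or quotient of a finite-dimensional Lie algebra is finite-dimensional.

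\emph{Limit ordinals.} If $\lambda$ is a limit ordinal, every $\LieL\in\textsf{EL}_\lambda$ already lies in some $\textsf{EL}_\beta$ with $\beta<\lambda$. The induction hypothesis for $\beta$ then applies, and $\textsf{EL}_\beta\subseteq\textsf{EL}_\lambda$.

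\emph{Successor ordinals.} Let $\LieL\in\textsf{EL}_{\alpha+1}$. By the monotonicity already recorded, members of $\textsf{EL}_\alpha$ are handled by the induction hypothesis. Otherwise $\LieL$ is either an extension or a directed union, and we treat the two cases separately.

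Suppose first that $\LieL$ is an extension $0\to\LieK\to\LieL\overset{p}{\to}\LieQ\to0$ with $\LieK,\LieQ\in\textsf{EL}_\alpha$.
\begin{itemize}
\item For a subalgebra $\LieM\le\LieL$, we have the exact sequence $0\to\LieM\cap\LieK\to\LieM\to p(\LieM)\to0$. Here $\LieM\cap\LieK$ is a subalgebra of $\LieK$ and $p(\LieM)$ is a subalgebra of $\LieQ$, so both lie in $\textsf{EL}_\alpha$ by induction. Hence $\LieM\in\textsf{EL}_{\alpha+1}$.
\item For a quotient $\LieL/\LieI$ with $\LieI$ an ideal, we have the exact sequence
\[0\to\LieK/(\LieK\cap\LieI)\to\LieL/\LieI\to\LieQ/p(\LieI)\to0.\]
The first term is $(\LieK+\LieI)/\LieI$, a quotient of $\LieK$. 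For the last term, $p(\LieI)$ is an ideal of $\LieQ$ because $p$ is surjective, and $\LieL/(\LieK+\LieI)\cong\LieQ/p(\LieI)$. Both outer terms lie in $\textsf{EL}_\alpha$ by induction, so $\LieL/\LieI\in\textsf{EL}_{\alpha+1}$.
\end{itemize}

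Suppose next that $\LieL=\bigcup_i\LieL_i$ is a directed union with each $\LieL_i\in\textsf{EL}_\alpha$.
\begin{itemize}
\item For a subalgebra $\LieM\le\LieL$, write $\LieM=\bigcup_i(\LieM\cap\LieL_i)$. This is a directed union of subalgebras of the $\LieL_i$, which lie in $\textsf{EL}_\alpha$ by induction.
\item For a quotient $\LieL/\LieI$, write $\LieL/\LieI=\bigcup_i(\LieL_i+\LieI)/\LieI$. This union is directed, and each term satisfies $(\LieL_i+\LieI)/\LieI\cong\LieL_i/(\LieL_i\cap\LieI)$. Since $\LieL_i\cap\LieI$ is an ideal of $\LieL_i$, each term is a quotient of $\LieL_i$ and so lies in $\textsf{EL}_\alpha$.
\end{itemize}

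\emph{Main point to check.} The one step needing care is the quotient of an extension: one must verify that $p(\LieI)$ is an ideal of $\LieQ$ and that the displayed sequence is exact. Both follow from surjectivity of $p$ and the isomorphism theorems. Everything else is routine bookkeeping.
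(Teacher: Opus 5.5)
Your proposal is correct and follows essentially the same route as the paper: transfinite induction with the trivial base and limit cases, and a successor step that handles extensions via $0\to\LieM\cap\LieK\to\LieM\to p(\LieM)\to0$ and $(\LieK+\LieI)/\LieI$, $\LieQ/p(\LieI)$, and handles directed unions via intersections and images. Your extra remark that $p(\LieI)$ is an ideal of $\LieQ$ by surjectivity of $p$ just makes explicit a step the paper leaves implicit.
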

\begin{proof}
We argue by transfinite induction. The assertion is immediate for $\alpha=0$ and at limit ordinals. Suppose it holds for $\textsf{EL}_\alpha$.

If
\[
0\longrightarrow \LieK\longrightarrow \LieL\overset{p}{\longrightarrow}\LieQ\longrightarrow0
\]
is an extension with $\LieK,\LieQ\in\textsf{EL}_\alpha$ and $\LieM\le \LieL$, then
\[
0\longrightarrow \LieM\cap \LieK\longrightarrow \LieM\longrightarrow p(\LieM)\longrightarrow0
\]
is an extension of two members of $\textsf{EL}_\alpha$. If $\LieJ\triangleleft \LieL$, then $\LieL/\LieJ$ is an extension of
\[
(\LieK+\LieJ)/\LieJ\cong \LieK/(\LieK\cap \LieJ)
\]
by
\[
\LieL/(\LieK+\LieJ)\cong \LieQ/p(\LieJ),
\]
again two members of $\textsf{EL}_\alpha$.

If rather $\LieL=\bigcup_i \LieL_i$ is a directed union with $\LieL_i\in\textsf{EL}_\alpha$, then every subalgebra $\LieM\le \LieL$ is the directed union of the $\LieM\cap \LieL_i$, and every quotient $\LieL/\LieJ$ is the directed union of the images $(\LieL_i+\LieJ)/\LieJ$. By the induction hypothesis these terms lie in $\textsf{EL}_\alpha$. Thus subalgebras and quotients of members of $\textsf{EL}_{\alpha+1}$ again belong to $\textsf{EL}_{\alpha+1}$.
\end{proof}

It follows that $\bigcup_\alpha\textsf{EL}_\alpha$ is precisely the smallest class containing the finite-dimensional and Abelian Lie algebras and closed under subalgebras, quotients, extensions, and directed unions, namely the class \textsf{EL} defined in the introduction.

To separate amenability from elementary amenability, it suffices to exhibit a finitely generated, amenable, infinite-dimensional simple Lie algebra. If we assume $\operatorname{char}\Bbbk=0$, an example is the Witt algebra
\[
\LieW=\bigoplus_{n\in\mathbb Z}\Bbbk e_n,
\qquad
\text{with }[e_i,e_j]=(j-i)e_{i+j}.
\]

We recall for convenience the following standard facts; see~\cite{fuks:infinitedim}:
\begin{lem}\label{lem:witt-properties}
The Witt algebra $\LieW$ is finitely generated, simple, infinite-dimensional, and of linear growth.
\end{lem}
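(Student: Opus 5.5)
The plan is to verify the four properties directly from the bracket $[e_i,e_j]=(j-i)e_{i+j}$, using $\operatorname{char}\Bbbk=0$ throughout so that the integer coefficients $j-i$ are nonzero whenever $i\neq j$. Infinite-dimensionality is immediate since the $e_n$ form a basis.

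For finite generation, I would take $F=\langle e_{-2},e_{-1},e_1,e_2\rangle$. Then $[e_1,e_2]=e_3$ and, inductively, $[e_1,e_n]=(n-1)e_{n+1}$ produces every $e_n$ with $n\ge3$, because $n-1\neq0$. Symmetrically $[e_{-1},e_{-n}]=(1-n)e_{-n-1}$ produces all negative indices, and $[e_{-1},e_1]=2e_0$ gives $e_0$. The same computation yields linear growth: by induction, the ball $B_n$ is contained in $\langle e_k:|k|\le 2n\rangle$, since bracketing with $F$ shifts indices by at most $2$. Hence $\dim B_n\le 4n+1$. Conversely $B_n\supseteq\langle e_k: |k|\le n+1\rangle$, so the growth is exactly linear. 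In particular the growth is subexponential.

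For simplicity, let $0\neq\LieJ\triangleleft\LieW$ and pick $0\neq x=\sum_{k\in S}c_ke_k\in\LieJ$ with $S$ finite. Since $\operatorname{ad}e_0$ acts by $e_k\mapsto k e_k$, it is diagonal with distinct eigenvalues. A Vandermonde argument then gives that $\LieJ$, being $\operatorname{ad}e_0$-stable, contains each component $e_k$ with $c_k\ne0$. Concretely, one applies polynomials in $\operatorname{ad}e_0$ to isolate each eigencomponent; this step uses characteristic $0$. So $e_k\in\LieJ$ for some $k$.

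If $k\neq0$, then $[e_{-k},e_k]=2ke_0\in\LieJ$, hence $e_0\in\LieJ$. In every case we thus obtain $e_0\in\LieJ$, and then $[e_0,e_n]=ne_n\in\LieJ$ for all $n\neq0$, so $\LieJ=\LieW$.

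I do not expect any step to be a real obstacle. The only point requiring care is the eigenvector extraction, which needs infinitely many distinct eigenvalues of $\operatorname{ad}e_0$, i.e.\ characteristic $0$ (in characteristic $p$ the argument and the statement fail). Together with Theorem~\ref{thm:liesubexp}, this shows $\LieW$ is amenable.
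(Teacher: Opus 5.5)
Your proof is correct and follows the paper's argument essentially verbatim: the same generators $e_{\pm1},e_{\pm2}$, the same index-shift bound $\dim B_n\le 4n+1$, and the same simplicity argument (eigenprojections of $\operatorname{ad}e_0$, then $[e_{-k},e_k]=2ke_0$ and $[e_0,e_n]=ne_n$). One harmless slip: your lower bound $B_n\supseteq\langle e_k:|k|\le n+1\rangle$ fails at $n=1$, since $e_0\notin F=B_1$; it does hold for $n\ge2$, and the lemma does not need it anyway.
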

\begin{proof}
The four elements
\[
e_{-2},\ e_{-1},\ e_1,\ e_2
\]
generate $\LieW$. Indeed,
\[
[e_1,e_n]=(n-1)e_{n+1}\qquad(n\ge2)
\]
generates all $e_n$ with $n\ge3$ from $e_1,e_2$, and similarly
\[
[e_{-1},e_n]=(n+1)e_{n-1}\qquad(n\le-2)
\]
generates all $e_n$ with $n\le-3$; finally
\[
[e_{-1},e_1]=2e_0.
\]
With respect to this generating space, every bracket word of length at most $n$ is a linear combination of $e_j$ with $|j|\le2n$. Hence the $n$th ball has dimension at most $4n+1$, so $\LieW$ has linear growth.

To prove simplicity, consider a nonzero $\LieK\triangleleft \LieW$ and choose
\[
0\neq x=\sum_{j=m}^n a_j e_j\in \LieK.
\]
Since
\[
[e_0,e_j]=j e_j,
\]
the operator $\operatorname{ad}(e_0)$ is diagonal on the finite-dimensional span of the support of $x$, with pairwise distinct eigenvalues. A polynomial in $\operatorname{ad}(e_0)$ therefore projects $x$ onto a nonzero scalar multiple of one homogeneous component, so $e_j\in \LieK$ for some $j$. If $j\neq0$, then
\[
[e_j,e_{-j}]=-2j e_0\in \LieK,
\]
while if $j=0$ this is already true. It follows from $[e_0,e_k]=k e_k$ that $e_k\in\LieK$ for every $k\neq0$. Thus $\LieK=\LieW$.
\end{proof}

\begin{cor}\label{thm:amennotelem1}
If $\operatorname{char}\Bbbk=0$, then the Witt algebra $\LieW$ is amenable but not elementarily amenable; and moreover
\[
\textsf{EL}\neq\textsf{SL}.
\]
\end{cor}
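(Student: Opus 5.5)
Amenability of $\LieW$ follows at once from Lemma~\ref{lem:witt-properties} and Theorem~\ref{thm:liesubexp}. Every finitely generated subalgebra of $\LieW$ lies in the span of finitely many $e_j$ together with their brackets. More simply, I would argue that any finite-dimensional $F\le\LieW$ lies in the span of $e_{-N},\dots,e_N$ for some $N$. Brackets of length $n$ in $F$ then lie in $\bigoplus_{|j|\le nN}\Bbbk e_j$, so every finitely generated subalgebra has at most linear growth. Hence $\LieW$ is locally of subexponential growth, and Theorem~\ref{thm:liesubexp} makes it amenable. The same argument shows $\LieW\in\textsf{SL}$, since $\textsf{SL}$ contains Lie algebras of subexponential growth and is closed under directed unions. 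Indeed $\LieW$ is itself finitely generated of linear growth, so it lies in $\textsf{SL}$ directly.

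The main step is to show $\LieW\notin\textsf{EL}=\bigcup_\alpha\textsf{EL}_\alpha$. I argue by transfinite induction that no member of $\textsf{EL}_\alpha$ is isomorphic to $\LieW$. The cleanest route is to prove a stronger statement: every finitely generated, infinite-dimensional, simple Lie algebra in $\textsf{EL}_\alpha$ already lies in $\textsf{EL}_0$. This is impossible for $\LieW$, which is neither Abelian nor finite-dimensional. At limit ordinals there is nothing to do.

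For the successor step, take $\LieL\in\textsf{EL}_{\alpha+1}$ finitely generated and simple, and split into the two ways $\LieL$ can arise.

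If $\LieL$ is an extension $0\to\LieK\to\LieL\to\LieQ\to0$ with $\LieK,\LieQ\in\textsf{EL}_\alpha$, simplicity forces $\LieK=0$ or $\LieK=\LieL$. In the first case $\LieL\cong\LieQ\in\textsf{EL}_\alpha$. In the second, $\LieL\cong\LieK\in\textsf{EL}_\alpha$. Either way induction applies.

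If $\LieL$ is a directed union of $\LieL_i\in\textsf{EL}_\alpha$, finite generation puts the generators in a single $\LieL_i$, so $\LieL=\LieL_i\in\textsf{EL}_\alpha$.

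Here I use that $\textsf{EL}_\alpha$ is closed under isomorphism, which is evident from the definition. The only real subtlety is making sure the induction statement is phrased for isomorphism classes, and that the base case holds: $\LieW$ is not in $\textsf{EL}_0$, being neither Abelian nor finite-dimensional.

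Combining the two parts, $\LieW\in\textsf{SL}\setminus\textsf{EL}$, so $\textsf{EL}\neq\textsf{SL}$ and $\LieW$ is amenable but not elementarily amenable. I expect no serious obstacle. The point needing care is only the induction in the second part: a finitely generated simple algebra cannot be a proper directed union, nor a nontrivial extension.
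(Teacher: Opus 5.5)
Your proof is correct and essentially the paper's: amenability from linear growth via Theorem~\ref{thm:liesubexp}, and non-membership in \textsf{EL} by ruling out the successor-step extension case (simplicity) and directed-union case (finite generation). Your transfinite induction on a strengthened statement is equivalent to the paper's choice of a least ordinal $\alpha$ with $\LieW\in\textsf{EL}_\alpha$. Your explicit check that every finite-dimensional subspace of $\LieW$ generates a subalgebra of linear growth is a welcome small addition, since Theorem~\ref{thm:liesubexp} asks for \emph{local} subexponential growth, which the paper leaves implicit.
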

\begin{proof}
By Lemma~\ref{lem:witt-properties}, $\LieW$ has subexponential growth, $\LieW\in\textsf{SL}_0$ and it is amenable by Theorem~\ref{thm:liesubexp}.

Suppose for contradiction that $\LieW$ is elementarily amenable, and choose the least ordinal $\alpha$ with $\LieW\in\textsf{EL}_\alpha$. We have $\alpha>0$, since $\LieW$ is neither finite-dimensional nor Abelian, and $\alpha$ is not a limit ordinal by minimality. Write $\alpha=\beta+1$.

If $\LieW$ is a directed union of Lie algebras in $\textsf{EL}_\beta$, then a finite generating set for $\LieW$ lies in one member of the directed system, so that member is all of $\LieW$. This contradicts the minimality of $\alpha$.

Thus $\LieW$ occurs as an extension
\[
0\longrightarrow \LieJ\longrightarrow \LieW\longrightarrow \LieQ\longrightarrow0
\]
with $\LieJ,\LieQ\in\textsf{EL}_\beta$. By simplicity, either $\LieJ=0$ or $\LieJ=\LieW$. In the first case $\LieW\cong \LieQ\in\textsf{EL}_\beta$, and in the second $\LieW=\LieJ\in\textsf{EL}_\beta$, again contradicting minimality. Hence $\LieW$ is not elementarily amenable.
\end{proof}

Assume now that $\operatorname{char}\Bbbk$ is positive. There are examples of \emph{branched} Lie algebras, due to Petrogradsky, Shestakov and Zelmanov~\cite{petrogradsky-s-z:nillie} that are amenable but not elementarily amenable. We briefly recall the construction, as described in~\cite{bartholdi:wreathalg}.

A \emph{self-similar Lie algebra} is a Lie algebra $\LieL$ endowed with an injective homomorphism
\begin{equation}\label{eq:sslie}
  \phi\colon \LieL\to X\otimes\LieL\rtimes\Der(X),
\end{equation}
for a finite-dimensional commutative algebra $X$. Conversely, a choice of generating set and the specification of~\eqref{eq:sslie} on the generators uniquely defines a (maximal) self-similar Lie algebra. 

Let $\Bbbk$ be a field of positive characteristic $p$, and set $X=\Bbbk[x]/(x^p)$. The Petrogradsky-Shestakov-Zelmanov algebra is the Lie algebra generated by $d,v$ and self-similarity map
\[
 \phi(d)=\partial_x,
 \qquad
 \phi(v)=1\otimes d+x^{p-1}\otimes v.
\]
Define the ``first-level subalgebra''
\begin{equation}\label{eq:D}
 \mathfrak D=\ker\bigl(\LieL\overset{\phi}{\longrightarrow}
 (X\otimes\LieL)\rtimes\Der X
 \longrightarrow\Der X\bigr).
\end{equation}
As proved in~\cite{bartholdi:wreathalg}*{Proposition~4.15}, $\LieL$ is $\mathbb R_+$-graded, \emph{recurrent} (meaning that $(\varepsilon\otimes 1)\phi\colon\mathfrak D\to\LieL$ is onto), \emph{transitive} (meaning that the action of $U(\LieL)$ on $X^{\otimes n}$ is cyclic for all $n\in\N$), and \emph{regularly branched} over the finite-codimension ideal
\[
 \LieK=\langle c\rangle^{\LieL},
 \qquad
 c=[v,[d,v]],
\]
meaning $\phi(\LieK)$ contains $X\otimes\LieK$.

\begin{thm}\label{thm:amennotelem2}
For every prime $p$, the Lie algebra $\LieL$ belongs to $\textsf{SL}$ but not to $\textsf{EL}$.
\end{thm}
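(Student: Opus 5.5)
The plan has two halves: membership in \textsf{SL} comes from growth, and non-membership in \textsf{EL} comes from a minimal-ordinal argument like the one for Corollary~\ref{thm:amennotelem1}, using branching where the Witt case used simplicity.

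For membership in \textsf{SL}, I will quote the known fact that $\LieL$ is finitely generated by $d,v$ and has subexponential (indeed polynomial) growth. This is established in~\cite{petrogradsky-s-z:nillie} and recalled in~\cite{bartholdi:wreathalg}, where the Gelfand--Kirillov dimension is computed from the $\mathbb R_+$-grading. Hence $\LieL\in\textsf{SL}_0\subseteq\textsf{SL}$, and by Theorem~\ref{thm:liesubexp} it is also amenable.

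For non-membership in \textsf{EL}, I will suppose $\LieL\in\textsf{EL}$ and take the least $\alpha$ with $\LieL\in\textsf{EL}_\alpha$. We have $\alpha>0$, because $\LieL$ is infinite-dimensional (being self-similar, recurrent and transitive on all $X^{\otimes n}$) and non-Abelian. Also $\alpha$ is not a limit ordinal, so $\alpha=\beta+1$. As in Corollary~\ref{thm:amennotelem1}, finite generation excludes the directed-union case. So $\LieL$ is an extension $0\to\LieJ\to\LieL\to\LieQ\to0$ with $\LieJ,\LieQ\in\textsf{EL}_\beta$.

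The obstacle is that $\LieL$ is not simple, so I need a substitute. I aim for a just-infinite-type statement: every nonzero ideal $\LieJ$ has finite codimension and contains a copy of $\LieL$ up to finite-dimensional pieces.

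The key steps, in order, are as follows. First, using transitivity and the branching $\phi(\LieK)\supseteq X\otimes\LieK$, show that every nonzero ideal $\LieJ$ contains $\phi_n^{-1}(X^{\otimes n}\otimes\LieK')$ for some $n$, where $\LieK'=[\LieK,\LieK]$ (or $\LieK$ itself). This is the standard branch-group argument: take $0\neq j\in\LieJ$, then bracket with elements of the rigid level stabilisers, which act on separate coordinates, to get something of the form $x^{i}\otimes k$ with $k\ne0$. Next, use recurrence to bracket this up to $X^{\otimes n}\otimes\LieK'$. Second, show that $\LieK'$ has finite codimension in $\LieL$, from the grading and the explicit generators. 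It follows that $\LieL/\LieJ$ is finite-dimensional when $\LieJ\ne0$. Third, by recurrence, $\LieJ$ surjects via the projection $(\varepsilon\otimes1)\phi_n$, restricted to a suitable coordinate subalgebra, onto a finite-codimension subalgebra of $\LieL$ that contains $\LieK'$.

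From the third step, I then want to deduce $\LieL\in\textsf{EL}_\beta$ from $\LieJ\in\textsf{EL}_\beta$. The closure of $\textsf{EL}_\beta$ under subalgebras and quotients gives that $\LieK'$ lies in $\textsf{EL}_\beta$. What remains is to climb from $\LieK'$ back to $\LieL$ without raising the ordinal. I would try to show that the subalgebra $\LieK'$ itself maps onto $\LieL$ under some coordinate projection after going down enough levels, using recurrence iterated along $\mathfrak D$. That would make $\LieL$ a quotient of a subalgebra of $\LieJ$, so $\LieL\in\textsf{EL}_\beta$, contradicting minimality. The case $\LieJ=0$ gives $\LieL\cong\LieQ\in\textsf{EL}_\beta$ directly.

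I expect this last step to be the main difficulty: the precise ideal-theoretic bookkeeping that turns $X\otimes\LieK$ back into a surjection onto all of $\LieL$, and not merely onto a finite-codimension subalgebra. If that fails, my fallback is to prove a stronger lemma: every finite-codimension subalgebra of $\LieL$ contains a section that surjects onto $\LieL$. I would establish it by the same level-descent argument, using that $\mathfrak D$ has finite codimension.
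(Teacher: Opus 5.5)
You have the same skeleton as the paper's proof. Finite Gelfand--Kirillov dimension puts $\LieL$ in $\textsf{SL}_0$. A least ordinal $\alpha=\beta+1$ and finite generation reduce to an extension $0\to\LieJ\to\LieL\to\LieQ\to0$ with $0\neq\LieJ\in\textsf{EL}_\beta$. Branching then places $[\LieK,\LieK]$ inside $\LieJ$ as a subalgebra; the paper simply cites Proposition~3.8 of \cite{bartholdi:wreathalg} for $X^{\otimes n}\otimes[\LieK,\LieK]\subseteq\phi^n(\LieJ)$.

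There is, however, a genuine gap at the step you yourself flag: recovering $\LieL$ as a subquotient of $[\LieK,\LieK]$. This is where the real content of the proof lies, and your proposal does not supply it. Recurrence only says that $s=(\varepsilon\otimes\id)\phi$ maps $\mathfrak D$ onto $\LieL$. It says nothing about $s(\LieI\cap\mathfrak D)$ for a proper ideal $\LieI$. For $\LieI=[\LieK,\LieK]$, branching by itself only guarantees $s(\LieI\cap\mathfrak D)\supseteq\LieI$, via the elements $y$ with $\phi(y)=1\otimes k'$. So ``recurrence iterated along $\mathfrak D$'' is not yet an argument. Your fallback lemma about arbitrary finite-codimension subalgebras is strictly stronger than what you need, and you give no reason why finite codimension of $\mathfrak D$ would yield it. Your claim that every nonzero ideal has finite codimension is neither justified by what you wrote nor used afterwards.

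The paper closes the gap with two concrete ingredients.

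First, it upgrades $[\LieK,\LieK]$ to $\LieK$. The map $\iota\colon\LieK\to\LieK$ defined by $\phi(\iota(k))=1\otimes k$ is an injective Lie homomorphism. It preserves $[\LieK,\LieK]$ and multiplies degrees by the dilation $\lambda>1$. Since $\LieK$ is finitely generated, $\LieK/[\LieK,\LieK]$ is finite-dimensional and concentrated in bounded degrees. Hence $\iota^m(\LieK)\subseteq[\LieK,\LieK]$ for large $m$, and $1^{\otimes n}\otimes\iota^m(\LieK)\subseteq\phi^n(\LieJ)$ exhibits a copy of $\LieK$ inside $\LieJ$.

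Second, it iterates sections on \emph{ideals}. The set $\Sigma(\LieI)=s(\LieI\cap\mathfrak D)$ is again an ideal; this is where surjectivity of $s$ enters. The defining recursion gives, for $u_i=(\operatorname{ad}d)^i(v)$, the formulas $\phi(u_i)=\alpha_ix^{p-1-i}\otimes v$ and $\phi(c)=\alpha_1x^{p-2}\otimes u_1$ with $\alpha_i\in\Bbbk^\times$, together with $s(v)=d$. From these one gets $u_1\in\Sigma(\LieK)$, then $v\in\Sigma^2(\LieK)$, then $d,v\in\Sigma^3(\LieK)$, whence $\Sigma^3(\LieK)=\LieL$. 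The ideal property is essential: after each section you apply $\operatorname{ad}d$ to push the $x$-exponent down to $0$ before taking the next section.

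With these two ingredients your preferred route from $[\LieK,\LieK]$ also works. Monotonicity of $\Sigma$ gives $\Sigma^j([\LieK,\LieK])\supseteq\iota^{m-j}(\LieK)$, and hence $\Sigma^{m+3}([\LieK,\LieK])=\LieL$. What is missing is any mechanism by which recurrence, transitivity and regular branching alone would force iterated sections of an ideal to exhaust $\LieL$. The paper instead reads this off the explicit recursion for $d$ and $v$.
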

\begin{proof}
The grading of $\LieL$ has dilation $\lambda>1$ and positive-degree generators. It follows from~\cite{bartholdi:wreathalg}*{Proposition~2.17} that $\LieL$ has finite Gelfand-Kirillov dimension, hence subexponential growth. Thus $\LieL\in\textsf{SL}_0$ and $\LieL$ is amenable by Theorem~\ref{thm:liesubexp}.

Suppose, for a contradiction, that $\LieL$ is elementarily amenable, and choose the least ordinal $\alpha$ with $\LieL\in\textsf{EL}_\alpha$. Since $\LieL$ is infinite-dimensional and nonabelian, $\alpha>0$; by minimality it is not a limit ordinal. Thus $\alpha=\beta+1$ for some $\beta$. Since $\LieL$ is finitely generated, it cannot be a proper directed union of members of $\textsf{EL}_\beta$; hence it occurs in an exact sequence
\[
 0\longrightarrow\LieJ\longrightarrow\LieL\longrightarrow\LieQ\longrightarrow0
\]
with $\LieJ,\LieQ\in\textsf{EL}_\beta$. Neither $\LieJ=0$ nor $\LieJ=\LieL$ is possible, since either case would already put $\LieL$ in $\textsf{EL}_\beta$. In particular, $\LieJ$ is nonzero.

We first show that $\LieK$ occurs as a subquotient---in fact as a subalgebra---of every nonzero ideal of $\LieL$. By~\cite{bartholdi:wreathalg}*{Proposition~3.8}, for some $n$ one has, under the iterated self-similarity embedding,
\[
 X^{\otimes n}\otimes[\LieK,\LieK]\subseteq\phi^n(\LieJ).
\]
Regular branching gives, for every $k\in\LieK$, a unique element $\iota(k)\in\LieK$ such that $\phi(\iota(k))=1\otimes k$. Thus $\iota\colon\LieK\hookrightarrow\LieK$ is an injective Lie homomorphism and
\[
 \phi^m(\iota^m(k))=1^{\otimes m}\otimes k.
\]
Moreover, $\deg\iota(k)=\lambda\deg k$ for homogeneous $k$. The proof of~\cite{bartholdi:wreathalg}*{Corollary~3.9} shows that the finite-codimension branching ideal $\LieK$ is finitely generated; in particular the graded vector space $\LieK/[\LieK,\LieK]$ is finite-dimensional. Choose finitely many homogeneous elements $S\subseteq\LieK$ whose images span it, so that $\LieK=\langle S\rangle_\Bbbk+[\LieK,\LieK]$. Since $\iota$ preserves $[\LieK,\LieK]$, and the degrees of $\iota^m(s)$ tend to infinity for $s\in S$, for all sufficiently large $m$ one has
\[
 \iota^m(\LieK)\subseteq[\LieK,\LieK].
\]
Consequently
\[
 1^{\otimes n}\otimes\iota^m(\LieK)
 \subseteq X^{\otimes n}\otimes[\LieK,\LieK]
 \subseteq\phi^n(\LieJ).
\]
Pulling back through the injective map $\phi^n$ gives a subalgebra of $\LieJ$ isomorphic to $\LieK$. Since $\LieJ\in\textsf{EL}_\beta$ and $\textsf{EL}_\beta$ is closed under subalgebras, we obtain $\LieK\in\textsf{EL}_\beta$.

It remains to recover $\LieL$ as a subquotient of $\LieK$. Recalling the subalgebra $\mathfrak D$ defined in~\eqref{eq:D}, define the section homomorphism
\[
 s=(\varepsilon\otimes\id)\phi\colon\mathfrak D\longrightarrow\LieL,
\]
where $\varepsilon\colon X\to\Bbbk$ is evaluation at $x=0$. Recurrence says that $s$ is surjective. For every ideal $\LieI\triangleleft\LieL$, put
\[
 \Sigma(\LieI)=s(\LieI\cap\mathfrak D).
\]
Then $\Sigma(\LieI)$ is an ideal of $\LieL$: if $y=s(b)$ with $b\in \LieI\cap\mathfrak D$ and $a\in\LieL$, choose $\widetilde a\in\mathfrak D$ with $s(\widetilde a)=a$ and note that $[a,y]=s([\widetilde a,b])$. Moreover, $\Sigma(\LieI)$ is a quotient of the subalgebra $\LieI\cap\mathfrak D$, so it is a subquotient of $\LieI$.

We claim that $\Sigma^3(\LieK)=\LieL$. Put $u_i=(\operatorname{ad}d)^i(v)$ for $1\leq i\leq p-1$. Directly from the recursion,
\[
 \phi(u_i)=\alpha_i x^{p-1-i}\otimes v
 \qquad(\alpha_i\in\Bbbk^\times),
\]
so $s(u_{p-1})$ is a nonzero multiple of $v$. On the other hand, if $u_1=[d,v]$, then
\[
 \phi(c)=\alpha_1x^{p-2}\otimes u_1.
\]
Thus $w=(\operatorname{ad}d)^{p-2}(c)$ lies in $\LieK\cap\mathfrak D$ and $s(w)$ is a nonzero multiple of $u_1$. Hence $u_1\in\Sigma(\LieK)$. Since $\Sigma(\LieK)$ is an ideal, it contains $u_{p-1}$, and therefore $v\in\Sigma^2(\LieK)$. The ideal $\Sigma^2(\LieK)$ again contains $u_{p-1}$, so $v\in\Sigma^3(\LieK)$; and since $v\in\mathfrak D$ and $s(v)=d$, we also have $d\in\Sigma^3(\LieK)$. As $\LieL=\langle d,v\rangle$, the claim follows.

Each application of $\Sigma$ takes a quotient of a subalgebra, and the subquotient relation is transitive. Hence $\LieL$ is a subquotient of $\LieK$. Since $\LieK\in\textsf{EL}_\beta$ and $\textsf{EL}_\beta$ is closed under subalgebras and quotients, this puts $\LieL$ in $\textsf{EL}_\beta$, contradicting the minimality of $\alpha$.
\end{proof}

%%%%%%%%%%%%%%%%%%%%%%%%%%%%%%%%%%%%%%%%%%%%%%%%%%%%%%%%%%%%%%%%
\section{An amenable Lie algebra of exponential growth}\label{ss:expgrowth}

We show that locally subexponential growth is not preserved under
extensions. This already fails when the kernel is locally finite (which we take to mean locally finite-dimensional, as usual) and the quotient is
one-dimensional:

\begin{thm}\label{thm:expogrowth}
There exists a finitely generated Lie algebra \(\LieL\) of exponential growth fitting in an exact sequence
\[
0\longrightarrow \LieK
 \longrightarrow \LieL
 \longrightarrow \Bbbk t
 \longrightarrow 0
\]
with \(\LieK\) locally finite.

In particular, $\LieL$ is amenable, being the extension of two elementarily amenable Lie algebras.
\end{thm}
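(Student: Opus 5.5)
The plan is to build $\LieL$ as a semidirect product $\LieK\rtimes\Bbbk t$, where $\LieK$ is a Lie algebra of finitary matrices indexed by a set $X$ of exponential growth and $t$ acts by an outer derivation that transports supports along $X$. The amenability clause is then immediate. $\LieK$ is a directed union of finite-dimensional Lie algebras, hence in $\textsf{EL}_1$. $\Bbbk t$ is Abelian. Theorem~\ref{thm:extension} (or directly the definition of $\textsf{EL}$) gives that $\LieL$ is elementarily amenable and amenable. All the work goes into finite generation and exponential growth.

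\emph{Construction.} Let $X$ be the vertex set of the $3$-regular tree, or equivalently the free group $\langle a,b\rangle$ acting on itself. Let $\LieK=\mathfrak{gl}_{\mathrm{fin}}(X)$ be the matrices $\sum\lambda_{xy}E_{xy}$ with finitely many nonzero entries. It is locally finite, since any finite set of such matrices lies in $\mathfrak{gl}(Y)$ for a finite $Y\subseteq X$. Let $T$ be the adjacency operator of the Cayley graph, $T=\sum_{x}\sum_{s\in\{a^{\pm1},b^{\pm1}\}}E_{xs,x}$. This is an infinite matrix with finitely many nonzero entries in each row and column. Hence $[T,K]$ is finitary for finitary $K$, and $D=\operatorname{ad}T$ is a derivation of $\LieK$. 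Set $\LieL=\LieK\rtimes\Bbbk t$ with $[t,k]=D(k)$. This gives the exact sequence in the statement, and it remains to find a finite generating set.

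\emph{Finite generation.} I would show that $t$ and $E_{11}$ (with $1\in X$ the identity) generate $\LieL$. The element $D(E_{11})=\sum_s(E_{s,1}-E_{1,s^{-1}})$ is again supported near $1$. Bracketing with $E_{11}$ and projecting, using $[E_{11},E_{s,1}]=-E_{s,1}$ and $[E_{11},E_{1,y}]=E_{1,y}$, separates row and column parts: one obtains $\sum_sE_{s,1}$ and $\sum_sE_{1,s}$. Iterating $D$ and bracketing then yields $E_{x,1}$-type sums over spheres. To break the symmetry among the $s$, I would add one more finitary generator distinguishing $a$ from $b$, say $E_{a,a}$, and conclude that every $E_{xy}$ lies in the generated algebra by induction on the word lengths $|x|+|y|$. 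Adding finitely many generators is harmless.

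\emph{Exponential growth (main obstacle).} The upper bound on balls is easy. The lower bound needs $\gg c^n$ linearly independent elements in the $n$-ball with $c>1$. Elements of $\LieK$ in the $n$-ball are supported in $B_X(Cn)^2$, so the only risk is cancellation. I would filter $\LieK$ by the word length $\max(|x|,|y|)$ of the support. Then I would exhibit, for each reduced word $w$ of length $n$ in a fixed sub-semigroup, say positive words in $a,ab$, an explicit bracket expression of length $O(n)$ in the generators. Its leading term in this filtration should be a nonzero multiple of $E_{w,1}$. The idea is to alternate $D$ with brackets against $E_{a,a}$-type projectors that kill all branches except the desired letter. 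Distinct leading terms are independent, which gives at least $2^{n/C}$ dimensions in the $O(n)$-ball. The delicate point is checking that the projections genuinely isolate one branch without cancelling the leading coefficient. If this becomes messy, I would fall back on replacing $T$ by a weighted operator $\sum_x(E_{xa,x}+\mu E_{xb,x})$ with generic $\mu$, so that distinct branches carry distinct polynomial weights in $\mu$ and cannot cancel.
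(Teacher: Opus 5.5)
There is a genuine gap in the construction itself, not just in the bookkeeping. An algebra of the form you propose can never have exponential growth, whatever $X$ and whatever row- and column-finite transport operator $T$ (weighted or not) you choose.

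\textbf{The growth obstruction.} The map $k+\lambda t\mapsto k+\lambda T$ embeds $\LieL$ in the associative algebra of row- and column-finite matrices over $X$. Take generators $g_i=k_i+\lambda_it$ ($1\le i\le r$), and let $Y\subseteq X$ be a finite set containing all row and column indices of the $k_i$. A commutator of length $m\ge2$ in the $g_i$ expands into associative words of length $m$ in $T,k_1,\dots,k_r$ in which some $k_j$ occurs; the pure $T$-term is a multiple of $[T,\dots,T]=0$. Each such word has the form $T^{i}MT^{j}$ with $M$ supported on $Y\times Y$. Hence the $n$-ball lies in $\operatorname{span}\{g_i\}+\operatorname{span}\{T^iE_{yy'}T^j: y,y'\in Y,\ i+j\le n\}$, which has dimension at most $r+|Y|^2\binom{n+2}{2}$. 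So the growth is at most quadratic.

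Your heuristic that ``the only risk is cancellation'' is therefore wrong. Everything reachable is confined to $\operatorname{span}\{T^ie_y\}\otimes\operatorname{span}\{e_{y'}^{*}T^j\}$, each factor of dimension at most $|Y|(n+1)$. No choice of projectors like $E_{aa}$, and no generic weight $\mu$, can isolate exponentially many branches. A projector at depth $k$ would have to be transported there by $\operatorname{ad}T$, which smears it over a whole sphere.

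\textbf{Finite generation also fails.} The Cayley graph of $\langle a,b\rangle$ is a $4$-regular tree, not $3$-regular. The tree automorphism swapping the branches through $b$ and $b^{-1}$ at $1$ fixes $1$ and $a$. Its permutation matrix therefore commutes with $T,E_{11},E_{aa}$, hence with everything they generate, but not with $E_{bb}$. The same argument, with an automorphism fixing a large ball containing $Y$ pointwise, shows that $\mathfrak{gl}_{\mathrm{fin}}(X)\rtimes\Bbbk t$ is not finitely generated at all. Passing to the subalgebra generated keeps the kernel locally finite, but then the quadratic bound applies.

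\textbf{What is missing.} You need a kernel that is locally finite even though its generators have infinite rank, so that a single generator can carry infinitely many independent ``coefficients''. The paper does this in four steps.
\begin{enumerate}
\item It takes the locally matrix algebra $\A=\bigcup_r\A_r$ with $\A_{r+1}=\operatorname{End}_\Bbbk(\A_r)=\A_rp_r\A_r$. The profile $b_0=1$, $b_1=E_{12}$, $b_2=E_{21}$, $b_{r+3}=p_r$, with weights $i+1$, has exponential weighted growth, because dimensions square while the weighted radius roughly doubles.
\item It stores the whole profile in one operator $C(e_j)=e_0b_j$ on the free right $\A$-module $\bigoplus_je_j\A$. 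Together with the shift $S$, the word $CS^{i_1}C\cdots S^{i_m}C$ reads off $b_{i_1}\cdots b_{i_m}$ at linear cost.
\item It checks that the ideal generated by $C$ is spanned by the $\rho_{p,u,q}$, is locally finite, and has quotient $\Bbbk[s]$.
\item It converts these associative words into Lie commutators of comparable length via $t=E_{22}(S)$, $x=E_{12}(C)$, $y=E_{23}(C)$, $z=E_{32}(C)$ in $M_3(\AB)^{(-)}$.
\end{enumerate}

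Your deduction of amenability from the exact sequence is fine.
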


Here is a sketch of the idea. If we can find an associative algebra $\AB$ of exponential growth and an ideal $\AJ\triangleleft\AB$ which is locally finite and satisfies $\AB/\AJ\cong\Bbbk[s]$, then we are done: there is a subalgebra $\LieL$ of $M_3(\AB)^{-}$ that maps onto $\Bbbk s\le\Bbbk[s]^{-}$ with kernel $\LieK$ contained in $M_3(\AJ)$, see~\S\ref{ss:liecomm}, and~\cite{alahmadi+:fg} for a similar construction.

Such an associative algebra $\AB$, produced as the monomial algebra associated with a well-chosen subshift, appears in~\cite{bell-greenfeld:monomial}*{\S6.2}. We give an alternative construction, based on a \emph{locally matrix} algebra~\cite{kuroshkin:locallymatrix}, obtained as the union of a tower of matrix algebras of doubly-exponentially-growing dimension, each of which is the endomorphism algebra of the previous one. This (associative) algebra is locally finite. We encode a sequence of elements that ``connect'' each matrix algebra to the next one, obtaining in this manner a two-generator algebra of endomorphisms.

\subsection{Growth in a locally matrix algebra}
We begin with a rank-one amplification observation.

\begin{lem}[Rank-one amplification]
\label{lem:rank-one-amplification}
Set \(B=M_d(\Bbbk)\), regard \(B\) as a subalgebra of
\(\End_\Bbbk(B)\) through the left regular representation
\[
a\mapsto \lambda_a,\qquad
\lambda_a(x)=ax,
\]
and let $\tr\colon B\rightarrow \Bbbk$ denote the ordinary matrix trace.  Define \(p\in\End_\Bbbk(B)\) by
\[
p(x)=\tr(x)1.
\]
Then
\[
\End_\Bbbk(B)
=
\langle\lambda_a p\lambda_b:a,b\in B\rangle_\Bbbk.
\]
Equivalently, after identifying \(B\) with its left regular image,
\[
\End_\Bbbk(B)=BpB.
\]
\end{lem}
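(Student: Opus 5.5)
We compute the composite $\lambda_a p\lambda_b$ explicitly and then show that these composites already span a space of dimension $\dim\End_\Bbbk(B)=d^4$. For $x\in B$,
\[
(\lambda_a p\lambda_b)(x)=a\,\tr(bx)\,1=\tr(bx)\,a .
\]
So $\lambda_a p\lambda_b$ is the rank-one operator $x\mapsto \tr(bx)a$, namely $a\otimes\beta_b$ under the identification $\End_\Bbbk(B)\cong B\otimes B^*$, where $\beta_b=\tr(b\,\cdot\,)\in B^*$.

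The key step is that the trace pairing $(b,x)\mapsto\tr(bx)$ on $M_d(\Bbbk)$ is nondegenerate, over any field. Taking matrix units, $\tr(E_{ji}E_{kl})=\delta_{ik}\delta_{jl}$, so $b\mapsto\beta_b$ sends the basis $(E_{ji})$ to the dual basis of $(E_{ij})$. Hence $b\mapsto\beta_b$ is a linear isomorphism $B\to B^*$.

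It follows that the span of the $\lambda_a p\lambda_b$ is the span of all $a\otimes\varphi$ with $a\in B$ and $\varphi\in B^*$. Every endomorphism of a finite-dimensional space is a sum of rank-one operators, for instance $T=\sum_{i,j}T(E_{ij})\otimes E_{ij}^*$. Concretely, any $T$ can be written as
\[
T=\sum_{i,j}\lambda_{T(E_{ij})}\,p\,\lambda_{E_{ji}},
\]
and this can be checked directly on matrix units.

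Finally, the reformulation $\End_\Bbbk(B)=BpB$ is just this statement rewritten after identifying $a$ with $\lambda_a$; the reverse inclusion $BpB\subseteq\End_\Bbbk(B)$ is trivial. I expect no real obstacle. The only point needing care is nondegeneracy of the trace form: one might worry about positive characteristic, but the matrix-unit computation above is characteristic-free.
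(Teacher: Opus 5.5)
Your proof is correct and follows the paper's argument. You compute $\lambda_a p\lambda_b(x)=\tr(bx)\,a$, use nondegeneracy of the trace pairing to obtain every rank-one operator, and conclude because rank-one operators span $\End_\Bbbk(B)$; your matrix-unit check $\tr(E_{ji}E_{kl})=\delta_{ik}\delta_{jl}$ and the explicit formula $T=\sum_{i,j}\lambda_{T(E_{ij})}\,p\,\lambda_{E_{ji}}$ just make explicit, and characteristic-free, what the paper asserts without computation.
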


\begin{proof}
For \(a,b,x\in B\),
\[
\lambda_a p\lambda_b(x)
=
a\,\tr(bx).
\]
The trace pairing
\[
B\times B\rightarrow \Bbbk,
\qquad
(b,x)\mapsto\tr(bx),
\]
is nondegenerate.  Hence the functionals
\[
x\mapsto\tr(bx),\qquad b\in B,
\]
span \(B^*\).  Allowing \(a\) to range over \(B\), the operators
\(\lambda_a p\lambda_b\) therefore span all rank-one endomorphisms of
the vector space \(B\).  Rank-one endomorphisms span
\(\End_\Bbbk(B)\).
\end{proof}

Define inductively
\[\A_0=M_2(\Bbbk),
\qquad
\A_{r+1}=\End_\Bbbk(\A_r),
\]
and embed \(\A_r\) into \(\A_{r+1}\) by the left
regular representation.  Thus we obtain a nested sequence
\[
\A_0\subseteq\A_1\subseteq\A_2\subseteq\cdots.
\]
Each \(\A_r\) is a full matrix algebra. Define
\[
p_r\in\A_{r+1}=\End_\Bbbk(\A_r)
\]
by
\[
p_r(x)=\tr(x)1.
\]
Lemma~\ref{lem:rank-one-amplification} gives
\begin{equation}
\label{eq:amplification}
\A_{r+1}
=
\A_r p_r\A_r.
\end{equation}

Set
\[
\A=\bigcup_{r\geq 0}\A_r,
\]
a locally finite associative algebra. Choose the standard matrix units \(E_{ij}\) in
\(\A_0=M_2(\Bbbk)\), and define a sequence
\((b_i)_{i\geq 0}\subseteq\A\) by
\[
b_0=1,\qquad
b_1=E_{12},\qquad
b_2=E_{21},\qquad
b_{r+3}=p_r\quad(r\geq 0).
\]
Assign to \(b_i\) the weight
\[
\operatorname{wt}(b_i)=i+1,
\]
and \(n\geq 0\) set
\[P(n)=\Big\langle b_{i_1}\cdots b_{i_m}:\sum_{\ell=1}^m(i_\ell+1)\leq n\Big\rangle_\Bbbk\le\A.\] 
The empty product is allowed.

\begin{lem}[Exponential weighted profile growth]

For all sufficiently large \(n\),
\[
\dim_\Bbbk P(n)
\geq
\exp\left(\frac{\log 4}{20}\,n\right).
\]
In particular, the weighted algebra generated by the sequence
\((b_i)\) has exponential growth.
\end{lem}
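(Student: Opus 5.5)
The plan is to exhibit, for each $r$, a weight $w_r$ with $\A_r\subseteq P(w_r)$, and then compare $w_r$ with $\dim\A_r$. Two facts drive this: the amplification identity \eqref{eq:amplification}, and the observation that $P(w)P(w')\subseteq P(w+w')$, which holds because weights add under concatenation of words.

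\emph{Base case.} The space $\A_0=M_2(\Bbbk)$ is spanned by
\[
1=b_0,\quad E_{12}=b_1,\quad E_{21}=b_2,\quad E_{11}=b_1b_2,\quad E_{22}=b_2b_1.
\]
These have weights $1,2,3,5,5$. Hence $\A_0\subseteq P(5)$, and we set $w_0=5$.

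\emph{Inductive step.} Suppose $\A_r\subseteq P(w_r)$. By \eqref{eq:amplification}, $\A_{r+1}$ is spanned by the products $a\,p_r\,b$ with $a,b\in\A_r$. Here we use the identification of $\A_r$ with its left regular image inside $\A_{r+1}$, and the nesting $\A_r\subseteq\A_{r+1}\subseteq\A$, so that all products are computed in $\A$. Since $p_r=b_{r+3}$ has weight $r+4$, we get
\[
\A_{r+1}\subseteq P(w_r)\,b_{r+3}\,P(w_r)\subseteq P(2w_r+r+4).
\]
So we may take $w_{r+1}=2w_r+r+4$.

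\emph{Solving the recursion.} Put $u_r=w_r+r+5$. Then
\[
u_{r+1}=2w_r+r+4+(r+1)+5=2u_r,
\]
and $u_0=10$. Therefore
\[
w_r=10\cdot2^r-r-5\le10\cdot2^r.
\]

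\emph{Dimension count.} Write $\A_r=M_{d_r}(\Bbbk)$. Then $d_0=2$ and, since $\A_{r+1}=\End_\Bbbk(\A_r)$, we have $d_{r+1}=\dim\A_r=d_r^2$. Hence $d_r=2^{2^r}$ and
\[
\dim\A_r=4^{2^r}.
\]
Because $P$ is monotone in $n$, for every $n\ge10\cdot2^r$ we obtain
\[
\dim P(n)\ge4^{2^r}=\exp\bigl(\log4\cdot2^r\bigr).
\]

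\emph{Interpolation.} Take any $n\ge10$ and choose $r$ with $10\cdot2^r\le n<10\cdot2^{r+1}$. Then $2^r>n/20$, so
\[
\dim P(n)\ge\exp\Bigl(\tfrac{\log4}{20}\,n\Bigr).
\]
This is the claimed bound, valid for all $n\ge10$.

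The only delicate point is the inductive inclusion $\A_{r+1}\subseteq P(2w_r+r+4)$. It requires checking that the embeddings $\A_r\hookrightarrow\A_{r+1}$ are compatible, so that the identity $\A_{r+1}=\A_rp_r\A_r$ holds inside $\A$ itself. This follows from the construction of $\A$ as the union of the nested chain. Everything else is arithmetic.
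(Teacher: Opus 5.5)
Your proof is correct and follows the paper's argument essentially step for step. It uses the same base case $\A_0\subseteq P(5)$, the same induction through $\A_{r+1}=\A_rp_r\A_r$ with the recursion $w_{r+1}=2w_r+r+4$, the same closed form $10\cdot2^r-r-5$, and the same count $\dim\A_r=4^{2^r}$. The only cosmetic difference is in the interpolation: you bracket $n$ between $10\cdot2^r$ and $10\cdot2^{r+1}$ rather than between $s_r$ and $s_{r+1}$, so your bound holds for $n\ge10$ instead of $n\ge5$, which is equally fine for ``sufficiently large''.
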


\begin{proof}
Set
\[
s_0=5,
\qquad
s_{r+1}=2s_r+r+4.
\]
We claim that
\begin{equation}
\label{eq:stage-contained-in-profile-ball}
\A_r\subseteq P(s_r)
\end{equation}
for every \(r\geq 0\).

For \(r=0\), the algebra \(\A_0=M_2(\Bbbk)\) is spanned by
\[
1=b_0,\qquad
E_{12}=b_1,\qquad
E_{21}=b_2,
\]
and
\[
E_{11}=E_{12}E_{21}=b_1b_2,
\qquad
E_{22}=E_{21}E_{12}=b_2b_1.
\]
All these elements have weight at most \(5\), so
\(\A_0\subseteq P(5)\).

Suppose now that \(\A_r\subseteq P(s_r)\).  By
\eqref{eq:amplification}, we have $\A_{r+1}=\A_r p_r\A_r$, and since \(p_r=b_{r+3}\) has weight \(r+4\), we get
\[
\A_{r+1}
\le
P(2s_r+r+4)
=
P(s_{r+1}).
\]
This proves \eqref{eq:stage-contained-in-profile-ball}.

The recurrence for \(s_r\) has the explicit solution
\begin{equation}
\label{eq:profile-radius}
s_r=10\cdot 2^r-r-5.
\end{equation}
On the other hand,
\[
\dim_\Bbbk\A_0=4,
\qquad
\dim_\Bbbk\A_{r+1}
=
\bigl(\dim_\Bbbk\A_r\bigr)^2,
\]
and therefore
\[
\dim_\Bbbk\A_r=4^{2^r}.
\]

Let \(n\geq 5\), and choose \(r\) such that
\[
s_r\leq n<s_{r+1}.
\]
Then
\[
\dim_\Bbbk P(n)
\geq
\dim_\Bbbk\A_r
=
4^{2^r}.
\]
Moreover, by \eqref{eq:profile-radius},
\[
n<s_{r+1}
=
20\cdot 2^r-r-6
<
20\cdot 2^r.
\]
Hence \(2^r>n/20\), and therefore
\[
\dim_\Bbbk P(n)
\geq
4^{2^r}
>
4^{n/20}
=
\exp\left(\frac{\log 4}{20}\,n\right).\qedhere
\]
\end{proof}

\subsection{The shift-profile algebra}

Let
\[
V=\bigoplus_{j\geq 0} e_j\A
\]
be the free right \(\A\)-module with basis
\(e_0,e_1,\ldots\).  Define two right \(\A\)-linear
endomorphisms \(S,C\in\End_{\A}(V)\) by
\[
S(e_j)=e_{j+1},
\qquad
C(e_j)=e_0b_j.
\]
Thus \(S\) is the unilateral shift, while \(C\) stores the entire
profile \(b_0,b_1,\ldots\) in its zeroth row.  Since \(b_0=1\),
\[
C^2=C.
\]

Let
\[
\AB=\Bbbk\langle S,C\rangle
\subseteq\End_{\A}(V)
\]
be the unital associative algebra generated by \(S\) and \(C\). For \(p,q\geq 0\) and \(u\in\A\), define \(\rho_{p,u,q}\in\End_{\A}(V)\) by
\[
\rho_{p,u,q}(e_j)=e_pu b_{q+j}.
\]

\begin{lem}[Normal form for profile words]
\label{lem:profile-normal-form}
Every monomial in \(S,C\) containing at least one occurrence of \(C\)
is of the form \(\rho_{p,u,q}\), for suitable \(p,q\geq 0\) and
\(u\in\A\).

Moreover,
\begin{equation}
\label{eq:rho-product}
\rho_{p,u,q}\rho_{p',u',q'}
=
\rho_{p,\;u b_{q+p'}u',\;q'}.
\end{equation}
\end{lem}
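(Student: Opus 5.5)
The plan is to establish the composition rule \eqref{eq:rho-product} first, directly on basis vectors, and then obtain the normal form by induction on word length, tracking only how $S$ acts on either side of a $\rho$.

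\textbf{Composition rule.} All maps involved are right $\A$-linear, so it suffices to evaluate on the basis vectors $e_j$. One computes
\[
\rho_{p,u,q}\rho_{p',u',q'}(e_j)=\rho_{p,u,q}(e_{p'}u'b_{q'+j})=\rho_{p,u,q}(e_{p'})\,u'b_{q'+j}=e_p\,u\,b_{q+p'}\,u'\,b_{q'+j}.
\]
This is exactly $\rho_{p,\,ub_{q+p'}u',\,q'}(e_j)$, which proves \eqref{eq:rho-product}.

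\textbf{Base cases and the action of $S$.} Note that $C=\rho_{0,1,0}$, since $C(e_j)=e_0b_j=e_0\cdot1\cdot b_{0+j}$. Next record how $S$ interacts with $\rho$:
\[
S\rho_{p,u,q}=\rho_{p+1,u,q},
\]
because $S(e_pub_{q+j})=e_{p+1}ub_{q+j}$ by right linearity. On the other side,
\[
\rho_{p,u,q}S=\rho_{p,u,q+1},
\]
since $\rho_{p,u,q}(e_{j+1})=e_pub_{q+j+1}$.

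\textbf{Induction.} Let $w$ be a monomial containing at least one $C$, and write $w=S^{a}\,C\,w'$, where $C$ is its leftmost occurrence of $C$. We induct on the length of $w'$.
\begin{itemize}
\item If $w'$ is empty, then $w=S^{a}\rho_{0,1,0}=\rho_{a,1,0}$.
\item If $w'=S^{b}$ contains no $C$, then applying the right-multiplication rule $b$ times gives $w=\rho_{a,1,b}$.
\item If $w'$ contains a $C$, then by induction $w'=\rho_{p',u',q'}$. Hence $w=S^a\rho_{0,1,0}\rho_{p',u',q'}=S^a\rho_{0,\,b_{p'}u',\,q'}=\rho_{a,\,b_{p'}u',\,q'}$.
\end{itemize}
In every case $w$ has the form $\rho_{p,u,q}$ with $u\in\A$.

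\textbf{Main obstacle.} The only point requiring care is the order of factors in the right $\A$-module convention. Since $V$ is a right module and the maps are right $\A$-linear, $\rho(e_{p'}x)=\rho(e_{p'})x$. This is what places $b_{q+p'}$ between $u$ and $u'$ in \eqref{eq:rho-product}, rather than producing some other ordering.
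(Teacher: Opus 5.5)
Your proof is correct and is essentially the paper's argument: both evaluate on the basis vectors $e_j$ and use right $\A$-linearity. The only difference is organizational. The paper writes the word as $S^pCS^{i_1}C\cdots S^{i_m}CS^q$ and evaluates it on $e_j$ in one pass, obtaining the explicit $u=b_{i_1}\cdots b_{i_m}$. You instead derive the normal form by induction from \eqref{eq:rho-product} and the shift rules $S\rho_{p,u,q}=\rho_{p+1,u,q}$ and $\rho_{p,u,q}S=\rho_{p,u,q+1}$, which unwinds to the same $u$.
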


\begin{proof}
Every monomial containing \(C\) can be written as
\[
S^p C S^{i_1}C\cdots S^{i_m}C S^q
\]
for some \(p,q,i_1,\ldots,i_m\geq 0\), where \(m\) may be zero.
Applying this operator to \(e_j\) gives
\(
e_p b_{i_1}\cdots b_{i_m}b_{q+j}
\), so it is equal to
\(
\rho_{p,\;b_{i_1}\cdots b_{i_m},\;q}\).

For the multiplication formula, right \(\A\)-linearity gives
\[
\begin{aligned}
\rho_{p,u,q}\rho_{p',u',q'}(e_j)
&=
\rho_{p,u,q}
\bigl(e_{p'}u'b_{q'+j}\bigr)\\
&=
e_pu b_{q+p'}u'b_{q'+j},
\end{aligned}
\]
which is precisely
\(\rho_{p,\;u b_{q+p'}u',\;q'}(e_j)\).
\end{proof}

Let \(\AJ\triangleleft\AB\) be the two-sided ideal
generated by \(C\).

\begin{lem}[The locally finite profile ideal]
\label{lem:locally-finite-profile-ideal}
The associative algebra \(\AJ\) is locally finite, and
\[
\AB/\AJ\simeq \Bbbk[s],
\]
under the quotient map
\[
\pi\colon\AB\rightarrow \Bbbk[s],
\qquad
\pi(S)=s,\qquad
\pi(C)=0.
\]
\end{lem}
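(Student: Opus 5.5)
The plan is to use the normal form of Lemma~\ref{lem:profile-normal-form} to describe $\AJ$ explicitly, and then check the two claims separately: the quotient is $\Bbbk[s]$, and $\AJ$ is locally finite.

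\textbf{Step 1: describe $\AJ$.} Every monomial in $S,C$ is either a power $S^k$ or, by Lemma~\ref{lem:profile-normal-form}, some $\rho_{p,u,q}$ with $u\in\A$. The ideal $\AJ$ is spanned by the monomials that contain $C$. Indeed, a monomial containing $C$ lies in $\AJ$, and the span of such monomials is a two-sided ideal containing $C$. Hence $\AJ$ is spanned by elements $\rho_{p,u,q}$.

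\textbf{Step 2: the quotient.} The assignment $S\mapsto s$, $C\mapsto0$ has to define an algebra map $\pi$; the only issue is whether it respects the relations holding in $\AB$. I will show directly that $\AB=\Bbbk[S]\oplus\AJ$ as vector spaces, so that $\AB/\AJ$ is spanned by the images of $S^k$, and that these images are linearly independent.

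To see independence, suppose $\sum_k c_kS^k\in\AJ$, say equal to $\sum_i\rho_{p_i,u_i,q_i}$ with finitely many terms. Choose $j$ large and set $P=\max_i p_i$. Each $\rho_{p_i,u_i,q_i}(e_j)$ lies in $e_{p_i}\A$, so it is supported on the coordinates $e_0,\dots,e_P$. On the other hand, $\sum_k c_kS^k(e_j)=\sum_k c_ke_{j+k}$. If $j>P$, these coordinates are disjoint from $e_0,\dots,e_P$, so all $c_k$ vanish. Thus $\AB/\AJ\cong\Bbbk[s]$ via $\pi$.

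\textbf{Step 3: local finiteness (the main step).} Take finitely many elements of $\AJ$. By Step 1 they lie in the span of finitely many $\rho_{p_i,u_i,q_i}$, $i\in I$. Let $P=\max p_i$, $Q=\max q_i$, and choose $r$ so that $\A_r$ contains all $u_i$ together with all $b_m$ for $m\le P+Q$. This is possible because $\A$ is the union of the $\A_r$ and only finitely many $b_m$ are involved.

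Consider the set
\[
\mathcal R=\{\rho_{p,u,q}:p\le P,\ q\le Q,\ u\in\A_r\}.
\]
It spans a finite-dimensional subspace: for fixed $p,q$, the map $u\mapsto\rho_{p,u,q}$ is linear in $u$, and $\A_r$ is finite-dimensional. By the product formula~\eqref{eq:rho-product},
\[
\rho_{p,u,q}\rho_{p',u',q'}=\rho_{p,\,ub_{q+p'}u',\,q'},
\]
and here $q+p'\le P+Q$, so $b_{q+p'}\in\A_r$ and hence $ub_{q+p'}u'\in\A_r$. Therefore $\operatorname{span}\mathcal R$ is a finite-dimensional subalgebra containing the chosen elements, and $\AJ$ is locally finite.

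The only delicate point is this choice of $r$: the middle index $b_{q+p'}$ stays bounded precisely because the outer indices $p$ and $q$ are preserved under multiplication.
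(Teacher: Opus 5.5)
Your proof is correct and follows essentially the same route as the paper. You obtain the decomposition $\AB=\Bbbk[S]\oplus\AJ$ by observing that elements of $\AJ$ map into finitely many summands $e_p\A$ while nonzero polynomials in $S$ do not, and you get local finiteness by closing a finite span of operators $\rho_{p,u,q}$ under the product formula \eqref{eq:rho-product}. The only cosmetic difference is that you take the finite-dimensional coefficient algebra to be some $\A_r$ with $p,q$ ranging over intervals, whereas the paper takes the subalgebra generated by the $u$'s and the needed $b_{q+p}$, which is finite-dimensional by local finiteness of $\A$.
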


\begin{proof}
Every word in \(S,C\) either is a power of \(S\), or contains \(C\)
and hence belongs to \(\AJ\).  Thus
\[
\AB=\Bbbk[S]+\AJ.
\]

By Lemma~\ref{lem:profile-normal-form}, every element of
\(\AJ\) is a finite linear combination of operators
\(\rho_{p,u,q}\).  Each such operator has only one nonzero row,
namely row \(p\).  Consequently every element of \(\AJ\) has
only finitely many nonzero rows.

By contrast, if
\[
f(S)=\alpha_0+\alpha_1S+\cdots+\alpha_dS^d
\]
is a nonzero polynomial in \(S\), then
\[
f(S)e_j=\sum_{i=0}^d\alpha_i e_{j+i}
\]
for every \(j\geq 0\).  Thus \(f(S)\) has infinitely many nonzero
rows.  It follows that
\[
\Bbbk[S]\cap\AJ=0.
\]
Hence
\[
\AB=\Bbbk[S]\oplus\AJ
\]
as vector spaces, and the quotient is naturally \(\Bbbk[s]\).

It remains to prove local finiteness.  Let
\(F\subseteq\AJ\) be finite.  Express the elements of \(F\) as
finite linear combinations of operators \(\rho_{p,u,q}\).  There are
finite sets
\[
P,Q\subseteq\mathbb N
\]
and a finite set \(U\subseteq\A\) such that every term
appearing in these expressions has
$p\in P$, $q\in Q$, $u\in U$. Let \(\mathscr U\subseteq\A\) be the associative subalgebra
generated by
$U\cup
\{b_{q+p}:q\in Q,\ p\in P\}$.
Since \(\A\) is locally finite, \(\mathscr U\) is
finite-dimensional.

Consider the finite-dimensional subspace
\[
\mathscr W=\left\langle\rho_{p,u,q}:p\in P,\ q\in Q,\ u\in\mathscr U\right\rangle_\Bbbk.
\]
Formula \eqref{eq:rho-product} shows that \(\mathscr W\) is closed under
multiplication: indeed,
\[
\rho_{p,u,q}\rho_{p',u',q'}
=
\rho_{p,\;u b_{q+p'}u',\;q'},
\]
and \(u b_{q+p'}u'\in\mathscr U\).  Thus the associative subalgebra
generated by \(F\) is contained in the finite-dimensional algebra
\(\mathscr W\).  Therefore \(\AJ\) is locally finite.
\end{proof}

The shift reads the profile with exactly the intended weighted cost.

\begin{lem}[Reading the profile]
\label{lem:reading-the-profile}
For \(i_1,\ldots,i_m\geq 0\), put
\[
U(i_1,\ldots,i_m)
=
C S^{i_1}C S^{i_2}\cdots C S^{i_m}C.
\]
Then
\begin{equation}
\label{eq:profile-read}
U(i_1,\ldots,i_m)(e_0)
=
e_0b_{i_1}\cdots b_{i_m}.
\end{equation}
Moreover, the word length of \(U(i_1,\ldots,i_m)\) in the generators
\(S,C\) is
\begin{equation}
\label{eq:associative-profile-cost}
1+\sum_{\ell=1}^m(i_\ell+1).
\end{equation}
\end{lem}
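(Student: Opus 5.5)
The plan is a direct induction on $m$, using only the definitions of $S$ and $C$ on the basis $e_j$ and their right $\A$-linearity; no earlier lemma is needed beyond those definitions. For \eqref{eq:profile-read}, I would read the word $U(i_1,\ldots,i_m)=CS^{i_1}CS^{i_2}\cdots CS^{i_m}C$ from right to left, applying it to $e_0$. The rightmost $C$ gives $C(e_0)=e_0b_0=e_0$, since $b_0=1$.

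I would then show by induction on $k$ (for $k=m,m-1,\ldots,1$) that the suffix $CS^{i_k}C\cdots CS^{i_m}C$ sends $e_0$ to $e_0b_{i_k}\cdots b_{i_m}$. The inductive step reduces to a single computation. Given $e_0u$ with $u\in\A$, we have $S^{i}(e_0u)=e_iu$ by right $\A$-linearity and $S(e_j)=e_{j+1}$. Then $C(e_iu)=C(e_i)u=e_0b_iu$. Thus the operator $CS^{i}$ sends $e_0u\mapsto e_0b_iu$, i.e.\ it left-multiplies the coefficient by $b_i$. Iterating this from the innermost factor outward gives $e_0b_{i_1}b_{i_2}\cdots b_{i_m}$, with the factors in the correct order. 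When $m=0$ the word is just $C$, and the formula reads $C(e_0)=e_0$, which agrees with the empty product. Alternatively, the whole identity is the case $p=q=j=0$ of Lemma~\ref{lem:profile-normal-form}: the word equals $\rho_{0,\,b_{i_1}\cdots b_{i_{m-1}},\,i_m}$ evaluated at $e_0$, giving $e_0b_{i_1}\cdots b_{i_{m-1}}b_{i_m}$. Either route works, and I would present the direct computation as a consistency check on the normal form.

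For \eqref{eq:associative-profile-cost}, I would simply count letters. The word contains $m+1$ occurrences of $C$, which gives $1+m$ letters, and $\sum_\ell i_\ell$ occurrences of $S$. The total is $1+m+\sum_\ell i_\ell=1+\sum_{\ell=1}^m(i_\ell+1)$.

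The only point needing care, and the one I expect to be the main obstacle, is bookkeeping rather than mathematics: the order of the factors $b_{i_\ell}$ must match the left-to-right order of the $S$-blocks. This holds because operators act from the left while the coefficients accumulate in $\A$ on the left of the earlier ones. It is also worth remarking that this cost matches the weight $\operatorname{wt}(b_i)=i+1$, up to the additive constant $1$, so that $P(n)$ embeds into the ball of radius $n+1$ of $\AB$ via $x\mapsto$ the coefficient at $e_0$.
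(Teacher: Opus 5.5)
Your direct computation is correct and is essentially the paper's own argument: read $CS^{i_1}C\cdots CS^{i_m}C$ from right to left using $b_0=1$ and right $\A$-linearity (each block $CS^{i}$ sends $e_0u\mapsto e_0b_iu$), then count the $m+1$ letters $C$ and the $\sum_\ell i_\ell$ letters $S$. One small slip is in your optional aside. As an operator, the word is $\rho_{0,\,b_{i_1}\cdots b_{i_m},\,0}$, which is the case $p=q=0$ of the normal form, whereas $\rho_{0,\,b_{i_1}\cdots b_{i_{m-1}},\,i_m}$ is the word with its final $C$ deleted. The two agree on $e_0$, so nothing in your proof is affected.
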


\begin{proof}
Equation \eqref{eq:profile-read} follows immediately by reading the
operator product from right to left.  The word contains \(m+1\)
copies of \(C\) and \(\sum_{\ell=1}^m i_\ell\) copies of \(S\), which
gives \eqref{eq:associative-profile-cost}.
\end{proof}

\subsection{Encoding associative products by Lie commutators}\label{ss:liecomm}

For an associative algebra \(B\), write \(B^{(-)}\) for the Lie
algebra with bracket
\[
[a,b]=ab-ba.
\]
For \(r\in\AB\), let \(E_{ij}(r)\) denote the elementary
\(3\times3\) matrix with entry \(r\) in position \((i,j)\).

In \(M_3(\AB)^{(-)}\), define
\[
t=E_{22}(S),\qquad
x=E_{12}(C),\qquad
y=E_{23}(C),\qquad
z=E_{32}(C).
\]

\begin{lem}[Matrix-unit encoding]
\label{lem:matrix-unit-encoding}
For \(r\in\AB\),
\begin{align}
[E_{12}(r),t]
&=E_{12}(rS),
\label{eq:append-S}\\
[[E_{12}(r),y],z]
&=E_{12}(rC).
\label{eq:append-C}
\end{align}

For a tuple
\[
\mathbf i=(i_1,\ldots,i_m)\in\mathbb N^m,
\]
there is consequently a left-normed Lie word \(X_{\mathbf i}\) in
\(t,x,y,z\) such that
\begin{equation}
\label{eq:encoded-profile-word}
X_{\mathbf i}
=
E_{12}
\left(
C S^{i_1}C S^{i_2}\cdots C S^{i_m}C
\right),
\end{equation}
and whose Lie length is
\begin{equation}
\label{eq:encoded-profile-length}
1+\sum_{\ell=1}^m(i_\ell+2).
\end{equation}
\end{lem}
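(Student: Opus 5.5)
Both identities \eqref{eq:append-S} and \eqref{eq:append-C} come from multiplying elementary matrices, $E_{ij}(a)E_{kl}(b)=\delta_{jk}E_{il}(ab)$. The statement about $X_{\mathbf i}$ then follows by induction on $m$ along the word $CS^{i_1}C\cdots S^{i_m}C$, read from left to right.

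For \eqref{eq:append-S}, compute
\[
[E_{12}(r),E_{22}(S)]=E_{12}(rS)-E_{22}(S)E_{12}(r)=E_{12}(rS),
\]
because the product $E_{22}E_{12}$ vanishes ($2\ne1$).

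For \eqref{eq:append-C}, first compute
\[
[E_{12}(r),E_{23}(C)]=E_{13}(rC)-E_{23}(C)E_{12}(r)=E_{13}(rC),
\]
again because $E_{23}E_{12}=0$. Then
\[
[E_{13}(rC),E_{32}(C)]=E_{12}(rC^2)-E_{32}(C)E_{13}(rC)=E_{12}(rC^2).
\]
Here $E_{32}E_{13}=0$, and $C^2=C$ gives $E_{12}(rC)$. The idempotence $C^2=C$ is the only point needing care: without it the bracket would append $C^2$ rather than $C$.

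For the words, start with $X_{()}=x=E_{12}(C)$, which has Lie length $1$. Given a left-normed word $X$ with $X=E_{12}(w)$, where $w$ ends in $C$:
\begin{itemize}
\item bracket on the right with $t$ a total of $i$ times to get $E_{12}(wS^{i})$, by \eqref{eq:append-S};
\item then bracket with $y$ and then with $z$ to get $E_{12}(wS^{i}C)$, by \eqref{eq:append-C}.
\end{itemize}
Each result is again left-normed, since we only ever bracket on the right.

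Each block $S^{i_\ell}C$ adds $i_\ell+2$ letters to the Lie word. Summing over $\ell=1,\dots,m$ together with the initial $x$ gives \eqref{eq:encoded-profile-word} with length $1+\sum_\ell(i_\ell+2)$, which is \eqref{eq:encoded-profile-length}.
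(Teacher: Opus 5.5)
Your proposal is correct and follows the paper's proof essentially step for step. Both bracket identities come from $E_{ij}(a)E_{kl}(b)=\delta_{jk}E_{il}(ab)$, with $C^2=C$ used for the second, and $X_{\mathbf i}$ is built from $x$ by right-bracketing $i_\ell$ times with $t$ and then once each with $y$ and $z$, so each stage contributes $i_\ell+2$ letters.
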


\begin{proof}
The elementary-matrix multiplication rule gives
\[
E_{ij}(a)E_{kl}(b)
=
\delta_{jk}E_{il}(ab).
\]
Therefore
\[
[E_{12}(r),E_{22}(S)]
=
E_{12}(rS),
\]
which proves \eqref{eq:append-S}.  Similarly,
\[
[E_{12}(r),E_{23}(C)]
=
E_{13}(rC),
\]
and hence
\[
[E_{13}(rC),E_{32}(C)]
=
E_{12}(rC^2)
=
E_{12}(rC),
\]
using \(C^2=C\).  This proves \eqref{eq:append-C}.

Starting from \(x=E_{12}(C)\), apply the operation
\[
u\mapsto [u,t]
\]
exactly \(i_1\) times, then apply
\[
u\mapsto [[u,y],z],
\]
then repeat with \(i_2,\ldots,i_m\).  Equations
\eqref{eq:append-S} and \eqref{eq:append-C} give
\eqref{eq:encoded-profile-word}.  The initial \(x\) contributes one
letter; the \(\ell\)-th stage contributes \(i_\ell\) copies of \(t\)
and one copy each of \(y,z\).  This gives
\eqref{eq:encoded-profile-length}.
\end{proof}

\begin{proof}[Proof of Theorem~\ref{thm:expogrowth}]
Define
\[
\LieL
=
\operatorname{Lie}\langle t,x,y,z\rangle
\le M_3(\AB)^{(-)}.
\]
The quotient homomorphism
\[
\pi\colon\AB\rightarrow \Bbbk[s]
\]
from Lemma~\ref{lem:locally-finite-profile-ideal} induces a Lie
homomorphism
\[
M_3(\pi)\colon
M_3(\AB)^{(-)}
\rightarrow
M_3(\Bbbk[s])^{(-)}.
\]
On the generators of \(\LieL\), it satisfies
\[
M_3(\pi)(t)=E_{22}(s),
\qquad
M_3(\pi)(x)=M_3(\pi)(y)=M_3(\pi)(z)=0.
\]
Thus the image of \(\LieL\) is the one-dimensional abelian Lie
algebra
\[
\Bbbk E_{22}(s).
\]

Define
\[
\LieK
=
\ker\left(M_3(\pi)|_{\LieL}\right).
\]
Since \(\ker\pi=\AJ\), we have $\LieK
=
\LieL\cap M_3(\AJ)$, so we have an exact sequence
\[
0\longrightarrow\LieK
\longrightarrow\LieL
\longrightarrow \Bbbk E_{22}(s)
\longrightarrow0.
\]
The sequence splits through the one-dimensional subalgebra \(\Bbbk t\);
in particular,
\[
\LieL=\LieK\rtimes \Bbbk t.
\]

We claim that \(\LieK\) is locally finite.  More generally,
\(M_3(\AJ)^{(-)}\) is locally finite.  Indeed, given finitely
many matrices in \(M_3(\AJ)\), collect their finitely many
entries.  Since \(\AJ\) is locally finite as an associative
algebra, those entries generate a finite-dimensional associative
subalgebra \(\mathscr U\subseteq\AJ\).  The given matrices
then lie in the finite-dimensional Lie algebra $M_3(\mathscr U)^{(-)}$.

It remains to prove that \(\LieL\) has exponential growth.  Put
\[
F=\Bbbk\{t,x,y,z\}.
\]
For \(n\geq0\), let \(W_n\) be the span of the elements
\(X_{\mathbf i}\) from Lemma~\ref{lem:matrix-unit-encoding}, where
$\mathbf i=(i_1,\ldots,i_m)$
ranges over all finite tuples satisfying
\[
\sum_{\ell=1}^m(i_\ell+1)\leq n.
\]
By \eqref{eq:encoded-profile-length},
\[
\text{Lie length}(X_{\mathbf i})=1+\sum_{\ell=1}^m(i_\ell+2)\leq 2n+1,
\]
because \(m\leq n\).  Consequently, $W_n$ lies in the ball $B_{2n+1}$ of radius $2n+1$ of $\LieL$.

Let
\[
\operatorname{pr}_0\colon
V\rightarrow e_0\A
\]
be the projection onto the zeroth summand.  For \(r\in\AB\),
write
\[
\operatorname{pr}_0(r(e_0))=e_0\theta(r),
\qquad
\theta(r)\in\A.
\]
This defines a linear map
$\theta\colon\AB\rightarrow\A$.
By Lemma~\ref{lem:reading-the-profile},
\[
\theta
\left(
C S^{i_1}C\cdots S^{i_m}C
\right)
=
b_{i_1}\cdots b_{i_m},
\]
so the linear map $E_{12}(r)\mapsto\theta(r)$
sends \(W_n\) onto a subspace containing \(P(n)\).  Hence
\[
\dim_\Bbbk W_n\geq\dim_\Bbbk P(n).
\]
We obtain
\[\dim_\Bbbk B_{2n+1}\ge\dim_\Bbbk P(n)\ge\exp\left(\frac{\log4}{20}\,n\right),
\]
so \(\LieL\) has exponential growth.
\end{proof}

%%%%%%%%%%%%%%%%%%%%%%%%%%%%%%%%%%%%%%%%%%%%%%%%%%%%%%%%%%%%%%%%
\section{Associated graded Hopf-module coalgebras}\label{ss:graded}

Let $H$ be a cocommutative Hopf algebra with augmentation ideal $\varpi=\ker\varepsilon$. Put $H_n=\varpi^n$. These spaces form a Hopf filtration: $H_iH_j\subseteq H_{i+j}$, $\Delta(H_n)\subseteq\sum_{i+j=n}H_i\otimes H_j$, and $S(H_n)\subseteq H_n$. Thus $\gr H=\bigoplus_{n\ge0}H_n/H_{n+1}$ is a graded Hopf algebra.

If $M$ is a left $H$-module coalgebra, put $M_n=H_nM$. Then $\Delta(M_n)\subseteq\sum_{i+j=n}M_i\otimes M_j$, so $\gr M=\bigoplus_{n\ge0}M_n/M_{n+1}$ is naturally a graded $\gr H$-module coalgebra. The filtration is \emph{separated} when $\bigcap_nM_n=0$.

\begin{lem}\label{lem:tensor-separated-quotient}
Let $V=V_0\supseteq V_1\supseteq\cdots$ be a descending filtration and put $V_\infty=\bigcap_nV_n$. Then
\[
 \bigcap_{n\ge0}\sum_{i+j=n}V_i\otimes V_j
 =V_\infty\otimes V+V\otimes V_\infty.
\]
\end{lem}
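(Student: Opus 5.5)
The inclusion $\supseteq$ is immediate. For every $n$ we have $V_\infty\otimes V\subseteq V_n\otimes V_0$ and $V\otimes V_\infty\subseteq V_0\otimes V_n$, and both terms occur in $\sum_{i+j=n}V_i\otimes V_j$. So the content is the reverse inclusion. I would prove it by reducing to the separated case and then running a finite-dimensionality argument.

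\emph{Reduction to the separated case.} Put $\overline V=V/V_\infty$ with the induced filtration $\overline V_i=V_i/V_\infty$. This filtration is separated, since $\bigcap_i\overline V_i=(\bigcap_iV_i)/V_\infty=0$. Over a field, the kernel of the surjection $\pi\otimes\pi\colon V\otimes V\to\overline V\otimes\overline V$ is exactly $V_\infty\otimes V+V\otimes V_\infty$. Moreover, $\pi\otimes\pi$ maps $\sum_{i+j=n}V_i\otimes V_j$ into $\sum_{i+j=n}\overline V_i\otimes\overline V_j$. Hence if $x$ lies in the intersection on the left, then $(\pi\otimes\pi)(x)$ lies in $\bigcap_n\sum_{i+j=n}\overline V_i\otimes\overline V_j$. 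It therefore suffices to show that this intersection is zero for a separated filtration.

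\emph{The separated case.} Let $x$ lie in the intersection. Being a finite sum of pure tensors, $x$ lies in $X\otimes Y$ for some finite-dimensional subspaces $X,Y\le\overline V$. The chain $X\cap\overline V_i$ is a descending chain of subspaces of the finite-dimensional space $X$, so it stabilizes. Its limit is $X\cap\bigcap_i\overline V_i=0$, so $X\cap\overline V_N=0$ for some $N$. Enlarging $N$ if necessary, we also have $Y\cap\overline V_N=0$.

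Now choose a complement of $X$ containing $\overline V_N$; this is possible because $X\cap\overline V_N=0$. Let $p\colon\overline V\to X$ be the corresponding projection, so $p$ is the identity on $X$ and vanishes on $\overline V_N$. Construct $q\colon\overline V\to Y$ in the same way. Then $(p\otimes q)(x)=x$. On the other hand, take $n=2N$. Every summand $\overline V_i\otimes\overline V_j$ with $i+j=2N$ has $i\ge N$ or $j\ge N$, so it is killed by $p\otimes q$. Since $x\in\sum_{i+j=2N}\overline V_i\otimes\overline V_j$, we get $(p\otimes q)(x)=0$. Hence $x=0$.

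\emph{Main obstacle.} The one delicate point is that intersecting with $X\otimes Y$ does not distribute over the sum $\sum_{i+j=n}\overline V_i\otimes\overline V_j$. Arguing term by term would therefore fail. The projections $p$ and $q$ circumvent this, because they kill every summand at once while fixing $x$. Everything else is linear algebra over a field.
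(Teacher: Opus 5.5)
Your proof is correct and follows the paper's argument. Both reduce to the separated case, place $x$ in $X\otimes Y$ with $X,Y$ finite-dimensional and meeting some $\overline V_N$ trivially, and show that for large $n$ every summand $\overline V_i\otimes\overline V_j$ with $i+j=n$ is killed. Your projections $p,q$ are simply the retraction form of the paper's observation that $X\otimes Y\to(\overline V/\overline V_r)\otimes(\overline V/\overline V_s)$ is injective; the paper uses separate thresholds and $n\ge r+s-1$, where you take a common $N$ and $n=2N$.
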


\begin{proof}
The inclusion from right to left is immediate. For the converse, quotient by $V_\infty$, so it is enough to assume $\bigcap_nV_n=0$. If $0\ne z\in V\otimes V$, choose finite-dimensional $P,Q\le V$ with $z\in P\otimes Q$. The descending chains $P\cap V_n$ and $Q\cap V_n$ eventually vanish, say $P\cap V_r=Q\cap V_s=0$. Hence $P\otimes Q\to(V/V_r)\otimes(V/V_s)$ is injective. If $n\ge r+s-1$, every summand $V_i\otimes V_j$ with $i+j=n$ maps to zero in this quotient, so $z$ cannot belong to $\sum_{i+j=n}V_i\otimes V_j$.
\end{proof}

\begin{lem}\label{lem:graded-separated}
Assume that the augmentation filtration on an amenable $H$-module coalgebra $M$ is separated. Then $\gr M$ is amenable over $\gr H$.
\end{lem}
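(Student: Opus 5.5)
By Theorem~\ref{thm:hopf}, applied to the graded Hopf algebra $\gr H$ (which is cocommutative because $H$ is) acting on the $\gr H$-module coalgebra $\gr M$, it suffices to prove that $\gr M$ is \emph{algebraically} amenable. So I would work with arbitrary finite-dimensional Følner subspaces and pass to their symbols. Before that, I would check briefly that $\gr M$ is a genuine nonzero counital $\gr H$-module coalgebra. The coproduct on $\gr M$ is induced from $\Delta(M_n)\subseteq\sum_{i+j=n}M_i\otimes M_j$. Lemma~\ref{lem:tensor-separated-quotient}, together with separatedness, guarantees that nothing is lost in passing to the graded pieces. Nonvanishing holds since $M_0/M_1\neq0$: otherwise $M=M_1=M_2=\cdots$, contradicting separatedness.

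\textbf{Setup.} Fix a finite-dimensional $F\le\gr H$ and $\epsilon>0. Enlarging $F$, I may assume it is spanned by homogeneous elements of degrees $\le k$. For each homogeneous basis element $f\in H_i/H_{i+1}$, choose a lift $\tilde f\in H_i$. Let $\widetilde F\le H$ be the span of these lifts. By amenability of $M$ (as an ordinary module, via Theorem~\ref{thm:hopf}), choose a finite-dimensional $0\ne E\le M$ with $\dim(E+\widetilde FE)<(1+\epsilon)\dim E$. Put $E^+=E+\widetilde FE$.

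\textbf{Symbols.} For a finite-dimensional $V\le M$, define
\[
\gr V=\bigoplus_n\frac{(V\cap M_n)+M_{n+1}}{M_{n+1}}\le\gr M .
\]
Its $n$-th piece is isomorphic to $(V\cap M_n)/(V\cap M_{n+1})$. Since the filtration is separated and $V$ is finite-dimensional, the chain $V\cap M_n$ reaches $0$ after finitely many steps. Hence $\dim\gr V=\dim V$. This is the only place where separatedness enters, and it is the main point: without it, the symbols of $E$ could be much smaller than $E$, while the boundary need not shrink proportionally.

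\textbf{Key inclusion.} I would next show that
\[
\gr E+F\cdot\gr E\subseteq\gr E^+ .
\]
Take $e\in E\cap M_n$ with symbol $\bar e$, and a homogeneous basis element $f$ of degree $i$ with lift $\tilde f$. In $\gr M$ we have $f\cdot\bar e=\tilde fe+M_{n+i+1}$, because $H_iM_n\subseteq M_{n+i}$. Moreover $\tilde fe\in E^+\cap M_{n+i}$, so this class lies in the degree-$(n+i)$ piece of $\gr E^+$. The inclusion $\gr E\le\gr E^+$ is clear since $E\le E^+$.

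\textbf{Conclusion.} Combining these,
\[
\dim(\gr E+F\gr E)\le\dim\gr E^+=\dim E^+<(1+\epsilon)\dim E=(1+\epsilon)\dim\gr E .
\]
Therefore $\gr M$ is algebraically amenable over $\gr H$. Theorem~\ref{thm:hopf}, in the strengthened form of Theorem~\ref{thm:module-coalgebra-rounding}, then upgrades $\gr E$ to a Følner subcoalgebra.

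I expect the only delicate points to be the bookkeeping of the dimension equality $\dim\gr V=\dim V$, and verifying the $\gr H$-module-coalgebra structure on $\gr M$, which is where Lemma~\ref{lem:tensor-separated-quotient} is used. The Følner estimate itself is routine.
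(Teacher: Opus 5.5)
Your proposal is correct and follows the paper's proof essentially step for step. You lift a homogeneous $\overline F$ degree by degree, take an ordinary Følner subspace $E$ of $M$, use separatedness to get $\dim\gr E=\dim E$ and $\dim\gr E^+=\dim E^+$, use $H_iM_n\subseteq M_{n+i}$ to get $\overline F\gr E\subseteq\gr E^+$, and then round with Theorem~\ref{thm:hopf}. One minor point: Lemma~\ref{lem:tensor-separated-quotient} is not needed to put a $\gr H$-module coalgebra structure on $\gr M$, since that structure exists for such filtrations regardless of separatedness; the paper instead uses that lemma in Theorem~\ref{thm:graded}, to reduce to the separated case.
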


\begin{proof}
Let $\overline F\le\gr H$ be finite-dimensional and $\epsilon>0$. Enlarging $\overline F$ if necessary, assume it is homogeneous, and choose degree by degree a finite-dimensional space $F\le H$ whose leading symbols span $\overline F$. By Theorem~\ref{thm:hopf}, choose a finite-dimensional $E\le M$ such that, for $E^+=E+FE$, one has $\dim E^+<(1+\epsilon)\dim E$.

Give $E$ and $E^+$ the filtrations induced from $M$. Since the filtration is separated and these spaces are finite-dimensional, $\dim\gr E=\dim E$ and $\dim\gr E^+=\dim E^+$. Filtered multiplication gives $\overline F\gr E\subseteq\gr E^+$, and therefore
\[
 \dim(\gr E+\overline F\gr E)
 \le\dim E^+<(1+\epsilon)\dim\gr E.
\]
Thus $\gr M$ is algebraically amenable; Theorem~\ref{thm:hopf} rounds the Følner subspace to a finite-dimensional subcoalgebra.
\end{proof}

\begin{thm}\label{thm:graded}
If $M$ is an amenable left $H$-module coalgebra, then $\gr M$ is an amenable $\gr H$-module coalgebra.
\end{thm}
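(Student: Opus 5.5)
We reduce to the separated case of Lemma~\ref{lem:graded-separated} by dividing out the intersection $M_\infty=\bigcap_nM_n$.

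\emph{Step 1: $\overline M=M/M_\infty$ is an $H$-module coalgebra quotient.} Each $M_n=H_nM$ is an $H$-submodule, because $H_n=\varpi^n$ is a left ideal. Hence $M_\infty$ is an $H$-submodule. Next, $\Delta(M_n)\subseteq\sum_{i+j=n}M_i\otimes M_j$ for every $n$. Lemma~\ref{lem:tensor-separated-quotient}, applied to the filtration $(M_n)$ of $V=M$, then gives
\[
\Delta(M_\infty)\subseteq M_\infty\otimes M+M\otimes M_\infty,
\]
so $M_\infty$ is a coideal provided $\varepsilon(M_\infty)=0$. This vanishing needs care. For $n\ge1$ we have $\varepsilon(hm)=\varepsilon(h)\varepsilon(m)=0$ for $h\in\varpi$, so $\varepsilon(M_n)=0$ and hence $\varepsilon(M_\infty)=0$. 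However, $\overline M$ must be nonzero for Theorem~\ref{thm:module-coalgebra-quotient} to apply. The counit is surjective on the nonzero coalgebra $M$ and vanishes on $M_1\supseteq M_\infty$, so $M_\infty\ne M$ and $\overline M\ne0$. Thus $q\colon M\twoheadrightarrow\overline M$ is a surjective $H$-linear counital coalgebra morphism, and Theorem~\ref{thm:module-coalgebra-quotient} shows that $\overline M$ is amenable.

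\emph{Step 2: the filtration of $\overline M$ is separated, and $\gr\overline M=\gr M$.} The filtration is $\overline M_n=H_n\overline M=q(M_n)=M_n/M_\infty$. Since $M_\infty\subseteq M_n$ for every $n$, we get $\bigcap_n\overline M_n=0$. Moreover $\overline M_n/\overline M_{n+1}\cong M_n/M_{n+1}$ canonically, and these isomorphisms are compatible with the $\gr H$-action and the graded coproduct, because $q$ is an $H$-linear coalgebra map that preserves filtrations. So $\gr M\cong\gr\overline M$ as graded $\gr H$-module coalgebras.

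\emph{Step 3: conclude.} Lemma~\ref{lem:graded-separated}, applied to the amenable module coalgebra $\overline M$ with its separated augmentation filtration, shows that $\gr\overline M=\gr M$ is amenable over $\gr H$.

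The main obstacle is Step 1: checking that $M_\infty$ is a genuine coideal and that the quotient is a nonzero counital module coalgebra, so that Theorem~\ref{thm:module-coalgebra-quotient} applies. Lemma~\ref{lem:tensor-separated-quotient} is designed for exactly this, and the counit computation above handles nonvanishing. Step 2 needs only a routine check that the graded structures agree.
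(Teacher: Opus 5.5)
Your proposal is correct and follows the paper's proof essentially step for step. You quotient by the $H$-stable coideal $M_\infty=\bigcap_nM_n$ (using Lemma~\ref{lem:tensor-separated-quotient} and $\varepsilon(M_1)=0$), obtain amenability of $M/M_\infty$ from Theorem~\ref{thm:module-coalgebra-quotient}, identify $\gr(M/M_\infty)\cong\gr M$, and conclude with Lemma~\ref{lem:graded-separated}. Your explicit check that $M/M_\infty\neq0$ (since amenability forces $M\neq0$, and $\varepsilon$ vanishes on $M_1\supseteq M_\infty$) is a small detail that the paper leaves implicit.
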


\begin{proof}
Put $M_\infty=\bigcap_nM_n$. Since each $M_n$ is an $H$-submodule, so is $M_\infty$. Moreover $M_\infty\subseteq M_1=\varpi M$, hence $\varepsilon_M(M_\infty)=0$. For $x\in M_\infty$, the coalgebra filtration gives
\[
 \Delta(x)\in\bigcap_n\sum_{i+j=n}M_i\otimes M_j
 =M_\infty\otimes M+M\otimes M_\infty
\]
by Lemma~\ref{lem:tensor-separated-quotient}. Thus $M_\infty$ is an $H$-stable coideal, and $M^{\mathrm{sep}}=M/M_\infty$ is an $H$-module coalgebra. Theorem~\ref{thm:module-coalgebra-quotient} shows that it is amenable.

Its augmentation filtration is $M_n^{\mathrm{sep}}=M_n/M_\infty$, and is separated. Moreover $(M_n/M_\infty)/(M_{n+1}/M_\infty)\cong M_n/M_{n+1}$ compatibly with the graded action and comultiplication, so $\gr M^{\mathrm{sep}}\cong\gr M$. Lemma~\ref{lem:graded-separated} now proves the theorem.
\end{proof}

\end{document}